\newcolumntype{P}[1]{>{\centering\arraybackslash}p{#1}}
\newtheorem{deff}{Definition}[section]
\newtheorem{lemma}[deff]{Lemma}
\newtheorem{thm}[deff]{Theorem}
\newtheorem{corollary}[deff]{Corollary}
\newtheorem{prop}[deff]{Proposition}
\newtheorem{em-example}[deff]{Example}
\newtheorem{em-def}[deff]{Definition}        
\newtheorem{em-remark}[deff]{Remark}         
\newtheorem{em-question}[deff]{Question}
\newtheorem{rem}[deff]{Remark}
\newtheorem{problem}[deff]{Problem}
\newcommand{\R}{\mathbb{R}}
\def\N{\mathbb N}
\def\R{\mathbb R}
\def\T{\mathbb T}
\def\Z{\mathbb Z}
\DeclareMathOperator*{\supp}{\rm supp}
\DeclareMathOperator{\res}{\restriction}
\def\cont{\mathfrak c}
\def\hull#1{\langle#1\rangle}
\def\rank{\mathop{\hbox{{\rm rank}}}\nolimits}
\begin{document}

\title[The Separable Quotient Problem for Topological Groups]{The Separable Quotient Problem for Topological Groups}
\author{Arkady G. Leiderman, Sidney A. Morris, and Mikhail G. Tkachenko}
\address{Department of Mathematics, Ben-Gurion University of the Negev, Beer Sheva, P.O.B. 653, Israel}
\email{arkady@math.bgu.ac.il}
\address{Faculty of Science, Federation University Australia, P.O.B. 663, Ballarat, Victoria, 3353,  Australia
\newline
Department of Mathematics and Statistics, La~Trobe University, Melbourne, Victoria, 3086, Australia}
\email{morris.sidney@gmail.com}
\address{Departamento de Matem\'aticas, Universidad Aut\'onoma Metropolitana, Av. San Rafael Atlixco 186, Col. Vicentina, Del. Iztapalapa, C.P. 09340, M\'exico, D.F., Mexico}
\email{mich@xanum.uam.mx}
\keywords{Topological group, quotient group, locally compact group, pro-Lie group, separable group, precompact group, pseudocompact group}
\subjclass[2010]{Primary 22A05, 54D65; Secondary 22D05, 46A03, 54B15}

\date{July 29, 2017}

\begin{abstract}
The famous Banach-Mazur problem,  which asks if  every infinite-dimensional Banach space has  an infinite-dimensional separable quotient Banach space, has remained  unsolved for 85 years, though it has been answered in the affirmative for reflexive Banach spaces and even Banach spaces which are duals. The analogous problem for locally convex spaces has been answered in the negative, but has been shown to be true for large classes of locally convex spaces including all non-normable Fr\'echet spaces. In this paper the analogous problem for topological groups is investigated. Indeed there are four natural analogues: Does every non-totally disconnected topological group have a  separable quotient group which is (i) non-trivial; (ii) infinite; (iii) metrizable; (iv) infinite metrizable. All four questions are answered here in the negative. However, positive answers are proved for important classes of topological groups including (a) all compact groups; (b) all locally compact abelian groups; (c) all $\sigma$-compact locally compact groups; (d) all abelian pro-Lie groups; (e) all $\sigma$-compact pro-Lie groups; (f) all pseudocompact groups. Negative answers are proved for precompact groups. 
\end{abstract}

\maketitle

\section{Introduction} 

\noindent 
It is natural to attempt to describe all objects of a certain kind in terms of basic building blocks of that kind. For example one may try to describe general Banach spaces in terms of separable Banach spaces. Recall that a  topological space is said to be \emph{separable} if it has a countable dense subset.

To put our investigation into context, we begin with a famous unsolved problem in Banach space theory. 
The Separable Quotient Problem for Banach Spaces has its roots in the 1930s and, according to a private communication from W\l adys\l aw  Orlicz to Jerzy K{\c{a}}kol, is due to Stefan Banach and 
Stanis\l aw Mazur.

\begin{problem}\label{SQProblem}{\textbf{Separable Quotient Problem for Banach Spaces.}}\quad Does every infinite-dimensional Banach space have a quotient Banach space which is  {separable} and infinite-dimensional?
\end{problem}

\smallskip
\noindent The related Quotient Schauder Basis Problem for Banach Spaces  is due to Aleksander (Olek) Pe\l czy\'nski \cite{Pelczynski}.

\begin{problem}{\textbf{Quotient Schauder Basis Problem for Banach Spaces.}\label{SQSchauder} \  Does every infinite-dimensional Banach space have a quotient Banach space which is infinite-dimensional and has  a Schauder basis?}
\end{problem}

\smallskip
\noindent Of course any Banach space with a Schauder basis is separable. Mazur's Problem 153 in The Scottish  Book \cite{Mauldin}, for which the prize was a live goose, was answered by Per Enflo \cite{Enflo} in 1973. Enflo proved that there exist separable Banach spaces which do not have a Schauder basis (and hence lack the approximation property).

\smallskip
However William Johnson and Haskell Rosenthal proved the following result:

\begin{thm} \label{JohnsonRosenthal}\textbf{\cite{JohnsonRosenthal}}
Every separable infinite-dimensional Banach space has a quotient infinite-dimensional Banach space with a Schauder basis. 
\end{thm}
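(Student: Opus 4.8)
The plan is to pass to the dual and solve the ``transposed'' problem of locating a suitable weak$^*$-closed subspace of $X^*$. Recall the standard correspondence: closed subspaces $Y\subseteq X$ are matched with weak$^*$-closed subspaces $E=Y^{\perp}\subseteq X^*$ via $Y={}^{\perp}E$, and under this matching $(X/Y)^*$ is isometric to $Y^{\perp}=E$. Consequently, to exhibit an infinite-dimensional quotient $X/Y$ with a Schauder basis it suffices to find an infinite-dimensional weak$^*$-closed subspace $E$ of $X^*$ that carries a basis whose coordinate functionals are weak$^*$-continuous, i.e.\ are given by elements of the predual; such a basis then dualizes to a Schauder basis of $X/Y$. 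Following Johnson and Rosenthal, I would make this precise through the notion of an \emph{$\omega^*$-basic sequence}: a norm-basic sequence $(f_n)$ in $X^*$ for which there exist vectors $x_n\in X$ with $f_i(x_j)=\delta_{ij}$, such that $(f_n)$ is in fact a basis of its \emph{weak$^*$-closed} span $E=\overline{\mathrm{span}}^{\,w^*}(f_n)$ with weak$^*$-to-weak$^*$ continuous partial-sum projections.

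The core of the argument is then the construction of an infinite $\omega^*$-basic sequence in $X^*$. Since $X$ is infinite-dimensional, $X^*$ is infinite-dimensional; and since $X$ is separable, the weak$^*$ topology on bounded subsets of $X^*$ is metrizable, which is what renders an inductive selection feasible. I would construct, by induction on $n$, normalized functionals $f_n\in X^*$ together with vectors $x_n\in X$ so that $f_i(x_j)=\delta_{ij}$ and the basic-sequence inequality $\|\sum_{i\le m}a_if_i\|\le K\|\sum_{i\le n}a_if_i\|$ holds for all $m\le n$ and all scalars, with a uniform constant $K$. Here $f_{n+1}$ is chosen inside the infinite-dimensional weak$^*$-closed subspace $\{x_1,\dots,x_n\}^{\perp}$ of $X^*$ by a weak$^*$ analogue of Mazur's extraction technique (a compactness argument on the unit sphere of $\mathrm{span}(f_1,\dots,f_n)$), while the biorthogonal vector $x_{n+1}$ is selected in $\bigcap_{i\le n}\ker f_i$ with $f_{n+1}(x_{n+1})=1$, its norm kept under control via the distance of $f_{n+1}$ from the span of its predecessors.

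Given such a sequence, I would set $E=\overline{\mathrm{span}}^{\,w^*}(f_n)$ and $Y={}^{\perp}E$; then $E$ is infinite-dimensional, so $(X/Y)^*=E$ forces $X/Y$ to be infinite-dimensional. The weak$^*$-continuous partial-sum projections on $E$ are adjoints of uniformly bounded finite-rank projections $P_n$ on $X/Y$, and one checks that $P_n\to I$ strongly; the cosets $x_n+Y$ then constitute a Schauder basis of $X/Y$ with coordinate functionals $(f_n)$. The step I expect to be the principal obstacle is maintaining, throughout the induction, the three requirements simultaneously: the biorthogonal system realized \emph{in} $X$, the uniform basic constant $K$, and—most delicately—the property that $(f_n)$ spans $E$ in the weak$^*$ sense, so that the partial sums converge to the identity on all of $E$ and not merely on the norm-closed span. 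It is exactly this weak$^*$ (rather than norm) basis property that distinguishes $\omega^*$-basic from ordinary basic sequences in a dual space, and securing it is where the weak$^*$-metrizability furnished by separability of $X$ must be exploited.
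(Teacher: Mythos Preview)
The paper does not prove this theorem at all: it is stated with a bare citation to Johnson and Rosenthal and then used only to derive Corollary~1.4. So there is no ``paper's own proof'' to compare against; your outline is in fact a sketch of the original Johnson--Rosenthal argument via $w^*$-basic sequences, which is precisely the source being cited.

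As a sketch of that argument your proposal is on the right track, but one point is stated imprecisely and would trip up a full write-up. You describe an $\omega^*$-basic sequence as a basis of its weak$^*$-closed span $E$ with weak$^*$-continuous partial-sum projections, and then worry about making $(f_n)$ ``span $E$ in the weak$^*$ sense.'' That is not quite how the machinery works: in general $(f_n)$ is only a basis of its \emph{norm}-closed span, and one does \emph{not} need norm-convergent expansions for every element of $E$. What Johnson and Rosenthal actually prove is that the biorthogonal cosets $(x_n+Y)$ form a Schauder basis of the \emph{predual} $X/Y$ of $E$, with the $(f_n)$ serving as coordinate functionals; the partial-sum operators live on $X/Y$, and it is their adjoints that act on $E$. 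The delicate step is therefore not a weak$^*$ spanning condition on $(f_n)$ but rather showing that the cosets $(x_n+Y)$ have dense linear span in $X/Y$ --- equivalently, that $Y=\bigcap_n\ker f_n$ coincides with ${}^{\perp}E$ --- and that the projections on $X/Y$ are uniformly bounded. Once you rephrase the target that way, the inductive construction you describe (Mazur-type selection inside $\{x_1,\dots,x_n\}^{\perp}$, exploiting weak$^*$ metrizability of bounded sets) does the job.
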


\begin{corollary}\label{SchauderBasis}
The Quotient Schauder Basis Problem for Banach Spaces \ref{SQSchauder} and the Separable Quotient Problem for Banach Spaces \ref{SQProblem} are equivalent.
\end{corollary}

Steve Saxon and Albert Wilansky proved some equivalent versions of the Separable Quotient Problem for Banach Spaces.

\begin{thm}\label{SaxonWilansky}\cite{SaxonWilansky}
The following are equivalent for an infinite-dimensional Banach space $B$:\begin{itemize}
\item[(i)] $B$ has a quotient Banach space which is separable and infinite-dimensional;
\item[(ii)] $B$ has a dense subspace which is not barrelled;
\item[(iii)] $B$ has a dense subspace $E$ which is the union of a strictly increasing sequence of closed linear subspaces.
\end{itemize}
\end{thm}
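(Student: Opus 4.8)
Throughout, let $E$ denote a dense linear subspace of $B$. Since $E$ is dense, restriction is an isometric isomorphism $B'\to E'$ of the continuous dual spaces, and a subset of $B'$ is equicontinuous on $E$ precisely when it is norm-bounded. The plan is to prove the cycle (i)~$\Rightarrow$~(iii)~$\Rightarrow$~(ii)~$\Rightarrow$~(i), using the dual description of barrelledness: $E$ is barrelled if and only if every $\sigma(B',E)$-bounded subset of $B'$ is norm-bounded.

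For (i)~$\Rightarrow$~(iii), let $q\colon B\to Y$ be a quotient map onto a separable infinite-dimensional Banach space $Y$. Choosing a linearly independent sequence $(y_n)$ in $Y$ with dense linear span and setting $E_n=\operatorname{span}\{y_1,\dots,y_n\}$ produces a strictly increasing chain of finite-dimensional, hence closed, subspaces with dense union. I would then pull these back by putting $G_n=q^{-1}(E_n)$: continuity of $q$ makes each $G_n$ closed, surjectivity makes the chain strictly increasing, and a routine quotient-norm approximation shows that $E:=\bigcup_n G_n=q^{-1}\bigl(\bigcup_n E_n\bigr)$ is dense in $B$. This is exactly the configuration demanded by~(iii). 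For (iii)~$\Rightarrow$~(ii), suppose $E=\bigcup_n G_n$ is dense with $G_1\subsetneq G_2\subsetneq\cdots$ closed in $B$. Each $G_n$ is a proper closed subspace of $E$, so Hahn--Banach supplies $\phi_n\in E'=B'$ with $\phi_n|_{G_n}=0$ and $\|\phi_n\|=n$. I would then verify that $D=\{x\in E:\ |\phi_n(x)|\le 1\ \text{for all }n\}$ is a barrel which is not a neighbourhood of $0$: it is visibly closed, convex and balanced; it is absorbing because any $x\in E$ lies in some $G_m$, whence $\phi_n(x)=0$ for all $n\ge m$ and only finitely many constraints are active, so $tx\in D$ for small $t>0$; and it is not a neighbourhood of $0$, since $\{\|x\|<\delta\}\subseteq D$ would force $\|\phi_n\|\le 1/\delta$ for every $n$, contradicting $\|\phi_n\|=n$. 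Hence $E$ is not barrelled.

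The substance of the theorem is (ii)~$\Rightarrow$~(i). Starting from a non-barrelled dense subspace $E$, the dual criterion yields a sequence $(f_n)\subseteq B'$ that is $\sigma(B',E)$-bounded but with $\|f_n\|\to\infty$; normalising, $g_n=f_n/\|f_n\|$ satisfies $\|g_n\|=1$ and $g_n(x)\to 0$ for every $x\in E$. Since $(g_n)$ is bounded and converges pointwise to $0$ on the dense set $E$, it is weak${}^{*}$-null on all of $B$, so $Sx:=(g_n(x))_n$ defines a norm-one operator $S\colon B\to c_0$ with separable range. A short argument shows this range is infinite-dimensional: a finite-dimensional range would confine $(g_n)$ to a finite-dimensional subspace of $B'$, where weak${}^{*}$ convergence is norm convergence, contradicting $\|g_n\|=1$.

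The remaining, and genuinely hard, step is to convert $S$ into an honest separable infinite-dimensional quotient of $B$, that is, to produce a closed subspace $F$ with $B/F$ separable and infinite-dimensional. Here the plan is to pass, via the Bessaga--Pe\l czy\'nski selection principle, to a subsequence of $(g_n)$ that is a weak${}^{*}$-null normalised basic sequence, to construct by a gliding-hump argument a biorthogonal sequence $(x_n)\subseteq B$ with $g_i(x_n)=\delta_{in}$, and to use basicness and biorthogonality to promote $S$ (restricted to this subsequence) to a genuine quotient map onto a Banach space carrying a Schauder basis. I expect this last point to be the main obstacle: the naive candidate $F=\bigcap_n\ker g_n$ cannot be handled by pointwise information alone, because the induced injection $B/F\to c_0$ need not be bounded below, so one must exploit the basic-sequence structure rather than mere weak${}^{*}$-nullity. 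Once such a basis-quotient is obtained, separability is automatic and the cycle closes.
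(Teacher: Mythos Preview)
The paper does not give its own proof of this theorem: it is quoted verbatim from \cite{SaxonWilansky} as background for the Separable Quotient Problem, with no argument supplied in the text. There is therefore no ``paper's proof'' against which to compare your proposal.

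On its own merits, your cycle (i)\,$\Rightarrow$\,(iii)\,$\Rightarrow$\,(ii) is correct and standard; the pull-back of the finite-dimensional chain through the quotient map, and the barrel built from the Hahn--Banach functionals $\phi_n$, both work as you describe. For (ii)\,$\Rightarrow$\,(i) you have correctly located the genuine content and have been honest that your argument is unfinished. The passage from a weak$^*$-null normalised sequence $(g_n)\subset B'$ to an actual infinite-dimensional separable \emph{quotient} of $B$ is exactly the Johnson--Rosenthal step (cf.\ Theorem~\ref{JohnsonRosenthal} in the paper): one extracts a weak$^*$-basic subsequence, produces a bounded biorthogonal system, and uses the basic-sequence constants to show the associated operator is open onto its (closed, basis-carrying) range. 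As written, your proposal stops at the plan for this step; the missing ingredient is precisely the lower estimate showing surjectivity onto a closed subspace, which the naive kernel $\bigcap_n\ker g_n$ alone does not provide, as you yourself note.
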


Some extensions of this result to topological vector spaces are {obtained} in \cite{KakolSliwa}. 
We have then that Problem \ref{SQProblem} is equivalent to each of Problem \ref{UnionDenseLinearSubspaces} and Problem \ref{barrelled}

\begin{problem}\label{UnionDenseLinearSubspaces} Does every infinite-dimensional Banach space have a dense subspace $E$ which is the union of a strictly increasing sequence of closed linear subspaces?
\end{problem}

\begin{problem}\label{barrelled} Does every infinite-dimensional Banach space have a dense infinite-dimensional subspace which is not barrelled? 
\end{problem}

As a corollary of Theorem~\ref{SaxonWilansky}  (see Corollary~3.5 of \cite{Mujica}) one {obtains} the result first proved by Dan Amir and Joram Lindenstrauss. 

\begin{corollary}\label{AmirLindenstrauss} \cite{AmirLindenstrauss}
 Every infinite-dimensional weakly compactly generated (WCG) Banach space has a separable infinite-dimensional quotient Banach space. 
 \end{corollary}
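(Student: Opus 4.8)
The plan is to verify criterion (ii) of Theorem~\ref{SaxonWilansky} — that the space has a dense subspace which is not barrelled — and then invoke the implication (ii)$\Rightarrow$(i). Write $X:=B$ and recall that $X$ being WCG means there is a weakly compact set $K\subseteq X$ with $\overline{\operatorname{span}}\,K=X$. First I would normalise $K$: replacing it by the closed absolutely convex hull of $K\cup(-K)$, which remains weakly compact by the Krein--\v{S}mulian theorem and whose span is no smaller, I may assume $K$ is weakly compact, absolutely convex and symmetric. I would then split into two cases according to whether or not $X$ is reflexive.

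Main case: $X$ is not reflexive. Put $E:=\operatorname{span}K=\bigcup_{n\ge 1}nK$, a dense linear subspace of $X$. The set $K$ is absolutely convex and, since $E$ is its linear span, absorbing in $E$; moreover $K$ is weakly compact, hence weakly closed, hence norm-closed in $X$, so $K$ is closed in $E$. Thus $K$ is a barrel in $E$ (with the subspace topology). I claim $K$ is not a neighbourhood of $0$ in $E$. Indeed, if it were, there would be $\varepsilon>0$ with $\{y\in E:\|y\|<\varepsilon\}\subseteq K$; taking norm-closures in $X$ and using that $E$ is dense, the left-hand side is dense in the open ball of radius $\varepsilon$, so its closure contains $\{x\in X:\|x\|\le\varepsilon\}$, while $K$ is itself norm-closed. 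Hence $\varepsilon B_X\subseteq K$, forcing $B_X$ to be weakly compact and $X$ reflexive, a contradiction. Therefore $E$ carries a barrel that is not a neighbourhood of $0$, so $E$ is a dense non-barrelled subspace of $X$, and Theorem~\ref{SaxonWilansky} yields the desired separable infinite-dimensional quotient.

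Reflexive case: $X$ is reflexive. Here I would argue directly. Since $\dim X=\infty$, the dual $X^{*}$ is infinite-dimensional, so it contains an infinite-dimensional separable subspace $S$. Let $N:={}^{\perp}S$ be its pre-annihilator and form the quotient $X/N$. Using $(X/N)^{*}\cong N^{\perp}=\overline{S}^{\,w^{*}}$ together with the fact that in a reflexive space the weak$^{*}$ and norm closures of a subspace coincide, I obtain $(X/N)^{*}=\overline{S}$, which is separable and infinite-dimensional. A Banach space with separable dual is separable, so $X/N$ is the required separable infinite-dimensional quotient.

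The main obstacle is the heart of the first case: showing that the barrel $K$ fails to be a neighbourhood of $0$ unless $X$ is reflexive. This is precisely the closure computation above, and its force comes from $K$ being weakly (hence norm) \emph{closed} and bounded, so that its containing any norm-ball would trap the whole unit ball inside a weakly compact set. It is exactly here that the weak compactness of the generating set is indispensable; by a Baire-category argument the subspace $E$ is moreover automatically proper in the non-reflexive case, which is consistent with $K$ being a non-trivial barrel, but this need not be invoked separately. Everything else reduces to routine verifications once Theorem~\ref{SaxonWilansky} is in hand.
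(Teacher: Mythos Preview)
Your argument is correct and follows exactly the route the paper indicates: the paper does not give a detailed proof but simply states that the result is a corollary of Theorem~\ref{SaxonWilansky} (citing \cite{Mujica}), and you carry this out by verifying criterion~(ii) via the barrel $K$ in $E=\operatorname{span}K$, with the reflexive case handled separately. The split into the non-reflexive and reflexive cases is natural, since your barrel argument collapses precisely when $X$ is reflexive, and your direct duality argument in that case is standard and sound.
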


 As reflexive Banach spaces (and separable Banach spaces) are WCG, one {obtains}:
 
 \begin{corollary}\label{reflexiveBanach} 
\cite{Pelczynski}
Every infinite-dimensional reflexive Banach space has a separable infinite-dimensional quotient Banach space. 
 \end{corollary}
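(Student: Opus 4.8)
The plan is to obtain this statement as an immediate consequence of Corollary~\ref{AmirLindenstrauss}, so that the entire task reduces to checking that every reflexive Banach space is weakly compactly generated. Recall that a Banach space $X$ is WCG precisely when it contains a weakly compact subset $K$ whose closed linear span equals $X$. Thus the goal is to exhibit such a $K$ for an arbitrary infinite-dimensional reflexive space.

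The natural candidate is the closed unit ball itself. First I would invoke the classical characterization of reflexivity due to Kakutani: a Banach space $X$ is reflexive if and only if its closed unit ball $B_X$ is weakly compact. Hence, for an infinite-dimensional reflexive $X$, the set $K = B_X$ is weakly compact, and since $\bigcup_{n\in\N} n\,B_X = X$ its linear span is already all of $X$, so a fortiori its closed linear span is $X$. Therefore $X$ is WCG and infinite-dimensional, and Corollary~\ref{AmirLindenstrauss} applies directly to produce a separable infinite-dimensional quotient Banach space.

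Since the substantive content --- the actual construction of a separable infinite-dimensional quotient --- is carried entirely by the Amir--Lindenstrauss result we are permitted to assume, there is no genuine obstacle at this stage: the only extra ingredient is the standard fact that the closed unit ball of a reflexive space is weakly compact. The sole point meriting a moment's care is the verification that $B_X$ generates $X$ in the WCG sense, but this is immediate from $\bigcup_{n\in\N} n\,B_X = X$. I would note in passing that the companion remark in the text, that separable Banach spaces are likewise WCG, is obtained analogously by taking $K$ to be the union of a norm-null sequence whose span is dense, together with the origin; however, this is not required for the present corollary.
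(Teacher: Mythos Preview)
Your proposal is correct and follows exactly the route the paper takes: the paper simply notes that reflexive Banach spaces are WCG and deduces the corollary from Corollary~\ref{AmirLindenstrauss}. Your argument merely supplies the standard justification (via weak compactness of the unit ball) for the fact the paper states without proof.
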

 
 From Corollary~\ref{reflexiveBanach} one easily {obtains}:
 
 \begin{corollary}\label{separablesubspaceofdual} Let $B$ be a Banach space such that the dual Banach space $B^*$ has an infinite-dimensional reflexive subspace $E$, then $B$ has a quotient Banach space isomorphic to $E^*$. So $B$ has an infinite-dimensional separable quotient Banach space.
 \end{corollary}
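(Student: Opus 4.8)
The plan is to construct an explicit quotient map from $B$ onto $E^*$ and then feed the result into Corollary~\ref{reflexiveBanach}. Let $i\colon E\hookrightarrow B^*$ denote the isometric inclusion of the (automatically closed) subspace $E$, let $\kappa_B\colon B\to B^{**}$ be the canonical embedding, and set $S=i^*\circ\kappa_B\colon B\to E^*$, so that explicitly $S(x)(f)=f(x)$ for every $x\in B$ and $f\in E$. This map is linear with $\|S\|\le 1$, and its kernel is the pre-annihilator $\{x\in B: f(x)=0\ \text{for all}\ f\in E\}$. Everything reduces to showing that $S$ is onto: once surjectivity is established, the open mapping theorem yields a Banach-space isomorphism between $B/\ker S$ and $E^*$, which is the asserted quotient.

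To prove surjectivity I would invoke the duality principle that a bounded operator between Banach spaces is surjective exactly when its adjoint is bounded below, and therefore compute $S^*\colon E^{**}\to B^*$. For $\Phi\in E^{**}$, reflexivity of $E$ lets me write $\Phi=J_E(g)$ for a unique $g\in E$, where $J_E\colon E\to E^{**}$ is the canonical isomorphism; then for all $x\in B$ one checks that $\langle S^*\Phi,x\rangle=\Phi(Sx)=(Sx)(g)=g(x)$, so that $S^*\Phi=i(g)$ and hence $S^*=i\circ J_E^{-1}$. Since $J_E$ is an isometric isomorphism and $i$ is an isometric embedding, $S^*$ is an isometric embedding, in particular bounded below, and therefore $S$ is onto. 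This identification of $S^*$ with the inclusion $i$ is the heart of the matter, and it is precisely where the reflexivity hypothesis is used: reflexivity is exactly what allows every element of $E^{**}$ to be recognized as evaluation at a vector of $E\subseteq B^*$, so that the two roles played by elements of $E$ (as functionals on $B$ through $i$, and as vectors exhausting $E^{**}$ through $J_E$) line up. I expect keeping these two roles straight, and hence pinning down $S^*$ cleanly, to be the main obstacle; the reflexivity enters here and essentially nowhere else.

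For the final assertion I would note that since $E$ is infinite-dimensional and reflexive, so is its dual $E^*$. Corollary~\ref{reflexiveBanach} then supplies an infinite-dimensional separable quotient $Q$ of $E^*$, and composing the two quotient maps $B\twoheadrightarrow E^*\twoheadrightarrow Q$ exhibits $Q$ as an infinite-dimensional separable quotient Banach space of $B$, completing the proof.
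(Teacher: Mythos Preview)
Your argument is correct and is the standard way to prove this fact. The paper does not actually supply a proof of this corollary; it simply asserts that the statement ``easily obtains'' from Corollary~\ref{reflexiveBanach}, so there is nothing to compare against beyond noting that your construction of the quotient map $S=i^*\circ\kappa_B$ and the use of the duality criterion for surjectivity are exactly the details one would expect to fill in.
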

 
 Spiros Argyros, Pandelis Dodos and Vassilis Kanellopoulos  in 2008 generalized  Corollary~\ref{reflexiveBanach}.
 
 \begin{thm}\cite{Argyros1}
\label{Argyros1} If $B$  is the Banach dual of any infinite-dimensional Banach space
 then $B$ has a separable infinite-dimensional quotient Banach space. 
 \end{thm}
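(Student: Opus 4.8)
The plan is to write $B = X^*$ for an infinite-dimensional Banach space $X$ and to produce the required quotient through a structural dichotomy on $X$, feeding each case into the duality machinery already available. The trivial reduction is that if $B = X^*$ is itself separable, then it is its own infinite-dimensional separable quotient (it is infinite-dimensional because $X$ is), so throughout one may assume that $X^*$ is non-separable.

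The first substantial case is when $X$ contains an infinite-dimensional reflexive subspace. Passing to the closed linear span of a basic sequence inside it, one obtains an infinite-dimensional separable reflexive subspace $S \subseteq X$, since a closed subspace of a reflexive space is reflexive. The restriction map $X^* \to S^*$ is surjective by the Hahn--Banach theorem, so $X^*/S^{\perp} \cong S^*$; as $S$ is separable and reflexive, $S^*$ is an infinite-dimensional separable quotient of $B$, exactly as in the reasoning behind Corollary~\ref{separablesubspaceofdual}. Dually, the same conclusion follows whenever the bidual $X^{**}$ contains an infinite-dimensional reflexive subspace $E$: Corollary~\ref{separablesubspaceofdual}, applied with $X^*$ in the role of $B$ so that $B^* = X^{**}$, yields a quotient of $B$ isomorphic to $E^*$, which after replacing $E$ by a separable reflexive infinite-dimensional subspace of itself is separable.

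A second clean case is when $X$ contains an isomorphic copy of $c_0$. Then restriction gives a surjection $X^* \to c_0^* \cong \ell^1$, so $\ell^1$ is a separable infinite-dimensional quotient of $B$ and we are done. Combining these observations, it remains only to treat spaces $X$ with non-separable dual that contain neither an infinite-dimensional reflexive subspace nor a copy of $c_0$.

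This last case is where the genuine difficulty lies, and where I expect the main obstacle. Here $X$ is hereditarily non-reflexive, and the soft ``locate a reflexive subspace of $X$ or of $X^{**}$'' strategy is unavailable: there exist such spaces (for instance $\ell^1$-saturated spaces and exotic $\mathcal{L}_\infty$-constructions) whose reflexive structure, if any, cannot be extracted by elementary means. To build a separable quotient one must descend into the fine structure theory of Argyros, Dodos and Kanellopoulos: one analyses the spreading models generated by normalized weakly-Cauchy or weakly-null sequences in $X$ (invoking Rosenthal's $\ell^1$-theorem to produce such sequences), and uses Ramsey-theoretic and descriptive set-theoretic selection to construct inside $X^*$ a block sequence whose closed span has separable annihilator quotient. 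The crux is precisely this construction in the absence of any reflexive or $c_0$ substructure; once it is in hand, the cases together establish that every infinite-dimensional dual Banach space has an infinite-dimensional separable quotient, which is the assertion of Theorem~\ref{Argyros1}.
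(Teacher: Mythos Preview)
The paper does not prove this theorem at all: it is stated with the citation \cite{Argyros1} and used as a known result from the literature, so there is no ``paper's own proof'' to compare against. Your proposal is therefore being measured against the actual Argyros--Dodos--Kanellopoulos argument, not against anything in this article.

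Your reductions are sound. The separable-dual case is trivial; the case where $X$ (or $X^{**}$) contains an infinite-dimensional reflexive subspace is exactly the Hahn--Banach restriction argument behind Corollary~\ref{separablesubspaceofdual}; and the $c_0$ case gives $\ell^1$ as a quotient of $X^*$ directly. These are all correct and standard.

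However, your proposal is not a proof but an outline that explicitly defers the only non-trivial case. You write that the hereditarily non-reflexive, $c_0$-free situation ``is where the genuine difficulty lies'' and that one must ``descend into the fine structure theory of Argyros, Dodos and Kanellopoulos,'' then describe that descent only in broad strokes (spreading models, Ramsey-theoretic selection, block sequences in $X^*$). This is an honest acknowledgement, but it means the proposal has a gap precisely at the point that constitutes the theorem's content: the easy cases you dispatch were already covered by Corollaries~\ref{reflexiveBanach} and~\ref{separablesubspaceofdual}, and the whole force of the Argyros--Dodos--Kanellopoulos result is the residual case you leave open. As written, then, the proposal correctly locates where the work is but does not do it.
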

 
  In the literature many special cases of the Separable Quotient Problem for Banach Spaces have been proved, however the general problem remains unsolved.

\smallskip
Turning to locally convex spaces one can state the analogous problem. Throughout this paper all locally convex spaces will be assumed to be Hausdorff.

\begin{problem}\label{SQlcs}\textbf{Separable Quotient Problem for Locally Convex Spaces.}\quad Does every infinite-dimensional locally convex space have a quotient locally convex  space which is  \emph{separable} and infinite-dimensional?
\end{problem}

\smallskip
 This question was answered in the positive for a wide class of locally convex spaces by M. Eidelheit \cite{Eidelheit}. (See also Chapter~6, Section~31.4 of \cite{Koethe}.)

\begin{thm}\label{SQFrechet}\cite{Eidelheit} Every infinite-dimensional Fr\'echet space (= locally convex space with its topology  determined by a complete translation invariant metric) which is non-normable has the separable metrizable topological vector space $\R^{\omega}$ as a quotient space.
\end{thm}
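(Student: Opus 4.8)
The plan is to manufacture a continuous linear surjection $T\colon E\to\R^{\omega}$ of the given non-normable Fréchet space $E$ onto the product $\R^{\omega}=\prod_{n}\R$, and then invoke the Open Mapping Theorem: since $E$ and $\R^{\omega}$ are both Fréchet spaces, any continuous linear surjection between them is automatically open, so $T$ becomes a quotient map and $\R^{\omega}\cong E/\ker T$ is a quotient of $E$ (as a topological vector space, hence also as a topological group). The map will have the coordinate form $Tx=(f_n(x))_n$ for a carefully chosen sequence $(f_n)$ of continuous linear functionals; continuity of $T$ into the product topology is then free, and the entire difficulty is to arrange that $T$ is onto.

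First I would fix an increasing sequence of seminorms $\|\cdot\|_1\le\|\cdot\|_2\le\cdots$ generating the topology of $E$. By Kolmogorov's normability criterion, non-normability means that no single $\|\cdot\|_n$ generates the topology; consequently, for each $n$ there is a later seminorm that is dominated by no multiple of $\|\cdot\|_n$ (otherwise $\|\cdot\|_n$ alone would generate the topology). Passing to a cofinal subsequence and relabelling, I may therefore assume that consecutive seminorms are genuinely inequivalent, i.e. there is no constant $C$ with $\|\cdot\|_{n+1}\le C\|\cdot\|_n$ on $E$, for every $n$.

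Next I would build the functionals $f_n\in E'$ by induction, together with the closed finite-codimension subspaces $K_n=\bigcap_{j\le n}\ker f_j$, securing at every stage the decisive property that the restriction of $f_{n+1}$ to $K_n$ is not bounded by any multiple of $\|\cdot\|_n$; equivalently, $\inf\{\|v\|_n: v\in K_n,\ f_{n+1}(v)=1\}=0$. To produce such an $f_{n+1}$, note that deleting finitely many dimensions cannot restore equivalence, so $\|\cdot\|_n$ and $\|\cdot\|_{n+1}$ remain inequivalent on $K_n$; a duality argument on the Banach space obtained by completing $(K_n,\|\cdot\|_{n+1})$ then yields a $\|\cdot\|_{n+1}$-bounded functional on $K_n$ that fails to be $\|\cdot\|_n$-bounded, which I extend to $f_{n+1}\in E'$ by Hahn--Banach while keeping the bound $|f_{n+1}|\le\|\cdot\|_{n+1}$. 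In particular $f_{n+1}$ does not vanish on $K_n$, so the codimensions increase by one at each step and the $f_n$ are linearly independent. I expect this inductive construction to be the main obstacle, and it is exactly the point where non-normability is indispensable: a naive biorthogonal system with $\|e_n\|_{n-1}\le 2^{-n}$ followed by a direct sum $\sum b_n e_n$ breaks down, since for arbitrary $b\in\R^{\omega}$ the coefficients may outgrow any fixed summable control.

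Finally I would establish surjectivity by successive approximation, using the freedom just built in. Given $b=(b_n)\in\R^{\omega}$, I construct $x_N$ solving the first $N$ equations $f_n(x_N)=b_n$ for $n\le N$, starting from $x_1$ with $f_1(x_1)=b_1$ and passing from $x_N$ to $x_{N+1}=x_N+t_N v_N$, where $t_N=b_{N+1}-f_{N+1}(x_N)$ and $v_N\in K_N$ satisfies $f_{N+1}(v_N)=1$. The crucial unboundedness of $f_{N+1}$ on $K_N$ lets me pick $v_N$ with $\|v_N\|_N\le 2^{-N}/(1+|t_N|)$ after $t_N$ is already known, so that $\|x_{N+1}-x_N\|_N\le 2^{-N}$; since the seminorms increase, $(x_N)$ is Cauchy in every $\|\cdot\|_k$, and completeness of $E$ yields $x=\lim_N x_N$. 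Because each increment lies in $K_N\subseteq\ker f_n$ for all $n\le N$, the value $f_n(x_N)=b_n$ is preserved for every later $N$, and continuity of $f_n$ gives $f_n(x)=b_n$; hence $Tx=b$. This produces the required surjection $T\colon E\to\R^{\omega}$, and the Open Mapping Theorem completes the argument.
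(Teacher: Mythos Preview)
The paper does not supply its own proof of this statement: Theorem~\ref{SQFrechet} is stated with a citation to Eidelheit's 1936 paper (and the paper also points the reader to \cite[Chapter~6, Section~31.4]{Koethe}), with no argument given in the text. So there is no in-paper proof to compare against; your proposal is in fact a sketch of the classical Eidelheit argument to which the paper is referring.

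The sketch is correct. The one step that deserves an extra word is the assertion that ``deleting finitely many dimensions cannot restore equivalence''. It holds in your setup because each $f_j$ with $j\le n$ satisfies $|f_j|\le\|\cdot\|_j\le\|\cdot\|_n$, so the projection of $E$ onto any algebraic complement of $K_n$ is already $\|\cdot\|_n$-continuous; a short triangle-inequality computation then shows that domination of $\|\cdot\|_{n+1}$ by $\|\cdot\|_n$ on $K_n$ would propagate to all of $E$, contradicting the choice of seminorms. The existence of the desired $f_{n+1}$ then follows, as you indicate, from a uniform-boundedness argument in the $\|\cdot\|_{n+1}$-completion of $K_n$: the $\|\cdot\|_n$-unit ball there is norm-unbounded, hence not weakly bounded, so some $\|\cdot\|_{n+1}$-continuous functional on $K_n$ fails to be $\|\cdot\|_n$-bounded; Hahn--Banach extends it to $E$ with $|f_{n+1}|\le\|\cdot\|_{n+1}$. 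The successive-approximation construction of a preimage and the final appeal to the Open Mapping Theorem between Fr\'echet spaces are exactly right.
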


\begin{deff}\cite{Robertson} 
{\rm A topological vector space is said to be \emph{properly separable} if it has a proper dense vector subspace of countably infinite (Hamel) dimension.}
\end{deff}

Wendy Robertson \cite{Robertson} observed that a Fr\'echet space is properly separable if and only if it is separable and that a metrizable barrelled locally convex space is properly separable if and only if it is separable. The following result  generalizes Theorem~\ref{SQFrechet}. For further generalizations along this line, see \cite{SaxonN}.

\begin{thm}\label{strict(LF)}\cite{Robertson} Every strict inductive limit of a strictly increasing sequence $(E_m)$ of  Fr\'echet spaces with at least one $E_m$ non-normable has a properly separable quotient locally convex space.
\end{thm}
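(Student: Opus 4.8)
The plan is to exhibit the concrete space $\R^{\omega}$ as a quotient of $E$. First I record why this suffices: the subspace $\R^{(\omega)}$ of finitely supported sequences is a proper vector subspace of $\R^{\omega}$, it is dense in the product topology, and it has a countably infinite Hamel basis $\{e_n : n\in\omega\}$; hence $\R^{\omega}$ is itself properly separable, and any quotient of $E$ isomorphic to it will establish the theorem.

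To build the quotient map I would first exploit strictness. In a strict inductive limit each $E_m$ is a closed vector subspace of $E$ and carries exactly the subspace topology inherited from $E$; in particular the non-normable member $E_{m_0}$ sits inside $E$ as a genuine topological subspace. Applying Theorem~\ref{SQFrechet} to $E_{m_0}$ yields a quotient map $q_0 \colon E_{m_0} \to \R^{\omega}$, which I write coordinatewise as $q_0 = (f_n)_{n\in\omega}$ with each $f_n$ a continuous linear functional on $E_{m_0}$. Since $E_{m_0}$ carries the induced topology, each $f_n$ is continuous for that topology, so by the Hahn--Banach extension theorem for locally convex spaces it extends to a continuous linear functional $\tilde f_n$ on $E$. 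Assembling these gives a continuous linear map $\tilde q = (\tilde f_n)_{n\in\omega} \colon E \to \R^{\omega}$ (continuous because $\R^{\omega}$ carries the product topology and each coordinate is continuous), and $\tilde q$ is surjective because its restriction to $E_{m_0}$ is the surjection $q_0$.

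The main obstacle is to upgrade $\tilde q$ from a continuous surjection to an open map, i.e.\ to a quotient map, for then $E/\ker\tilde q \cong \R^{\omega}$ and we are done. Continuity and surjectivity came for free, but openness is exactly the point where the topology of $E$ must be used, since a continuous linear surjection need not be open in general. Here I would invoke De Wilde's open mapping theorem: the strict inductive limit $E$ is webbed, being a countable inductive limit of the webbed Fr\'echet spaces $E_m$, while $\R^{\omega}$, being a Fr\'echet space, is ultrabornological (and in particular barrelled); hence every continuous linear surjection $E \to \R^{\omega}$ is open. Granting this, $\tilde q$ is a quotient map onto the properly separable space $\R^{\omega}$. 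A self-contained alternative, avoiding the open mapping machinery, would be to return to the proof of Theorem~\ref{SQFrechet} and choose the extensions $\tilde f_n$ so that the quotient seminorms they induce on $E$ remain mutually inequivalent, thereby verifying openness directly; I expect this bookkeeping, rather than any conceptual difficulty, to be the technical heart of the argument.
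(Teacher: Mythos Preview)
The paper does not supply a proof of this theorem: it is quoted from Robertson \cite{Robertson} as part of the introductory survey and is stated without argument, so there is nothing in the present paper to compare your proposal against.

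For what it is worth, your argument is essentially correct. The reduction to exhibiting $\R^{\omega}$ as a quotient, the use of strictness to view the non-normable $E_{m_0}$ as a closed subspace of $E$ carrying the induced topology, the coordinatewise Hahn--Banach extension of the Eidelheit quotient map from Theorem~\ref{SQFrechet}, and the appeal to De Wilde's open mapping theorem (a strict $(LF)$-space is webbed, and the Fr\'echet space $\R^{\omega}$ is ultrabornological) are all valid and assemble into a complete proof. The only cosmetic point is that you do not need the full De~Wilde machinery: a continuous linear surjection from an $(LF)$-space onto a Fr\'echet space is open already by Grothendieck's version of the open mapping theorem, which is the setting Robertson works in.
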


Jerzy K{\c{a}}kol, Steve Saxon and Aaron Todd \cite{KakolSaxonTodd} answered Problem~\ref{SQlcs} in the negative.

\begin{thm}\label{barrelledlcs}\cite{KakolSaxonTodd} There exist infinite-dimensional barrelled locally convex spaces which do not have any infinite-dimensional separable quotient locally convex spaces.
\end{thm}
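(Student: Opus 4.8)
The plan is to recast the nonexistence of separable quotients as a statement about the dual space, and then to build a barrelled space whose dual is ``too large'' to satisfy that statement.

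First I would prove a duality reduction. If $q\colon E\to F$ is the canonical map onto a quotient $F=E/M$ with $M$ closed, then the adjoint $q'\colon F'\to E'$ is an injective weak*-to-weak* homeomorphism onto the weak*-closed subspace $H:=M^{\perp}=q'(F')$ of $E'$, and $\dim H=\dim F'$. Suppose in addition that $F$ is separable and infinite-dimensional. Fix a countable dense set $\{d_k\}\subseteq F$ and lift it to a countable set $S=\{e_k\}\subseteq E$ with $q(e_k)=d_k$. Since every $f\in F'$ is continuous and hence determined by its values on the dense set $\{d_k\}$, and since $\langle q'f,e_k\rangle=\langle f,d_k\rangle$, the countable set $S$ separates the points of $H$; equivalently $H\cap S^{\perp}=\{0\}$, where $S^{\perp}=\{h\in E':\langle h,s\rangle=0 \text{ for all } s\in S\}$. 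As $F'$ is infinite-dimensional whenever $F$ is, this produces an infinite-dimensional weak*-closed subspace $H$ of $E'$ that is separated by countably many points of $E$. Thus it suffices to construct a barrelled $E$ for which no such $H$ exists: for every countable $S\subseteq E$, every weak*-closed subspace $H$ with $H\cap S^{\perp}=\{0\}$ is finite-dimensional.

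Next I would produce the example. Since the normable case is precisely the open Banach problem and every non-normable Fr\'echet space already has a separable quotient by Theorem~\ref{SQFrechet}, the counterexample should be sought among non-metrizable barrelled spaces: a suitably topologized subspace of a product $\R^{I}$ over an uncountable index set $I$, or a transfinite inductive limit, chosen so that the dual pairing is spread out over $I$ to the extent that any countable $S\subseteq E$ can ``see'' only a finite-dimensional portion of any weak*-closed subspace. Barrelledness itself I would secure by a structural stability result (barrelledness passes to products and to suitable inductive limits, and every Baire locally convex space is barrelled), arranging the construction so that one of these sufficient conditions applies.

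The hard part will be to reconcile the two requirements, because they pull in opposite directions. Barrelledness is a largeness condition: it forces the dual to carry many functionals and tends to manufacture quotients, which is exactly why the affirmative answer is known for so many classes and why finding a barrelled counterexample (rather than a merely locally convex one) is the whole point of the theorem. Killing every infinite-dimensional separable quotient, on the other hand, is a rigidity condition on $E'$. The crux is therefore a combinatorial or transfinite argument showing that, in the constructed space, a countable family $S$ is inextricably confined to a finite-dimensional trace on each weak*-closed subspace, so that the necessary condition extracted in the reduction can never be met while barrelledness is simultaneously maintained.
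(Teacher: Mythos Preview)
The paper does not prove this theorem at all: it is stated with the attribution \cite{KakolSaxonTodd} and no argument is given, because it is quoted as background from the literature. So there is no ``paper's own proof'' to compare your proposal against.

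That said, on its own terms your proposal is not a proof but a programme with the decisive step missing. Your duality reduction is correct: if $E$ admits an infinite-dimensional separable quotient $F=E/M$, then $H=M^{\perp}\subseteq E'$ is weak*-closed, infinite-dimensional, and separated by countably many elements of $E$. But the theorem is an \emph{existence} statement, and everything hinges on actually exhibiting a barrelled $E$ in which this necessary condition fails. Your text does not do that: you only say the example ``should be sought among non-metrizable barrelled spaces'' as a subspace of some $\R^{I}$ or a transfinite inductive limit, and you explicitly concede that ``the hard part will be to reconcile the two requirements.'' That concession is accurate and fatal: barrelledness is preserved by products and inductive limits, but those very operations tend to create separable quotients (every product $\R^{I}$ maps onto $\R^{\omega}$, and inductive limits typically have separable steps as quotients), so the generic constructions you name will not work without a further, nontrivial idea that you have not supplied.

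In short, the reduction is a reasonable first paragraph, but the proof as written contains no construction, and the construction is the theorem. If you want to complete it you will need to consult the actual argument in K\c{a}kol--Saxon--Todd (or the earlier work it builds on), where a specific barrelled space is produced and verified; nothing in the present paper will help, since here the result is only cited.
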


However the following results go in the positive direction. 

\begin{thm}\label{C(X)}\cite{KakolSaxonTodd} Let $X$ be any infinite Tychonoff space and $C_c(X)$ the linear space of all real-valued continuous functions on $X$ endowed with the compact-open topology. If $C_c(X)$ is barrelled, then it has a quotient locally convex space which is infinite-dimensional and separable. 
\end{thm}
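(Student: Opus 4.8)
The plan is to split the argument according to whether or not $X$ is pseudocompact, treating the non-pseudocompact case by an explicit, self-contained construction of a quotient isomorphic to the separable Fr\'echet space $\R^{\omega}$, and reducing the pseudocompact case to the Separable Quotient Problem for the Banach spaces $C(K)$.

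First I would dispose of the case in which $X$ is \emph{not} pseudocompact. Then $X$ carries an unbounded continuous function $h$, and after rescaling I may pick points $x_n\in X$ with $h(x_n)=n$. The set $A=\{x_n:n\in\N\}$ is closed and discrete, and the cozero sets $U_n=h^{-1}\big((n-\tfrac12,n+\tfrac12)\big)$ are pairwise disjoint with $x_n\in U_n$, and locally finite in the strong sense that each compact $L\sub X$ meets only finitely many of them. Choosing $\psi_n\in C(X)$ with $\psi_n(x_n)=1$, $0\le\psi_n\le1$ and $\supp\psi_n\sub U_n$, I would consider the restriction map $T\colon C_c(X)\to\R^{A}\cong\R^{\omega}$, $T(f)=(f(x_n))_n$, which is continuous; the assignment $(a_n)\mapsto\sum_n a_n\psi_n$ produces, for every prescribed family of values, a continuous preimage, so $T$ is onto and $A$ is $C$-embedded. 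The key point is that $T$ is \emph{open}: for each compact $L$ the quotient seminorm of $p_L(f)=\sup_{x\in L}|f(x)|$ is bounded above by $\max\{|f(x_n)|:U_n\cap L\ne\emp\}$, a maximum over a finite set of coordinates, whence the quotient topology on $C_c(X)/\ker T$ coincides with the product topology of $\R^{\omega}$. Thus $\R^{\omega}$ is a separable, metrizable, infinite-dimensional quotient of $C_c(X)$, and notably this half uses nothing beyond non-pseudocompactness.

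It remains to treat the pseudocompact case, and here the barrelledness hypothesis finally enters. By the Nachbin--Shirota theorem, $C_c(X)$ barrelled forces $X$ to be a $\mu$-space, i.e.\ every closed functionally bounded subset is compact; since a pseudocompact space is itself functionally bounded, $X$ is then compact. Consequently $C_c(X)=C(X)$ is an infinite-dimensional Banach space, and the task reduces to producing an infinite-dimensional separable quotient of $C(K)$ for an infinite compact space $K=X$. When $K$ is metrizable this is immediate, since $C(K)$ is then itself separable. In general I would pass to the dual: the space $M(K)=C(K)^{*}$ of Radon measures contains an isometric copy of $\ell_1$ coming from point masses, and when $K$ is not scattered it carries a non-atomic measure, so $M(K)$ contains an infinite-dimensional reflexive subspace (a copy of $\ell_2$ inside an $L_1$-part); Corollary~\ref{separablesubspaceofdual} then yields the desired separable quotient of $C(K)$.

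The hard part will be exactly this last, Banach-space, step for non-metrizable $K$, and in particular the scattered case, where $M(K)$ is purely atomic and contains no infinite-dimensional reflexive subspace, so that neither Corollary~\ref{separablesubspaceofdual} nor Theorem~\ref{Argyros1} applies verbatim. Closing this sub-case requires the known positive solution of the Separable Quotient Problem for $C(K)$-spaces, and it is here, rather than in the functional-analytic machinery of quotients, that the real content of the theorem is concentrated; the non-pseudocompact case and the reduction via Nachbin--Shirota are comparatively routine.
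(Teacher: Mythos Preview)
The paper does not prove this theorem at all: it is stated in the introduction with the citation \cite{KakolSaxonTodd} and no argument is given, since it is quoted only as background for the separable quotient problem in the locally convex setting. So there is no ``paper's own proof'' to compare your proposal against.

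That said, your outline is essentially the standard route and is correct. The pseudocompact/non-pseudocompact dichotomy, together with Nachbin--Shirota to force compactness in the pseudocompact barrelled case, is exactly how the cited source proceeds, and your explicit construction of a quotient $\R^\omega$ in the non-pseudocompact case is fine (the family $\{U_n\}$ is genuinely locally finite because $h$ is continuous, so $\sum_n a_n\psi_n$ is well defined). One cosmetic slip: you cannot in general arrange $h(x_n)=n$ on the nose, only $|h(x_n)|\to\infty$ fast enough, but the construction of disjoint cozero sets goes through unchanged. Your reflexive-subspace argument for non-scattered $K$ is valid, since a non-atomic $\mu$ yields $L_1(\mu)\hookrightarrow M(K)$ and $L_1$ contains $\ell_2$ via Gaussian variables, so Corollary~\ref{separablesubspaceofdual} applies. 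You are also right that the scattered case is where the content lies and that neither Corollary~\ref{separablesubspaceofdual} nor Theorem~\ref{Argyros1} covers it; however, this sub-case can be closed directly rather than by citation: an infinite scattered compact $K$ always contains a non-trivial convergent sequence $x_n\to x$, and $f\mapsto (f(x_n)-f(x))_{n}$ is then a bounded linear surjection of $C(K)$ onto $c_0$, hence a quotient map by the open mapping theorem.
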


\begin{thm}\label{C(X)duals}\cite{KakolSaxonTodd} Let $X$ be any infinite Tychonoff space. Both the strong and weak duals of $C_c(X)$ have a quotient locally convex space which is infinite-dimensional and separable. \end{thm}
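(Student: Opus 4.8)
The plan is to exploit the standard duality between linear subspaces of a locally convex space $E$ and quotients of its dual. Writing $E=C_c(X)$ and $E'=C_c(X)'$, for a linear subspace $G\sub E$ its annihilator $G^\perp=\{\mu\in E':\langle\mu,g\rangle=0\text{ for all }g\in G\}$ is closed for $\sigma(E',E)$, hence also for the finer strong topology $\beta(E',E)$, so $E'/G^\perp$ is a Hausdorff quotient of each of the two duals. Since $X$ is infinite and Tychonoff, $C(X)$ is infinite-dimensional and every evaluation $\delta_x$ ($x\in X$) lies in $E'$. Fixing a linearly independent sequence $g_0,g_1,\dots\in C(X)$ and putting $G=\mathrm{span}\{g_n\}$, I would study the map $\Phi\colon E'\to\R^\omega$, $\Phi(\mu)=(\langle\mu,g_n\rangle)_n$, whose kernel is exactly $G^\perp$.

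For the weak dual this already settles the matter. The map $\Phi$ is $\sigma(E',E)$-continuous since each coordinate is, so the induced injection $\iota\colon E'_\sigma/G^\perp\to\R^\omega$ is continuous; and $\iota$ is open onto its image $Q:=\Phi(E')$, because the $\iota$-preimage of a basic neighbourhood $\{|c_n|<\delta,\ n\le m\}$ of $\R^\omega$ equals $q(\{\mu:|\langle\mu,g_n\rangle|<\delta,\ n\le m\})$, the image under the open quotient map $q$ of a basic weak$^*$ neighbourhood of $0$. Hence $E'_\sigma/G^\perp\cong Q$ topologically. As a subspace of the separable metrizable space $\R^\omega$, $Q$ is separable, and since each projection $\mu\mapsto(\langle\mu,g_n\rangle)_{n\le m}$ is onto $\R^m$ by Hahn--Banach and the independence of the $g_n$, the space $Q$ is infinite-dimensional. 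This disposes of the weak dual for every infinite Tychonoff $X$.

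For the strong dual the quotient topology on $E'_\beta/G^\perp$ is strictly finer than the trace of $\R^\omega$, so separability is no longer free and the choice of the $g_n$ becomes the crux. My device would be to arrange $Q=\R^{(\omega)}$, the space of finitely supported sequences: then $E'/G^\perp$ has countable algebraic dimension, and a space of countable dimension is separable in \emph{every} locally convex topology, since the rational span of a Hamel basis is a countable dense set. To force $\Phi(E')\sub\R^{(\omega)}$ it suffices to take the $g_n$ with pairwise disjoint, locally finite supports: every $\mu\in E'$ has compact support $K$, a compact set meets only finitely many members of a locally finite family, so $\langle\mu,g_n\rangle=0$ for all but finitely many $n$; disjointness of the supports gives, via suitable $\delta_x$, every standard basis vector in $Q$, whence $Q=\R^{(\omega)}$. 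Such a family exists whenever $X$ is \emph{not} pseudocompact: an unbounded $f\in C(X)$ and points $x_k$ with $|f(x_{k+1})|>|f(x_k)|+2$ produce the pairwise disjoint cozero sets $V_k=f^{-1}\big((f(x_k)-\tfrac12,f(x_k)+\tfrac12)\big)$, which form a locally finite family because $|f(x_k)|\to\infty$, and one lets $g_k$ be a bump function supported in $V_k$ with $g_k(x_k)=1$.

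The remaining, and genuinely harder, case is pseudocompact $X$, where no infinite locally finite family of nonempty open sets exists and the countable-dimension trick is unavailable; this is where I expect the real fight. When $X$ is compact the point is cheap, for then $C_c(X)=C(X)$ is a Banach space, its strong dual is the Banach dual $M(X)$, and Theorem~\ref{Argyros1} supplies a separable infinite-dimensional quotient Banach space. The obstacle is the pseudocompact, non-compact case: here $C_c(X)$ is non-normable, $E'_\beta$ fails to be a Banach dual (it is only the subspace of $M(\beta X)$ of measures concentrated on compact subsets of $X$), and neither the disjoint-support construction nor a direct appeal to the Banach theory applies. I would attack it by passing to an infinite compact subset $K\sub X$, when one is present, and transporting a separable quotient of $M(K)$ back through the restriction map; and in the residual situation in which all compact subsets are finite, so that $C_c(X)=C_p(X)$ and $E'=\mathrm{span}\{\delta_x:x\in X\}$ consists of finitely supported measures, I would replace local finiteness by mere point-finiteness of the family $\{V_k\}$, which is exactly what is needed to keep $\Phi(E')\sub\R^{(\omega)}$ in that setting.
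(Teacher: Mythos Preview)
The paper does not contain a proof of this statement: Theorem~\ref{C(X)duals} is quoted from \cite{KakolSaxonTodd} as background in the introduction, with no argument supplied. There is therefore no ``paper's own proof'' to compare your proposal against.

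As for the proposal itself, your treatment of the weak dual is clean and complete, and the non-pseudocompact case of the strong dual via a locally finite disjoint cozero family and the countable-dimension trick is correct. However, the pseudocompact case is, by your own admission, only a plan of attack rather than a proof. Two specific soft spots: first, ``transporting a separable quotient of $M(K)$ back through the restriction map'' is not automatic, since the restriction $C_c(X)\to C(K)$ need not be surjective unless $K$ is $C$-embedded, and even then one must check that the induced map on strong duals is open onto its image so that a quotient of $M(K)$ really yields a quotient of $E'_\beta$; second, in the residual case where all compact subsets of $X$ are finite, you need to \emph{produce} an infinite point-finite family of nonempty cozero sets in a pseudocompact space, and you have not indicated how. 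So the proposal is a reasonable outline for the weak dual and for part of the strong-dual case, but it is not a complete proof of the theorem as stated; for that you would need to consult the original argument in \cite{KakolSaxonTodd}.
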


\medskip
Once again we note that there are many partial positive solutions in the literature to Problem \ref{SQlcs} (see also \cite{KakolSaxon}) which are out of the scope of these introductory remarks. 

\medskip
Now we turn to the specific topic of this paper, the Separable Quotient Problem(s) for Topological Groups. 
We shall in fact state ten quite natural problems.

\smallskip
\noindent {\bf Terminology and Basic Facts.} \quad  A topological space $X$ is said to be \emph{hereditarily separable} if $X$ and every subspace of $X$ is separable. A topological space is said to be \emph{second countable} if its topology has a countable base.
A topological space $X$ is said to have  a \emph{countable network} if there exists a countable family ${\mathcal B}$ of  (not necessarily open) subsets
such that each open set of $X$ is a union of members of $\mathcal B$. 
\begin{itemize}
\item[(i)] Any space with a countable network is hereditarily  separable;  
\item[(ii)] a metrizable space is separable if and only if it is second countable;
\item[(iii)] any continuous image of a separable space is separable; 
\item[(iv)] countable networks are preserved by continuous images; 
\item [(v)] a quotient image of a second countable space need not be second countable; 
\item[(vi)]  a closed subgroup of a separable topological group is not necessarily separable.
However, the class of topological groups with the property that every closed subgroup is separable
is quite large and natural, and in particular it
 includes all separable locally compact groups. This class of topological groups
has been thoroughly investigated  in \cite{LMT(TAMS_2017)}, \cite{LT(TA_2017)}. 
\end{itemize}
An  abstract group is called \emph{simple} if it has no proper non-trivial normal subgroup, where a group is called \emph{non-trivial} if it has at least two elements.  A topological group $G$ is said to be  \emph{topologically simple} if it has no proper non-trivial closed normal subgroup. As the connected component of the identity of any topological group is a closed normal subgroup, every topologically simple group is either  totally disconnected or connected. 
\newline Throughout this paper  all topological groups are assumed to be Hausdorff. The circle group 
with the usual multiplication and compact topology inherited from the complex plane is denoted 
by $\T$.  The product of elements $x,y\in\T$ will be denoted by $x\cdot y$, while we will use 
additive notation for all abelian groups distinct from $\T$. A \emph{character} of a group $H$ is 
a homomorphism of $H$ to $\T$. The additive topological group of all real numbers with the 
euclidean topology  is denoted by $\R$. The cardinality of the continuum is denoted by $\cont$, 
so $\cont=2^\omega$. By Lie group we mean a real finite-dimensional Lie group; the $0$-dimensional Lie groups are discrete. \qed

\smallskip
We begin with an example which says that we must exclude some discrete groups and indeed some totally disconnected groups.

 \begin{em-example}\label{Discrete} 
 {\rm Let $\aleph$ be any uncountable cardinal number and $S$ a set of cardinality $\aleph$. Let $A_\aleph$ be the \emph{finitary alternating group on the set $S$}; that is, $A_\aleph$ is the group of all even permutations of the set $S$ which fix all but a finite number of members of $S$. That  $A_\aleph$ is an uncountable simple group follows easily from \cite[3.2.4]{Robinson}.  So if $A_\aleph$ is given the discrete topology, it is an infinite discrete topologically simple group and so  does not have a (proper) quotient group which is a non-trivial  topological group.  
 
George A.~Willis \cite[\S 3]{Willis} (see also \cite{Capracepreprint})
 extended these examples of discrete topologically simple groups to produce, for each infinite cardinal number $\aleph$, a non-discrete  totally disconnected locally compact topologically simple group of cardinality (and weight) $\aleph$. 
 
 By contrast, Theorem~\ref{SQLCATheorem} shows that every infinite discrete abelian group $G$ does indeed have a quotient group which is a countably infinite (discrete) group. (Indeed  $G$ has a quotient group  of  cardinality $\kappa$, for each infinite $\kappa$ less than the cardinality of $G$.)} \qed
 \end{em-example}

\begin{problem}\label{SQTopGps} \textbf{Separable Quotient Problem for Topological Groups.} Does every   non-totally disconnected topological group have a quotient group which is a non-trivial separable topological group? 
\end{problem}

\begin{problem}\label{SQInfiniteTopGps} \quad \textbf{Separable Infinite Quotient Problem for Topological Groups.} Does every  non-totally disconnected  topological group have a  quotient group which is an infinite separable topological group? \end{problem}

\begin{problem}\label{SQMetrizableTopGps} \textbf{Separable Metrizable Quotient Problem for Topological Groups.} Does every non-totally disconnected topological group have a quotient group which is a non-trivial separable metrizable topological group? 
\end{problem}

\begin{problem}\label{SQInfiniteMetrizableTopGps} \textbf{Separable Infinite Metrizable Quotient Problem for Topological Groups.} Does every  non-totally disconnected topological group have 
a quotient group which is an  infinite separable metrizable topological group? 
\end{problem}

 Regarding Problem \ref{SQInfiniteMetrizableTopGps}, one might reasonably ask: If the topological group $G$ has a quotient group which is infinite and separable, does $G$ necessarily have a quotient group which is infinite, separable and metrizable? This question is answered negatively in Proposition \ref{Ex:Qs}.

\smallskip
Theorem~\ref{SaxonWilansky} suggests the following problem for topological groups.

\smallskip
We shall define a topological group to be a \emph{$\boldsymbol{G_\sigma}$-group} if it has a dense subgroup $H$ which is the union of a strictly increasing sequence of closed topological subgroups.

\begin{problem}\label{SQGsigma}\textbf{Separable Quotient Problem for $\boldsymbol{G_\sigma}$-Groups.}  Does every non-discrete $G_\sigma$-group have a separable quotient group which is (i) non-trivial; (ii) infinite; (iii) metrizable; (iv) infinite metrizable? 
\end{problem}

Corollary \ref{reflexiveBanach} suggests the following problem for \emph{reflexive} topological groups; that is, topological groups for which the natural map of the topological group into the Pontryagin dual of its Pontryagin dual is an isomorphism of topological groups. 

\begin{problem}\label{SQReflexiveTopGp} \textbf{Separable Quotient Problem for Reflexive Topological\\ Groups.} Does every infinite reflexive abelian topological group, $G$, have a  separable quotient group which is (i) non-trivial; (ii) infinite; (iii) metrizable; (iv) infinite metrizable?
\end{problem}

A special case of Problem~\ref{SQReflexiveTopGp} is:

\begin{problem}\label{SQLCA}  \textbf{Separable Quotient Problem for Locally Compact Abelian Groups.} Does every infinite locally compact abelian group, $G$, have a  separable quotient group which is (i) non-trivial; (ii) infinite; (iii) metrizable; (iv) infinite metrizable?
\end{problem}

As another special case of Problem \ref{SQReflexiveTopGp} one might be tempted to ask whether every topological group, $G$, which is the underlying topological group of an infinite-dimensional Banach space has  a separable quotient group which is (i) non-trivial; (ii) infinite; (iii) metrizable; (iv) infinite metrizable? But a positive answer to all these questions follows immediately from the fact that $\R^n$, for every $n\in \N$, is a a quotient locally convex space space of every infinite-dimensional locally convex space. Noting that
according to Anderson-Kadec theorem 
 each infinite-dimensional separable Banach space (indeed each infinite-dimensional separable Fr\'echet space) is homeomorphic to $\R^\omega$ (\cite[Chapter VI, Theorem 5.2]{BessagaPelczynski}), the next question is pertinent. Of course a positive answer to Problem \ref{SQProblem} would yield a positive answer to Problem \ref{SQBanachTopGp}.

\begin{problem}\label{SQBanachTopGp}\qquad \textbf{Separable Quotient Problem for Banach Topological\newline Groups.} Does every topological group, which is the underlying topological group of an infinite-dimensional  Banach space have a separable quotient group which is homeomorphic to   $\R^\omega$?
\end{problem}

The non-abelian version of Problem \ref{SQLCA} is:

\begin{problem}\label{SQLC}  \textbf{Separable Quotient Problem for Locally Compact  Groups.} Does every non-totally disconnected locally compact  group, $G$, have a  separable quotient group which is (i) non-trivial; (ii) infinite; (iii) metrizable; (iv) infinite metrizable?
\end{problem}

As a special case of Problem~\ref{SQLC} we have:

\begin{problem}\label{SQCompact}  \textbf{Separable Quotient Problem for Compact  Groups.} Does every  infinite compact  group, $G$, have a  separable quotient group which is  (i) non-trivial; (ii) infinite; (iii) metrizable; (iv) infinite metrizable?
\end{problem}

We shall address these problems in subsequent sections.


\section{Locally Compact Groups and Pro-Lie Groups} 

\noindent In this section we provide a positive answer to each of Problem~\ref{SQCompact} (i), (ii), (iii), and (iv) and Problem~\ref{SQLCA} (i), (ii), (iii), and (iv), and a partial answer to Problem~\ref{SQLC}. We prove satisfying results for  pro-Lie groups. We also prove stronger structural  results for compact abelian groups, connected compact groups and totally disconnected compact groups.

\begin{thm}\label{compactabelian} Every non-separable compact abelian group $G$ has a quotient group $Q$ which is  a countably infinite product of non-trivial compact  finite-dimensional Lie groups. The quotient group, $Q$, is therefore an infinite separable metrizable group. 
\end{thm}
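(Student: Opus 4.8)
The plan is to pass through Pontryagin duality and reduce the theorem to a purely algebraic statement about the discrete dual group. Write $\widehat{G}$ for the Pontryagin dual of $G$, a discrete abelian group. First I would observe that, since $G$ is not separable, it cannot be metrizable, and hence $\widehat{G}$ is uncountable: if $\widehat{G}$ were countable then $G$ would embed as a closed subgroup of the second countable group $\T^{\widehat{G}}=\T^{\omega}$ and would therefore be separable. Now recall the annihilator correspondence between the closed subgroups of the compact group $G$ and the subgroups of the discrete group $\widehat{G}$, where a subgroup $A\sub\widehat{G}$ is sent to $A^{\perp}=\{g\in G:\chi(g)=1 \text{ for all }\chi\in A\}$. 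Setting $H=A^{\perp}$, standard annihilator duality gives $(G/H)^{\wedge}\cong A$, and since $G/H$ is compact we obtain $G/H\cong\widehat{A}$. Thus it suffices to produce a subgroup $A\sub\widehat{G}$ of the form $A\cong\bigoplus_{n=1}^{\infty}A_n$ with each $A_n$ a non-trivial finitely generated abelian group: then $Q=G/H\cong\widehat{A}\cong\prod_{n=1}^{\infty}\widehat{A_n}$, each $\widehat{A_n}$ being a non-trivial compact finite-dimensional Lie group, exactly the required form.

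The heart of the matter is therefore the algebraic assertion that every uncountable abelian group $D$ contains a subgroup isomorphic to a countably infinite direct sum of non-trivial cyclic groups. I would prove this by cases on the torsion subgroup $T$ of $D$. If the torsion-free quotient $D/T$ is uncountable, then it has infinite torsion-free rank, because a torsion-free group of finite rank embeds into a finite-dimensional $\mathbb{Q}$-vector space and so is countable; choosing a countably infinite independent family in $D/T$ and lifting it to $D$ yields elements that remain independent modulo $T$, hence independent in $D$, and so generate a free subgroup $\bigoplus_{n}\Z$. If instead $D/T$ is countable, then $T$ is uncountable, and I pass to its primary decomposition $T=\bigoplus_{p}T_p$. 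If infinitely many $T_p$ are non-trivial, then selecting one non-zero element from each of countably many of them gives a direct sum of non-trivial cyclic groups, since distinct primary components are independent. Otherwise a single $T_p$ is uncountable, which is the one remaining situation.

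The step I expect to be the crux is this last case: showing that an uncountable $p$-group $T_p$ contains $\bigoplus_{n}\Z(p)$. Here I would examine the socle $T_p[p]=\{x\in T_p:px=0\}$, an $\mathbb{F}_p$-vector space, and argue it must be infinite-dimensional. If it had finite dimension $k$, then for each $n$ the map $x\mapsto p^{n-1}x$ would embed $T_p[p^n]/T_p[p^{n-1}]$ into $T_p[p]$, forcing $|T_p[p^n]|\le p^{kn}<\infty$; since every element of $T_p$ has finite order we would have $T_p=\bigcup_{n}T_p[p^n]$ countable, a contradiction. Hence $T_p[p]$ is infinite-dimensional and already contains a copy of $\bigoplus_{n}\Z(p)$, which completes the lemma. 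With the lemma established, the duality computation of the first paragraph delivers the quotient $Q$, and it remains only to note the routine facts that a countable product of second countable compact groups is compact, metrizable and separable, and that $Q$ is infinite because infinitely many of its factors are non-trivial.
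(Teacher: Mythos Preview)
Your proof is correct and takes a genuinely different route from the paper. The paper argues directly on the compact side, splitting into three cases according to whether $G$ is totally disconnected, whether $\rank\widehat{G}$ is infinite, and the residual case where $G_0$ turns out to be separable metrizable; each case invokes a structure theorem from Hofmann--Morris (the description of profinite abelian groups as products of bounded cyclic groups, the existence of a torus quotient $\T^m$ when the rank is infinite, and the three-space property of separability). Your approach instead passes wholesale to the discrete dual and reduces the theorem to the single algebraic lemma that every uncountable abelian group contains a countably infinite direct sum of non-trivial cyclic groups, which you then prove by an elementary torsion/torsion-free and primary-decomposition analysis. What your route buys is self-containment and uniformity: only basic Pontryagin duality and elementary abelian group theory are used, with no appeal to the structure theory of compact groups. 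What the paper's route buys is sharper information about the quotient in each case (e.g.\ an explicit $\T^\omega$ quotient when $\rank\widehat{G}$ is infinite, and a product of cyclic groups of bounded order in the totally disconnected case), and it meshes with the non-abelian results that follow. One small remark: your opening observation that non-separable implies non-metrizable is correct but not actually needed, since your direct argument (countable dual $\Rightarrow$ $G\hookrightarrow\T^\omega$ $\Rightarrow$ separable) already gives $|\widehat{G}|>\omega$.
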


\begin{proof} \quad \rm  Case 1.  $G$ is totally disconnected. By Corollary~8.8\,(iii) of \cite{COMPBOOK}, $G$ is a direct product of $p$-groups. By \cite{COMPBOOK} E8.4, $G$ is therefore a direct product of cyclic groups of bounded order. As $G$ is not separable, this cannot be a finite product.  Consequently $G$ has a quotient group which is a countably infinite direct product of cyclic groups of bounded order and so the theorem is true in Case~1.

Case 2. $m=\rank \widehat G$ is infinite, where $\widehat G$ denotes the Pontryagin dual of $G$. 
By Proposition~8.15 of \cite{COMPBOOK}, $G$ contains a compact totally disconnected  group 
$\Delta$ such that the quotient group $G/\Delta$ is the torus $\T^m$. So $G$ has a quotient group 
$\T^{\omega}$.

Case 3. $m=\rank \widehat G$ is finite.  By Theorem~8.22 (10) of \cite{COMPBOOK}, $\rank \widehat {G_0}$ is finite, where $G_0$ denotes the connected component of the identity. By Corollary~8.24\,(i) of \cite{COMPBOOK}, $m=\dim G=\dim G_0$. By Corollary~8.24\,(iii) of \cite{COMPBOOK}, $G_0$ satisfies the second axiom of countability, and so is separable and metrizable. As seen in Theorem~1.5.23 of \cite{AT}, separability is a Three Space Property. So if $G_0$ and $G/G_0$ were  separable, then $G$ would be  separable, which would contradict our assumption that $G$ is not separable. Hence $G/G_0$ must be non-separable. As seen in the proof of Case 1 above, the totally disconnected group $G/G_0$ must be an infinite direct product of cyclic groups of bounded order. Hence $G/G_0$, and also $G$, have a quotient group which, as a countably infinite direct product of cyclic groups of bounded order, is  a countably infinite product of non-trivial compact finite-dimensional Lie groups. 
\end{proof}

\begin{thm}\label{compactconnected} Every non-separable connected  compact group, $G$, 
has a quotient group, $Q$, which is  a countably infinite product of non-trivial compact finite-di\-men\-sional Lie groups. The quotient group, $Q$, is therefore an infinite separable metrizable group. 
\end{thm}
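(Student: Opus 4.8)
The plan is to reduce the connected case to the structure theory of compact connected groups, paralleling the abelian argument of Theorem~\ref{compactabelian} but exploiting the structure theorem that expresses a compact connected group in terms of its center and a product of simple simply connected compact Lie groups. First I would recall the Levi–Mal'cev–type structure theorem for compact connected groups (Theorem~9.24 or 9.19 of \cite{COMPBOOK}): there is a surjective continuous homomorphism
\[
Z_0(G) \times \prod_{j\in J} S_j \longrightarrow G,
\]
where $Z_0(G)$ is the (compact connected abelian) identity component of the center and each $S_j$ is a simply connected compact simple Lie group, the kernel being central and totally disconnected. The weight $w(G)$ is the supremum of $w(Z_0(G))$ and the cardinality of $J$, so non-separability of $G$ forces at least one of two cases.

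The first case is that the index set $J$ is infinite. Then $G$ has a quotient isomorphic to a countably infinite subproduct $\prod_{j\in J'} S_j$ with $J'$ countably infinite: one passes to the quotient by the closed normal subgroup generated by the remaining factors and by $Z_0(G)$. Each $S_j$ is a non-trivial compact finite-dimensional Lie group, so this quotient is already of the desired form. The second case is that $J$ is finite (or empty) but $Z_0(G)$ is non-separable; here I would push forward through the projection onto the central factor to obtain $Z_0(G)$, which is a non-separable compact connected abelian group, and invoke Theorem~\ref{compactabelian} applied to $Z_0(G)$ — or rather its proof, since a non-separable compact connected abelian group falls into Case~2 of that theorem (its dual is torsion-free of infinite rank) and therefore has $\T^\omega$ as a quotient, which is a countably infinite product of the non-trivial compact one-dimensional Lie group $\T$.

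The one subtlety I would check carefully is that a quotient of $G$ and a quotient of a central direct factor are compatible: the map $Z_0(G)\times\prod_j S_j \to G$ is surjective with central kernel, so any quotient of the product by a closed normal subgroup containing the kernel descends to a quotient of $G$, and conversely a quotient of $G$ pulls back. Concretely, to realize $\T^\omega$ or $\prod_{j\in J'}S_j$ as a quotient of $G$ itself (not merely of the covering product) I would compose the covering homomorphism with the projection onto the relevant factors and then quotient $G$ by the closure of the image of the complementary factors together with the relevant kernel; since all the subgroups involved are normal (indeed central or whole simple factors), these operations are legitimate and yield a genuine topological quotient group of $G$.

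The main obstacle I anticipate is bookkeeping with the central kernel rather than anything deep: one must ensure that factoring out the complementary simple factors and the central totally disconnected kernel does not accidentally collapse the countably many factors one wishes to retain, and that the resulting map is open onto its image so that the quotient genuinely carries the product topology. This is precisely where one uses that the kernel is \emph{totally disconnected} and central, so it meets each connected factor $S_j$ in a finite central subgroup and its removal replaces each $S_j$ only by a finite central quotient, which remains a non-trivial compact finite-dimensional Lie group. Everything else — second countability of finite-dimensional Lie groups and of $\T^\omega$, hence separability and metrizability of the countable product — is routine and already used in Theorem~\ref{compactabelian}.
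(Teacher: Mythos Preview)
Your approach is essentially the same as the paper's---both split on whether the index set $J$ of simple factors is infinite, and in the remaining case reduce to Theorem~\ref{compactabelian} via the abelian part---but the paper short-circuits all the descent bookkeeping you flag. Instead of the covering $Z_0(G)\times\prod_j S_j\to G$, the paper invokes the Sandwich Theorem (Corollary~9.25 and Theorem~9.24 of \cite{COMPBOOK}), which gives $G/G'\times\prod_{j\in J} S_j/Z(S_j)$ directly as a \emph{quotient} of $G$. This makes Case~1 immediate (project onto a countable subproduct of the adjoint groups) and in Case~2 one applies Theorem~\ref{compactabelian} to the quotient $G/G'$ rather than to the subgroup $Z_0(G)$; your phrasing ``push forward through the projection onto the central factor to obtain $Z_0(G)$'' is the one genuine slip, since $Z_0(G)$ is not a quotient of $G$, though $G/G'$ (which differs from it only by a finite group when $J$ is finite) is. Your worry that the totally disconnected central kernel might not split as a product across the $S_j$ is legitimate, and the paper's use of the adjoint quotients $S_j/Z(S_j)$ is precisely what dissolves it.
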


\begin{proof}
By The Sandwich Theorem for Compact Connected Groups, Corollary~9.25, and Theorem~9.24 of \cite{COMPBOOK}, $G$ has a quotient group $G/G'\times \prod\limits_{j\in J} S_j/Z(S_j)$, where $G'$ is the (closed) commutator subgroup of $G$, $J$ is some index set, $S_j$ is a topologically simple simply connected compact Lie group and $Z(S_j)$ is the center of $S_j$, for each $j\in J$.  

Case~1. $J$ is infinite. Then $G$ has $\prod\limits_{j\in J} S_j/Z(S_j)$ as a quotient group. Consequently $G$ has a quotient group which is a countably infinite product of non-trivial compact finite-dimensional Lie groups.

Case~2.  $J$ is finite and  $\rank\widehat{G/G'}$ is finite.  By Theorem~9.52 of \cite{COMPBOOK}, $G'$  is a compact Lie group and so is separable. Suppose that  $G/G'$ is separable. As separability is a Three Space Property by Theorem~1.5.23 of \cite{AT}, this would imply that $G$ is separable which is a contradiction.  So the compact abelian group $G/G'$ is not separable. By Theorem~\ref{compactabelian}, $G/G'$, and hence also $G$, have a quotient group which is  a countably infinite product of non-trivial compact finite-dimensional Lie groups. 

Case~3.  $J$ is finite and  $\rank\widehat{G/G'}$ is infinite. By Proposition~8.15 of \cite{COMPBOOK}, $G/G'$, and hence also $G$, have a quotient group which is  a countably infinite product of non-trivial compact finite-dimensional Lie groups. 
\end{proof}

\begin{rem}\label{discreteabelian} 
{\rm 
No discrete group 
has a quotient group which is a countably infinite product of non-trivial  topological groups since every quotient of a discrete group is evidently discrete. In particular then, a locally compact abelian group need not have a quotient group which is a countably infinite product of non-trivial  topological groups.}
\end{rem}

\begin{thm}\label{connectedLCA} Every non-separable connected locally compact abelian group, $G$, has a quotient group, $Q$, which is a countably infinite product of non-trivial compact finite-dimensional Lie groups. The quotient group, $Q$, is   therefore an infinite separable metrizable group. \end{thm}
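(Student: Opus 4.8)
The plan is to reduce the statement to the compact abelian case already handled in Theorem~\ref{compactabelian}, using the structure theory of connected locally compact abelian groups. The first step is to invoke the classical structure theorem, which asserts that every connected locally compact abelian group $G$ is topologically isomorphic to a product $\R^n\times K$, where $n\geq 0$ is an integer and $K$ is a compact connected abelian group. This splits off the vector (Lie) part of $G$ and isolates all of the ``large'' compact structure in the single factor $K$.

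The second step is to show that the failure of separability is inherited by $K$. Since $\R^n$ is second countable, hence separable, and a finite product of separable spaces is separable, the group $\R^n\times K$ would be separable whenever $K$ is. As $G\cong\R^n\times K$ is assumed non-separable, it follows that $K$ is a non-separable compact (connected) abelian group. At this point Theorem~\ref{compactabelian} applies directly to $K$ and produces a quotient group $Q$ of $K$ that is a countably infinite product of non-trivial compact finite-dimensional Lie groups; such a $Q$ is automatically infinite, separable and metrizable.

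It remains to transfer $Q$ from $K$ to $G$. The coordinate projection $\R^n\times K\to K$ is a continuous open surjective homomorphism, so $K$ is a quotient group of $G$, and composing it with the quotient map $K\to Q$ exhibits $Q$ as a quotient group of $G$ as well; this completes the argument. The only genuine content beyond routine bookkeeping is the structure theorem for connected locally compact abelian groups, since everything else is a transfer of the already-established compact result along a projection. Accordingly, I expect the one point requiring care to be citing the structure theorem in precisely the form needed---splitting $G$ as $\R^n\times K$ with $K$ compact connected abelian---and verifying that the non-separability genuinely migrates to $K$ rather than being supplied by the $\R^n$ factor.
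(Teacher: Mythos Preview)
Your proposal is correct and follows essentially the same route as the paper: invoke the structure theorem $G\cong \R^n\times K$ with $K$ compact, observe that non-separability of $G$ forces $K$ to be non-separable, and then apply the already-proved compact case. The only cosmetic difference is that the paper cites Theorem~\ref{compactconnected} (non-separable connected compact groups) for the factor $K$, whereas you cite Theorem~\ref{compactabelian} (non-separable compact abelian groups); since $K$ is both connected and abelian, either reference suffices.
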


\begin{proof} By Theorem~26 of \cite{Mor77} $G$ is isomorphic as a topological group to $\R^n\times K$, where $n$ is a non-negative integer and $K$ is a non-separable compact group. The required result then follows from Theorem~\ref{compactconnected}.
\end{proof}

\begin{thm}\label{compacttotallydisconnected} Every infinite  totally disconnected compact group $G$ has a quotient group, $Q$, which is homeomorphic to a countably infinite product of finite discrete topological groups. The quotient group, $Q$, is thus homeomorphic to the Cantor set and therefore is  an infinite separable metrizable group. 
\end{thm}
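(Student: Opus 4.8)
The plan is to exploit the profinite structure of $G$ to manufacture a \emph{metrizable} quotient and then recognise it topologically as a Cantor set. Since $G$ is compact and totally disconnected, it is profinite: the family $\mathcal N$ of open normal subgroups of $G$ forms a neighbourhood base at the identity and satisfies $\bigcap\mathcal N=\{e\}$ (see \cite{COMPBOOK}). Because $G$ is infinite and compact, the singleton $\{e\}$ is not open, so every $N\in\mathcal N$ has finite index and is therefore infinite.

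First I would build a strictly decreasing chain of open normal subgroups. Put $N_0=G$; having chosen an infinite $N_n\in\mathcal N\cup\{G\}$, pick $x_n\in N_n\setminus\{e\}$ and, using $\bigcap\mathcal N=\{e\}$, an $M_n\in\mathcal N$ with $x_n\notin M_n$, and set $N_{n+1}=N_n\cap M_n$. Then $N_{n+1}\in\mathcal N$, $N_{n+1}\subsetneq N_n$ (as $x_n\in N_n\setminus N_{n+1}$), and $N_{n+1}$ again has finite index, hence is infinite. This produces $G=N_0\supsetneq N_1\supsetneq\cdots$ with each $N_n$ open and normal of finite index. Set $H=\bigcap_n N_n$, a closed normal subgroup, and let $Q=G/H$, with the subgroups $\ovl N_n=N_n/H$ open and normal in $Q$.

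Next I would verify that $Q$ has the desired properties. It is compact and totally disconnected, being topologically isomorphic to the inverse limit $\varprojlim_n G/N_n$ of finite groups. The sizes $|G/N_n|$ strictly increase, and $Q$ surjects onto each $G/N_n$, so $Q$ is infinite. A finite-intersection-property argument shows that $\{\ovl N_n\}$ is a neighbourhood base at the identity of $Q$: if some open $U\ni\ovl e$ contained no $\ovl N_n$, then the nested nonempty closed sets $\ovl N_n\setminus U$ would, by compactness, have a common point, contradicting $\bigcap_n\ovl N_n=\{\ovl e\}\sub U$. Hence $Q$ is first countable, and so metrizable by the Birkhoff--Kakutani theorem (see \cite{AT}).

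Finally, $Q$ is an infinite compact group, so it is homogeneous with no isolated points (an isolated point would force $Q$ to be discrete, hence finite). A nonempty compact metrizable totally disconnected space without isolated points is homeomorphic to the Cantor set, by the classical Brouwer characterization; thus $Q$ is homeomorphic to $\{0,1\}^\omega=\prod_{n}\Z/2\Z$, a countably infinite product of finite discrete groups, and in particular $Q$ is an infinite separable metrizable group. The main point to get right is the passage to a \emph{metrizable} quotient---constructing the chain and proving that the images $\ovl N_n$ genuinely form a countable base at the identity---after which Brouwer's theorem does the rest.
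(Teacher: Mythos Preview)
Your proof is correct. The underlying strategy matches the paper's---produce an infinite compact totally disconnected \emph{metrizable} quotient and then invoke the Brouwer characterization of the Cantor set (this is exactly Theorem~10.40 of \cite{COMPBOOK}, which the paper cites)---but the execution differs. The paper works externally: it embeds $G$ as a closed subgroup of a product $\prod_{i\in I}F_i$ of finite groups and inductively selects coordinates $i_1,i_2,\ldots$ so that the projection of $G$ to $\prod_{j\in\N}F_{i_j}$ is infinite; compactness makes this projection a quotient map onto an infinite metrizable profinite group. You work internally: you build a strictly decreasing chain $N_0\supsetneq N_1\supsetneq\cdots$ of open normal subgroups and pass to $G/\bigcap_n N_n$, then check directly that the images $\ovl N_n$ form a base at the identity. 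The two constructions are dual (open normal subgroups of finite index correspond to finite quotients, and your chain corresponds to the paper's tower of projections), so neither is really more general; your version is arguably cleaner since it avoids the external embedding and the somewhat ad hoc element-chasing the paper uses to guarantee the projection is infinite, while the paper's version has the minor advantage of exhibiting the quotient concretely as a subgroup of a countable product.
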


\begin{proof} 
As $G$ is a profinite group it is isomorphic as a topological group to a closed subgroup of a product $\prod\limits_{i\in I} F_i$, where $I$ is an index set, and each $F_i$ is a discrete finite group. Let $p_i$ be the projection mapping of $G$ to $F_i$, for $i\in I$. We can assume that $p_i(G)=F_i$, for each $i\in I$. As $G$ is an  infinite group and each $F_i$ is a finite group, for each $y\in G$ and $i\in I$ there is an infinite number of members $z$ of $G$ such that $p_i(y)=p_i(z)$. Fixing $g_1\in G$ and $i_1\in I$, let $t_1=p_{i_1}(g_1)\in F_{i_1}$.  There exists an index $i_2$ and an element $g_2$ in $G$, such that $p_{i_1}(g_1)=p_{i_1}(g_2)$ and  $p_{i_2}(g_1)\ne p_{i_2}(g_2)=t_2\in F_{i_2}$. There are infinitely many $y$ in $G$ such that $p_{i_1}(y) = t_1$ and $p_{i_2}(y)=t_2$. So there exists an index $i_3$ and an element $g_3\in G$ such that $p_{i_1}(g_2)=p_{i_1}(g_3)$, $p_{i_2}(g_2)=p_{i_2}(g_3)$ but $p_{i_3}(g_2)\ne p_{i_3}(g_3)=t_3\in F_{i_3}$. So by induction we obtain infinite sequences of indices $i_1,i_2,i_3,\dots,i_n,\dots$ and elements $g_1,g_2,g_3,\dots,g_n,\dots$ in $G$ such that $p_{i_j}(g_{n+1})=p_{i_j}(g_n)$, $j=1,2,\dots,n$ but $p_{i_{n+1}}(g_{n+1})\ne p_{i_{n+1}}(g_n)$. Therefore the image $p(G)$ of $G$ in the product  $\prod\limits_{j\in\N}F_{i_j}$ is infinite, where $p$ is the projection mapping. 

So $p(G)$ is an infinite separable metrizable totally disconnected compact group. By Theorem~10.40 of \cite{COMPBOOK}, $p(G)$ is a Cantor set, and so is homeomorphic to a countably infinite product of finite discrete topological groups. As $G$ is compact, $p$ is a quotient mapping, which proves the result.
\end{proof}

We now give a positive answer to Problem~\ref{SQCompact} (i), (ii), (iii), and (iv). 

\begin{thm}\label{SQTheoremCompact} \textbf{(Separable Quotient Theorem for Compact Groups)}\quad Let $G$ be an infinite compact group. Then $G$ has a quotient group which is an infinite separable metrizable (compact) group.
\end{thm}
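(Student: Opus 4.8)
The plan is to reduce the general compact group $G$ to one of the already-established cases by passing through the connected component of the identity $G_0$, which is a closed normal subgroup. The key dichotomy is whether the non-separability of $G$ is ``carried'' by the connected part $G_0$ or by the totally disconnected quotient $G/G_0$. Since separability is a Three Space Property (Theorem~1.5.23 of \cite{AT}), if both $G_0$ and $G/G_0$ were separable then $G$ itself would be separable; so at least one of these two closed factors must fail to be separable whenever $G$ is non-separable. This is exactly the observation that powered the case analyses in Theorems~\ref{compactabelian} and~\ref{compactconnected}, and I would reuse it here.

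First I would dispose of the case where $G$ is already separable. If $G$ is separable and metrizable there is nothing to prove (take $Q=G$), but in general an infinite compact separable group need not be metrizable, so this case still requires an argument producing an infinite metrizable quotient; I would handle it by finding a continuous surjection onto an infinite metrizable compact group, for instance by exhibiting a non-trivial connected quotient (reducing to Theorem~\ref{compactconnected} applied to a suitable quotient) or a non-trivial totally disconnected quotient. Next, assuming $G$ is non-separable, I would split according to the Three Space observation. If $G/G_0$ is non-separable, then since $G/G_0$ is an infinite totally disconnected compact group, Theorem~\ref{compacttotallydisconnected} already furnishes a quotient of $G/G_0$ homeomorphic to the Cantor set, which is an infinite separable metrizable group; composing the quotient maps $G\to G/G_0$ and $G/G_0\to(\text{Cantor quotient})$ gives the desired quotient of $G$.

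If instead $G_0$ is non-separable, the connected component $G_0$ is a non-separable connected compact group, and Theorem~\ref{compactconnected} gives a quotient of $G_0$ that is a countably infinite product of non-trivial compact finite-dimensional Lie groups, hence infinite separable metrizable. The main obstacle is that this is a quotient of $G_0$, not of $G$, and $G_0$ is merely a closed normal subgroup, so I cannot directly push the quotient map of $G_0$ up to $G$. To overcome this I would argue at the level of $\widehat{G}$, the Pontryagin dual, or invoke the structure theory of compact groups (the Sandwich Theorem and its companions, Corollary~9.25 and Theorem~9.24 of \cite{COMPBOOK}) applied to $G$ directly rather than to $G_0$: the non-separability of $G_0$ forces either the index set $J$ of simple factors to be infinite or the rank of $\widehat{G/G'}$ to be infinite, and in either case one reads off a countably infinite product of non-trivial compact Lie groups as a quotient of $G$ itself. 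This is the step where the most care is needed, since one must verify that the relevant structural invariants of $G$ (and not just of $G_0$) are genuinely infinite; everything else is routine composition of quotient maps and appeals to the three preceding theorems.
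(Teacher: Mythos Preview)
Your plan diverges substantially from the paper's route and, as written, contains two genuine gaps.

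\textbf{Gap 1: the separable case.} You correctly note that a separable compact group need not be metrizable, but your proposed fix (``reducing to Theorem~\ref{compactconnected} applied to a suitable quotient'') does not work: Theorem~\ref{compactconnected} has \emph{non-separability} as a hypothesis, so it says nothing about a separable connected compact group such as $\T^{\cont}$. Likewise ``exhibiting a non-trivial totally disconnected quotient'' is not guaranteed when $G/G_0$ is finite. This case is therefore not handled by your outline.

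\textbf{Gap 2: lifting a quotient of $G_0$ to a quotient of $G$.} You correctly identify the obstacle, but neither of your proposed remedies applies. Pontryagin duality is available only for abelian $G$, and the Sandwich Theorem (Corollary~9.25 and Theorem~9.24 of \cite{COMPBOOK}) is stated for \emph{connected} compact groups, so you cannot simply ``apply it to $G$ directly'' when $G$ is disconnected and non-abelian. The simple factors $S_j$ are invariants of $G_0$, and conjugation by $G$ permutes them; producing from an infinite $J$ a \emph{normal-in-$G$} kernel with countable-product quotient requires an argument you have not supplied. Similarly, passing to $G/G'$ is not equivalent to passing to $G_0/G_0'$ once $G$ is disconnected.

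\textbf{How the paper proceeds.} The paper avoids both issues with a single observation: every compact group is a strict projective limit of compact Lie groups (Corollary~2.43 of \cite{COMPBOOK}), so $G$ embeds in $\prod_{i\in I} L_i$ with projections $p_i$. If some $p_i(G)$ is infinite, then (by compactness) $p_i$ is an open surjection onto an infinite compact Lie group, which is separable metrizable. If every $p_i(G)$ is finite, then $G$ is profinite and Theorem~\ref{compacttotallydisconnected} applies. No separability dichotomy, no Three Space argument, and no lifting from $G_0$ are needed.
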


\begin{proof} 
By Corollary~2.43 of \cite{COMPBOOK}, $G$ is a strict projective limit of compact Lie groups. So $G$ is isomorphic as a topological group to a subgroup of a product $\prod\limits_{i\in I} L_i$, where $I$ is an index set and each $L_i$, $i\in I$, is a Lie group. Let $p_i$ be the projection 
map of $G$ into $L_i$, $i\in I$. 

Assume first that each group $p_i(G)$ is finite. Then $G$ is a compact group which is isomorphic as a topological group to a subgroup of a product of finite Lie groups; that is, $G$ is a totally disconnected compact group. The required result then follows from Theorem~\ref{compacttotallydisconnected}. 

So, assume that for some  $n\in I$, $p_n(G)$ is infinite. As $G$ is compact,  $p_n$ is a quotient mapping of $G$ onto the infinite compact metrizable (separable) group $p_n(G)$, as required. 
\end{proof}

\begin{em-remark}\label{compact}
{\rm 
Note that the Kakutani-Kodaira-Montgomery-Zippin Theorem, in the particular case of compact groups, immediately shows that $G$ has a separable metrizable (compact) quotient group. However, this quotient might be finite, and we have to spend additional effort in order to prove there is an infinite one.

Theorem \ref{SQTheoremCompact} for compact groups  is used in the proof of the more general Theorem~\ref{SQSigma-CompactLCTheorem}
dealing with $\sigma$-compact locally compact groups.
}
\end{em-remark}

Having settled the compact group case, we mention three related problems.

\begin{problem}\label{SQsigma-compact}  
\textbf{Separable Quotient Problem for  $\boldsymbol\sigma$-compact  Groups.} Does every   non-discrete $\sigma$-compact  group have a  separable quotient group which is (i) non-trivial; (ii) infinite; (iii) metrizable; (iv) infinite metrizable?
\end{problem}

\begin{problem}\label{SQprecompact}  \textbf{Separable Quotient Problem for Precompact  Groups.} Does every  infinite precompact  group have a  separable quotient group which is (i) non-trivial; (ii) infinite; (iii) non-trivial metrizable; (iv) infinite metrizable? 
\end{problem}

\begin{problem}\label{SQpseudocompact}  \textbf{Separable Quotient Problem for Pseudocompact  Groups.} Does every infinite pseudocompact  group  have a  separable quotient group which is (i) non-trivial; (ii) infinite; (iii) non-trivial metrizable; (iv) infinite metrizable?
\end{problem}

Problems \ref{SQsigma-compact} and \ref{SQpseudocompact} are answered in the next section, while  Problem \ref{SQprecompact} is answered in  Theorem~\ref{Th:Ans}.

\smallskip
We now give a positive answer to Problem~\ref{SQLCA} (i), (ii), (iii), and (iv).  First we state a proposition due to W.\,R.~Scott \cite{Scott}. (See EA1.12 of \cite{COMPBOOK} and 16.13(c) of \cite{HewittRoss}.)

\begin{prop}\label{CountableQuotientGroup} Let $A$ be an uncountable abelian group and $\aleph$ a cardinal number satisfying $\aleph_0\le \aleph < |A|$, where $|A|$ denotes the cardinality of the group $A$. Then $A$ has a subgroup $B$ such that  $|A/B|$, the cardinality of the quotient group $A/B$, equals $\aleph$. In particular, every uncountable abelian group has a quotient group which is countably infinite.
\end{prop}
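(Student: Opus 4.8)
My plan is to treat this as a purely algebraic statement and to produce, for each infinite $\aleph<\kappa:=|A|$, a subgroup $B\le A$ with $|A/B|=\aleph$; the ``in particular'' clause is then the special case $\aleph=\aleph_0$. I would split according to the decomposition $\kappa=\max(|A/tA|,|tA|)$, where $tA$ is the torsion subgroup (valid since $\kappa$ is infinite), and argue the two cases by quite different devices.

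First, in the torsion-free-heavy case, suppose the torsion-free quotient $F:=A/tA$ has cardinality $\kappa$. Since every quotient of $F$ lifts through $A\twoheadrightarrow F$ to a quotient of $A$, it suffices to work inside $F$. A torsion-free group of uncountable cardinality $\kappa$ has rank $\kappa$, so its divisible hull $V:=F\otimes_{\mathbb Z}\mathbb Q$ is a $\mathbb Q$-vector space of dimension $\kappa$ admitting a basis $\{x_\alpha:\alpha<\kappa\}$ chosen inside $F$. Fixing $S\subseteq\kappa$ with $|S|=\aleph$ (so $|\kappa\setminus S|=\kappa$), I would set $W:=\operatorname{span}_{\mathbb Q}\{x_\alpha:\alpha\notin S\}$ and $B:=F\cap W$. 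Then $F/B\cong (F+W)/W\hookrightarrow V/W$, whose dimension is $|S|=\aleph$, giving $|F/B|\le\aleph$; and the images of the $x_\alpha$ with $\alpha\in S$ are independent, so they generate a free abelian group of rank $\aleph$, forcing $|F/B|=\aleph$ exactly. This case in fact yields every infinite $\aleph\le\kappa$ at once.

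The remaining case is $|tA|=\kappa$, and this is where the genuine difficulty lies, because $tA$ presents itself as a \emph{subgroup} rather than a quotient. Writing $tA=\bigoplus_p T_p$ over the countably many primes, a short cardinal computation shows that for the given $\aleph$ there is a prime $p$ with $|T_p|>\aleph$. I would then analyse the $p$-group $P:=T_p$ through a basic subgroup $B_P$ (pure in $P$, with $P/B_P$ divisible), so that $|P|=\max(|B_P|,|P/B_P|)>\aleph$ forces at least one summand to be $\ge\aleph$. If $|B_P|\ge\aleph$, then purity of $B_P$ in $P$ and of $P$ in $A$ (the torsion subgroup and its primary components are pure) yields $\mathbb F_p$-linear embeddings $B_P/pB_P\hookrightarrow P/pP\hookrightarrow A/pA$, whence $\dim_{\mathbb F_p}(A/pA)\ge\aleph$ and $A$ surjects onto $\mathbb F_p^{(\aleph)}$, a group of cardinality $\aleph$. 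If instead $|P/B_P|\ge\aleph$, then $P/B_P\cong\mathbb Z(p^\infty)^{(\nu)}$ with $\nu$ large enough to admit a divisible quotient $Q$ of cardinality exactly $\aleph$; since $Q$ is divisible hence injective, the surjection $P\twoheadrightarrow Q$ extends to a surjection $A\twoheadrightarrow Q$.

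Throughout I would freely invoke standard facts from the theory of infinite abelian groups: the classification of divisible groups together with the equivalence divisible $\Leftrightarrow$ injective, the existence and purity of basic subgroups of $p$-groups, and the purity of $tA$ and of its primary components. The main obstacle, which the two devices in the torsion case are precisely designed to surmount, is that the large object one locates ($T_p$, or its basic subgroup) sits inside $A$ as a subgroup, not a quotient; the two escape routes are (i) transporting a \emph{vector-space} quotient across a pure embedding into $A/pA$, and (ii) exploiting injectivity of \emph{divisible} targets to extend a surjection defined only on the subgroup to all of $A$.
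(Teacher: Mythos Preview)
The paper does not actually prove this proposition: it is stated with attribution to W.\,R.~Scott and citations to \cite{Scott}, \cite{COMPBOOK}, and \cite{HewittRoss}, and is then used as a black box in the proof of Theorem~\ref{SQLCATheorem}. So there is nothing in the paper to compare your argument against.

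That said, your argument is sound. The case split via $\kappa=\max(|A/tA|,|tA|)$ is exhaustive since $\kappa$ is infinite; in the torsion-free case the rank computation and the pure-subgroup trick $B=F\cap W$ give $|F/B|=\aleph$ exactly as you claim. In the torsion case, the pigeonhole step (some $|T_p|>\aleph$ since there are only countably many primes) is correct, and the dichotomy through a basic subgroup works: purity of $B_P$ in $P$ and of $T_p$ in $A$ gives the $\mathbb F_p$-embedding $B_P/pB_P\hookrightarrow A/pA$, while injectivity of divisible groups handles the other branch. One small wording point: from $\max(|B_P|,|P/B_P|)>\aleph$ you actually get that one of the two is strictly greater than $\aleph$, which is what you use; writing ``$\ge\aleph$'' is weaker than what you have (though still sufficient for both branches). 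Your proof is more hands-on than Scott's original, which proceeds by a direct transfinite construction of an ascending chain of subgroups, but the structural route you take through torsion-free rank, basic subgroups, and injectivity of divisible groups is entirely standard and arguably more transparent.
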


\begin{thm}\label{SQLCATheorem}\textbf{(Separable Quotient Theorem for Locally Compact Abelian Groups)}\quad Let $G$ be an infinite locally compact abelian group. Then $G$ has a quotient group which is an infinite separable metrizable  group.
\end{thm}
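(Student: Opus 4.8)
The plan is to use the structure theory of locally compact abelian (LCA) groups to reduce to cases already handled. By the principal structure theorem for LCA groups, every LCA group $G$ is topologically isomorphic to $\R^n\times H$, where $n\ge 0$ is an integer and $H$ is an LCA group containing a compact open subgroup $K$. Since $G$ is infinite, at least one of the following occurs: $n\ge 1$; $H$ is infinite with $K$ infinite; or $H/K$ is infinite (so $H$ is discrete-modulo-compact with an infinite discrete quotient). I would split the argument along these lines.

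First I would dispose of the easy structural pieces. If $n\ge 1$, then $G$ has $\R$ as a quotient group, and $\R$ is an infinite separable metrizable group, so we are done. If the compact open subgroup $K$ is infinite, then $K$ is an infinite compact group, but I must produce a quotient \emph{of $G$}, not merely of a subgroup; here I would instead exploit that $K$ is a compact abelian group and pass to the connected--totally-disconnected dichotomy. The cleanest route is to observe that if $G$ is non-separable, then since separability is a Three Space Property (Theorem~1.5.23 of \cite{AT}), non-separability must be concentrated in a piece we can quotient onto a metrizable separable group via Theorem~\ref{compactabelian} or Theorem~\ref{connectedLCA}. If instead $G$ is separable, the situation is different and I treat it next.

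The genuinely new case is when $G$ is separable. Then I would use the discrete-quotient side of the structure theorem: the quotient $H/K$ is a discrete abelian group, and $G$ surjects continuously onto $H/K$ (an open quotient map, since $K$ is open). If $H/K$ is infinite, it is an infinite discrete abelian group; if it is moreover uncountable, Proposition~\ref{CountableQuotientGroup} yields a countably infinite quotient, which is discrete hence separable metrizable, and composing quotient maps finishes the case. If $H/K$ is countably infinite it is already an infinite discrete (hence separable metrizable) quotient. The remaining possibility is that $H/K$ is finite, forcing $K$ (and hence $G$) to be infinite and compact-mod-finite; then $G$ has an infinite compact quotient and Theorem~\ref{SQTheoremCompact} (the Separable Quotient Theorem for Compact Groups) applies to produce an infinite separable metrizable compact quotient.

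The main obstacle I anticipate is bookkeeping the reduction so that every case genuinely yields a quotient of $G$ itself and that the quotient is \emph{infinite} (not just non-trivial), since finite separable metrizable quotients are useless here; this is exactly the subtlety flagged in Remark~\ref{compact}. The key technical points are that the projection $G\to H/K$ is an open continuous homomorphism (so its image is a quotient group), that Proposition~\ref{CountableQuotientGroup} converts any infinite discrete abelian quotient into a countably infinite one, and that Theorem~\ref{SQTheoremCompact} and Theorem~\ref{connectedLCA} handle the compact and connected portions. Once the structure theorem splits $G$ as $\R^n\times H$ and $H$ as compact-open-by-discrete, each branch routes to one of these three already-proved results, and no delicate estimates remain.
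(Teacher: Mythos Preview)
Your core argument is correct and is essentially the paper's proof: reduce via the structure theorem, dispose of the $\R^n$ factor by projecting onto $\R$, then analyze the discrete quotient $H/K$ and invoke Proposition~\ref{CountableQuotientGroup} when it is infinite and Theorem~\ref{SQTheoremCompact} when it is finite (so that $G$ is compact). The paper organizes the same three moves slightly differently, writing $G$ as having an open subgroup $\R^n\times K$ and splitting on whether $G/(\R^n\times K)$ is trivial, infinite, or finite nontrivial (in the last case invoking the compactly-generated form $\R^n\times\Z^m\times C$), but the substance is identical.

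The one genuine defect is your second paragraph. The separable/non-separable dichotomy you introduce there is both unnecessary and incomplete: in the non-separable branch you gesture at Theorems~\ref{compactabelian} and~\ref{connectedLCA}, but those require $G$ to be compact or connected, and a general non-separable LCA group need be neither, so that branch is not actually handled. Fortunately nothing in your third paragraph uses separability of $G$; the analysis of $H/K$ works verbatim for every $G$ with $n=0$. Simply delete the separability split and the aborted ``$K$ infinite'' discussion, and run the third-paragraph argument directly after disposing of $n\ge 1$. What remains is a clean proof matching the paper's.
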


\begin{proof} By the Principal Structure Theorem for Locally Compact Abelian Groups, Theorem~26 
of \cite{Mor77}, $G$ has an open subgroup $H$ isomorphic as a topological group to $\R^n\times K$, where $n$ is a non-negative integer and $K$ is a compact abelian group.

Case 1. $G=H$. Then the theorem follows from Theorem~\ref{SQTheoremCompact}.

Case 2. $G/H$ is an infinite discrete abelian group. Then by Proposition~\ref{CountableQuotientGroup} $G/H$, and hence also $G$,  
has a quotient group which is countably infinite and discrete.

Case 3. $G/H$ is finite. Then $G$ is a compactly-generated locally compact abelian group. By Exercise Set~14 \#3 of \cite{Mor77}, $G$ is isomorphic as a topological group to $\R^n\times \Z^m \times C$, where $n$ and $m$ are non-negative integers and $C$ is a compact abelian group.  The required result then follows from Theorem~\ref{SQTheoremCompact}.
\end{proof}

Recall that a \emph{proto-Lie group} is defined in  \cite[Definition 3.25]{PROBOOK} to be a  topological group $G$ for which every neighborhood of the identity contains a closed normal subgroup $N$ such that the quotient group $G/N$ is a Lie group.  If $G$ is also a complete topological group, then it is said to be a \emph{pro-Lie group}. If $G$ is a proto-Lie group (respectively, pro-Lie group) with all the quotient Lie groups $G/N$ discrete then $G$ is said to be \emph{protodiscrete} (respectively, \emph{prodiscrete}). It is immediately clear that if $G$ is a proto-Lie group which is not a Lie group, then it is not topologically simple.

\begin{thm}\label{SQPro-LieGroupTheorem} \textbf{(Separable Quotient Theorem for Proto-Lie Groups)}\quad Let $G$ be an infinite proto-Lie group which is not protodiscrete; that is, $G$ is not totally disconnected. Then $G$ has a quotient group which is an infinite separable metrizable  (Lie) group.
\end{thm}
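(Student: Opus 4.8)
The plan is to split the argument into two parts: first extract from $G$ a non-discrete Lie group quotient, and then replace that quotient by a separable one.

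For the first part I would argue directly from the hypothesis that $G$ is not protodiscrete. Being protodiscrete means precisely that every neighbourhood of the identity contains a closed normal subgroup $N$ with $G/N$ a \emph{discrete} Lie group. So if $G$ is not protodiscrete, there is a neighbourhood $U$ of the identity containing no such $N$; yet, by the proto-Lie property itself, $U$ does contain some closed normal $N$ with $G/N$ a Lie group. That Lie group $L := G/N$ cannot be discrete, hence $\dim L = d \ge 1$. (This is exactly the promised translation of ``not protodiscrete $=$ not totally disconnected'': were every Lie quotient discrete, each witnessing $N$ would be open, the $N$'s would form a neighbourhood basis of open subgroups at the identity, and then $G_0$ would lie in their intersection $\{e\}$.)

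For the second part, observe that $L$ is automatically metrizable and infinite, since its identity component $L_0$ is a positive-dimensional connected Lie group and is therefore second countable, hence itself infinite, separable and metrizable. Consequently $L$ is the desired quotient as soon as it has only countably many connected components. The remaining, and principal, task is thus a purely Lie-theoretic statement: every finite-dimensional Lie group of positive dimension has an infinite separable metrizable quotient group. I would prove this by analysing the discrete component group $D := L/L_0$. When $D$ admits a countably infinite quotient --- which, via Proposition~\ref{CountableQuotientGroup}, is available whenever one can locate a suitable abelian section of $D$ --- pulling that quotient back along $L \to D$ produces a closed normal $M \supseteq L_0$ with $L/M$ a countably infinite discrete group, which is infinite separable metrizable.

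The hard part is the complementary case, in which $D$ has \emph{no} countably infinite quotient (for instance when $D$ is an uncountable simple group, as in Example~\ref{Discrete}); then the components cannot be collapsed from within $D$ alone. The plan here is to exploit the conjugation action of $L$ on the Lie algebra of $L_0$: because $\operatorname{Aut}(L_0)$ is itself a finite-dimensional, hence essentially linear, Lie group, the induced map $D \to \operatorname{Out}(L_0)$ is severely constrained (an uncountable torsion simple group, for example, cannot inject into a linear group), and a case analysis should force the outer action to be trivial, so that the centraliser $C_L(L_0)$ is a closed normal subgroup of $L$ surjecting onto $D$; quotienting by it yields the connected Lie group $L_0/Z(L_0)$ when $L_0$ is non-abelian, with the abelian-$L_0$ case treated separately through the resulting central extension. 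I expect this uncountable-component situation to be the main obstacle, and it genuinely cannot be sidestepped by seeking a connected quotient: the group $\R \rtimes \R_d$, where $\R_d$ is the reals made discrete acting on $\R$ by scaling, is an infinite proto-Lie group that is not totally disconnected, whose only non-discrete Lie quotient is itself and which has no non-trivial connected quotient whatsoever --- its sole infinite separable metrizable quotients are the countably infinite discrete groups obtained by collapsing the abelian component group $\R_d$.
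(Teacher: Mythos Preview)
Your first paragraph is exactly the paper's argument: from ``not protodiscrete'' you extract a closed normal $N$ with $L=G/N$ a non-discrete Lie group. The paper's proof then ends in a single clause: ``a non-discrete Lie group \dots\ is infinite, separable and metrizable.'' That is the whole of Case~2.

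Everything after your first paragraph is thus superfluous under the convention the paper is tacitly using, namely that a finite-dimensional real Lie group is modelled on a second-countable manifold. With that convention a non-discrete Lie group is automatically second countable, hence separable and metrizable, and (being non-discrete Hausdorff) infinite; there is no ``uncountably many components'' case to treat. Your example $\R\rtimes\R_d$ is, under this convention, not a Lie group (it has $\cont$ components and fails second countability), and since every small identity neighbourhood in it contains only the trivial subgroup, it is not a proto-Lie group either.

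If instead you insist on the broader notion of Lie group that allows arbitrary component sets (so that your example \emph{is} proto-Lie), then you have put your finger on a genuine subtlety that the paper's one-line Case~2 glosses over. But your proposal does not close the gap: the ``hard case'' where $D=L/L_0$ admits no countably infinite quotient is handled only by a sketch---you hope to force the outer action of $D$ on $L_0$ to be trivial and then quotient by the centraliser, but you neither prove triviality in general nor carry out the abelian-$L_0$ subcase, and you yourself flag this as ``the main obstacle.'' So under the broader reading your argument is incomplete, while under the paper's reading it is unnecessary.
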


\begin{proof}
By definition, $G$ has a quotient group $G/N$ which is a Lie group $L$.  

Case~1. All such $L$ are discrete.  Then $G$ is a subgroup of a product of  discrete groups; that is $G$ is a protodiscrete group. This contradicts the assumption in the theorem.

Case~2. At least one such $L$  is not discrete.  Then $G$ has a quotient group which is a non-discrete Lie group and so is infinite separable and metrizable. 
\end{proof}

\begin{thm}\label{SQSigma-CompactPro-LieGroupTheorem}\hspace{3pt}\textbf{(Separable Quotient Theorem for $\boldsymbol \sigma$-compact Pro-Lie Groups)}\quad Let $G$ be an infinite $\sigma$-compact pro-Lie group. Then $G$ has a quotient group which is an infinite separable metrizable  group.
\end{thm}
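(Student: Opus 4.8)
The plan is to split on whether $G$ is totally disconnected, reducing each case to a theorem already established in this section.

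First I would dispose of the case where $G$ is \emph{not} totally disconnected. Being a pro-Lie group, $G$ is in particular an infinite proto-Lie group, and not being totally disconnected it is not protodiscrete (this is the identification recorded in the statement of Theorem~\ref{SQPro-LieGroupTheorem}). Hence Theorem~\ref{SQPro-LieGroupTheorem} applies directly and yields a quotient group which is an infinite separable metrizable Lie group. Observe that this case uses neither $\sigma$-compactness nor completeness; the whole force of the present hypotheses will be felt only in the totally disconnected case.

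So suppose $G$ is totally disconnected; equivalently $G$ is protodiscrete, meaning that every neighborhood of the identity contains a closed normal subgroup $N$ with $G/N$ discrete, and the family $\mathcal N$ of all such $N$ is a neighborhood basis at the identity consisting of open normal subgroups. I would now argue by a dichotomy on the cardinalities $|G/N|$. If some $N\in\mathcal N$ has $G/N$ infinite, then, since the quotient map is a continuous surjection and $G$ is $\sigma$-compact, $G/N$ is a $\sigma$-compact discrete group, hence countable; being infinite, countable and discrete, $G/N$ is already an infinite separable metrizable group, as required. If instead every $N\in\mathcal N$ has $G/N$ finite, then $G$ is a projective limit of finite discrete groups: the canonical map $G\to\varprojlim_{N\in\mathcal N}G/N$ is an injective topological embedding with dense image (injectivity because $\bigcap_{N\in\mathcal N}N=\{e\}$ by the Hausdorff hypothesis, the embedding and density because $\mathcal N$ is a neighborhood basis at the identity), and since $G$ is complete, being a pro-Lie group, its image is closed in, hence equal to, the limit. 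As $\varprojlim_{N}G/N$ is closed in the compact product $\prod_{N}G/N$, this shows $G$ is an infinite totally disconnected compact group, whereupon Theorem~\ref{compacttotallydisconnected} supplies a quotient homeomorphic to the Cantor set, which is an infinite separable metrizable group.

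The step I expect to be the main obstacle, or at least the one requiring genuine care rather than routine bookkeeping, is the profinite identification in the second subcase: it is precisely here that completeness of the pro-Lie group $G$ is indispensable, since a merely proto-Lie group satisfying the same hypotheses could be a proper dense subgroup of its profinite completion and so fail to be compact. The remaining ingredients—the reduction through Theorem~\ref{SQPro-LieGroupTheorem} and the elementary fact that a $\sigma$-compact discrete group is countable and therefore separable—are straightforward.
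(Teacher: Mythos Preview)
Your proof is correct and follows essentially the same route as the paper: split on whether $G$ is protodiscrete (equivalently, totally disconnected), invoke Theorem~\ref{SQPro-LieGroupTheorem} in the non-protodiscrete case, and in the protodiscrete case split again on whether some discrete quotient $G/N$ is infinite (then $\sigma$-compactness makes it countable) or all are finite (then $G$ is profinite, hence compact). The only cosmetic differences are that you spell out the projective-limit argument for compactness where the paper simply asserts it, and you finish with Theorem~\ref{compacttotallydisconnected} rather than the slightly more general Theorem~\ref{SQTheoremCompact}; both are fine since $G$ is totally disconnected in that branch.
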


\begin{proof} If $G$ is not prodiscrete, the theorem follows immediately from 
Theorem~\ref{SQPro-LieGroupTheorem}. 

So consider the case that $G$ is prodiscrete. Whenever $N$ is a closed  normal subgroup  such that $G/N$ is discrete, the group $G/N$ must also be $\sigma$-compact and therefore countable. If it is countably infinite, then we have immediately that $G$ has an infinite separable metrizable quotient group. If each such $G/N$ is finite, then $G$ is compact (and totally disconnected) and the theorem follows from  Theorem~\ref{SQTheoremCompact}.
\end{proof}

Another significant generalization of \ref{SQLCATheorem} is 
Theorem~\ref{SQAbelianPro-LieGroupTheorem}.

\begin{thm}\label{SQAbelianPro-LieGroupTheorem} \textbf{(Separable Quotient Theorem for Abelian Pro-Lie Groups)}\quad Let $G$ be an infinite abelian pro-Lie group. Then $G$ has a quotient group which is an infinite separable metrizable  group.
\end{thm}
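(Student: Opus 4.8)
The plan is to use the structure theory of abelian pro-Lie groups to reduce to cases already handled. By the Structure Theorem for Abelian Pro-Lie Groups (Hofmann--Morris, \cite{PROBOOK}), an abelian pro-Lie group $G$ is isomorphic as a topological group to $\R^n \times H$ for some non-negative integer $n$ and some abelian pro-Lie group $H$ containing a compact open subgroup. Thus if $n\ge 1$, then $G$ has $\R$ as a quotient group, which is infinite separable and metrizable, and we are done. So the essential case is when $G$ itself contains a compact open subgroup $K$.

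First I would dispose of the case $n\ge 1$ as above, and then focus on a group $G$ with a compact open subgroup $K$. Here the quotient $G/K$ is discrete and abelian. If $G/K$ is infinite, then it is an infinite discrete abelian group, and by Proposition~\ref{CountableQuotientGroup} it (and hence $G$) has a countably infinite discrete quotient group, which is infinite separable and metrizable. If $G/K$ is finite, then $K$ has finite index in $G$, so $G$ is itself compact; the conclusion then follows directly from Theorem~\ref{SQTheoremCompact}, the Separable Quotient Theorem for Compact Groups. In either sub-case we produce the required quotient.

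The main obstacle is establishing the correct structural decomposition and checking that the compact open subgroup $K$ is genuinely available in the relevant case. I would need to cite the precise form of the structure theorem for abelian pro-Lie groups from \cite{PROBOOK} that yields the splitting $G\cong\R^n\times H$ with $H$ possessing a compact open subgroup; the vector-group factor $\R^n$ must be split off as a direct topological factor so that projection onto it gives an honest quotient map. Once that decomposition is in hand, every remaining step reuses results already proved in the excerpt (Proposition~\ref{CountableQuotientGroup} for the discrete quotient and Theorem~\ref{SQTheoremCompact} for the compact quotient), so no new delicate argument is required beyond the structural input.

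I should double-check one subtlety: an infinite abelian pro-Lie group with a compact open subgroup could in principle have $G/K$ finite while $K$ is infinite, in which case $G$ is infinite and compact, and this is exactly what feeds Theorem~\ref{SQTheoremCompact}. Conversely $G/K$ infinite handles the case where the discrete part carries the infinitude. Since $G$ is assumed infinite, at least one of $K$ or $G/K$ is infinite, so the two sub-cases are exhaustive. Hence the decomposition into the three cases ($n\ge 1$; $n=0$ with $G/K$ infinite; $n=0$ with $G/K$ finite) is complete, and each yields an infinite separable metrizable quotient.
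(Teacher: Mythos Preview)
Your approach is sound in outline but contains a misstatement of the structure theorem: for a general abelian pro-Lie group the vector-group factor is a \emph{weakly complete} vector group $\R^I$ for an arbitrary index set $I$, not $\R^n$ with $n$ finite (e.g.\ $G=\R^\omega$ is already an abelian pro-Lie group with no finite-dimensional splitting). This does not damage the argument---if $I\neq\emptyset$ you still project onto a single copy of $\R$---but the citation from \cite{PROBOOK} must be stated correctly.

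Modulo that fix, your route is genuinely different from the paper's. The paper avoids the structure theorem entirely and works straight from the definition of a pro-Lie group: $G$ has arbitrarily small closed normal $N$ with $G/N$ a Lie group. If some $G/N$ is non-discrete, that Lie quotient is already infinite separable metrizable (Theorem~\ref{SQPro-LieGroupTheorem}). If all $G/N$ are discrete, then either some $G/N$ is an infinite discrete abelian group, to which Theorem~\ref{SQLCATheorem} applies, or all are finite, in which case $G$ is profinite and hence compact, and Theorem~\ref{SQTheoremCompact} applies. Your version trades this case split for a single structural citation and then reuses Proposition~\ref{CountableQuotientGroup} and Theorem~\ref{SQTheoremCompact} in the same way; the paper's argument requires less external machinery but is otherwise parallel in its terminal cases.
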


\begin{proof} 
We modify Case~1 in the proof of Theorem~\ref{SQPro-LieGroupTheorem}. If all such $L$ are discrete then we have one of the two cases:\smallskip

Case (i) All such $L_i$ are finite. Then $G$ is isomorphic as a topological group to a closed subgroup of a product of finite discrete groups. So $G$ is compact. Thus by Theorem \ref{SQTheoremCompact}, $G$ has a quotient group which is an infinite separable metrizable group.

Case (ii) At least one such $L$ is infinite and discrete. So $G$ has a quotient group {which} is an infinite discrete group. Thus by Theorem \ref{SQLCATheorem}, $G$ has a quotient group {which} is an infinite separable metrizable group, {which} completes the proof of the theorem. 
\end{proof}

The next theorem, {which} generalizes Theorem~\ref{SQTheoremCompact}, provides a partial but significant answer to Problem~\ref{SQLC}.

\begin{thm}\label{SQSigma-CompactLCTheorem} \textbf{(Separable Quotient Theorem for 
$\boldsymbol{\sigma}$-compact Locally Compact Groups)} Every infinite $\sigma$-compact locally compact group has a quotient group which is an infinite separable metrizable group. 
\end{thm}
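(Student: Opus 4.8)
The plan is to reduce the general $\sigma$-compact locally compact case to the compact case already settled in Theorem~\ref{SQTheoremCompact}, using the classical Kakutani--Kodaira theorem (the result alluded to in Remark~\ref{compact}) as the bridge.

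First I would invoke the Kakutani--Kodaira theorem: since $G$ is locally compact and $\sigma$-compact, there is a compact normal subgroup $K$ of $G$ such that the quotient group $G/K$ is metrizable. As a quotient of a locally compact group by a closed normal subgroup, $G/K$ is again a locally compact (Hausdorff) topological group, and as a continuous image of the $\sigma$-compact group $G$ it is $\sigma$-compact. A $\sigma$-compact metrizable space is separable, being a countable union of separable compact subspaces. Hence $G/K$ is a separable metrizable group.

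Then I would split into two cases according to the cardinality of $G/K$. If $G/K$ is infinite, it is itself the desired infinite separable metrizable quotient and there is nothing more to prove. If $G/K$ is finite, then $G$ is a finite union of cosets $gK$, each homeomorphic to the compact set $K$ via left translation, so $G$ is a finite union of compact sets and therefore compact. Since $G$ is infinite and compact, Theorem~\ref{SQTheoremCompact} produces an infinite separable metrizable quotient of $G$, which completes the argument.

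The one point that looks like it could cause trouble is the finite-quotient case, where one might worry that $G/K$ simply fails to be infinite and the Kakutani--Kodaira reduction stalls; the decisive observation is that a finite quotient by a \emph{compact} subgroup forces $G$ to be compact, so Theorem~\ref{SQTheoremCompact} takes over exactly when the direct quotient is too small. Thus the only ingredient one must import from outside the paper is the Kakutani--Kodaira theorem itself; once that is granted, the remaining steps are routine and the proof is short.
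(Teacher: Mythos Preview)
Your proof is correct and follows essentially the same route as the paper: invoke Kakutani--Kodaira to obtain a compact normal $K$ with $G/K$ separable metrizable, finish immediately if $G/K$ is infinite, and otherwise conclude that $G$ is compact and apply Theorem~\ref{SQTheoremCompact}. The only cosmetic differences are that the paper cites the theorem in a form already yielding separability of $G/K$, and it deduces compactness of $G$ in the finite-quotient case via the Three Space Property rather than your (equally valid, arguably more direct) finite-union-of-cosets argument.
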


\begin{proof} 
By the Kakutani-Kodaira-Montgomery-Zippin Theorem, every $\sigma$-compact locally compact group, $G$, has a compact normal subgroup, $K$, such that $G/K$ is a separable metrizable group. (See Theorem~2 of \cite{Hu} for a more general statement and elegant proof.)

Case~1.  $G/K$ is finite. Then $G$ must be compact as compactness is a Three Space Property by Theorem~5.25 of \cite{HewittRoss}. The required result then follows from 
Theorem~\ref{SQTheoremCompact}.

Case~2. $G/K$ is infinite. Then $G$ has a quotient group which is infinite separable and metrizable, as required. 
\end{proof}

Pierre-Emmanuel Caprace  studied  the class $\mathscr S$ 
of all non-discrete compactly-generated locally compact groups 
that are topologically simple.  He writes:
 {\it \lq\lq{Simple Lie groups and simple algebraic groups over local fields are the most prominent members of the class $\mathscr S$\rq\rq}}
\cite{Capracepreprint}.

As an immediate corollary of Theorem~\ref{SQSigma-CompactLCTheorem}, we observe the following apparently new result which reveals the topological structure of members of the class $\mathscr S$.

\begin{prop}\label{Class_S} Every topological group in the class $\mathscr S$ is a separable metrizable group.
\end{prop}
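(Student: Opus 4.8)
The plan is to apply Theorem~\ref{SQSigma-CompactLCTheorem} directly to members of the class $\mathscr S$ and then observe that, for a topologically simple group, the existence of any non-trivial quotient forces that quotient to be the whole group. First I would note that every group $G$ in $\mathscr S$ is by definition non-discrete, compactly-generated, and locally compact. The key preliminary observation is that a compactly-generated locally compact group is automatically $\sigma$-compact: if $U$ is a relatively compact open symmetric neighborhood of the identity that (together with a compact generating set) generates $G$, then $G=\bigcup_{n\in\N} (\ovl{U})^n$, a countable union of compact sets. Moreover, a non-discrete topologically simple group cannot be finite (a finite group with the discrete topology is discrete), so $G$ is infinite.

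Having established that $G$ is an infinite $\sigma$-compact locally compact group, Theorem~\ref{SQSigma-CompactLCTheorem} supplies a quotient group $G/N$ (for some closed normal subgroup $N$) which is infinite, separable, and metrizable. Since $G/N$ is infinite it is in particular non-trivial, so $N\ne G$. Now the topological simplicity of $G$ enters: the only closed normal subgroups of $G$ are $\{e\}$ and $G$ itself. Since $N$ is a closed normal subgroup with $N\ne G$, we must have $N=\{e\}$. Therefore the quotient map $G\to G/N$ is an isomorphism of topological groups, and $G\cong G/N$ is itself separable and metrizable.

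The argument is short, and the only point requiring care is verifying that the hypotheses of Theorem~\ref{SQSigma-CompactLCTheorem} genuinely apply, i.e.\ that $G$ is $\sigma$-compact and infinite. The $\sigma$-compactness step is the main thing to get right: one must invoke that compact generation plus local compactness yields $\sigma$-compactness via the $\bigcup_n (\ovl U)^n$ representation. Everything else is a formal consequence of topological simplicity, since the quotient produced is non-trivial and the corresponding normal subgroup is closed, forcing it to be trivial and hence the quotient to be (isomorphic to) $G$ itself.

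I do not anticipate a serious obstacle here; the whole content is that Theorem~\ref{SQSigma-CompactLCTheorem} is exactly strong enough to conclude, once one knows that the defining properties of $\mathscr S$ imply infiniteness and $\sigma$-compactness. The statement is thus best read as a clean structural corollary: membership in Caprace's class $\mathscr S$ already pins down the group, up to the separable metrizable constraint, rather than requiring any further analysis of the simple group's internal structure.
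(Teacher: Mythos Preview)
Your argument is correct and follows essentially the same route as the paper's proof: you observe that compactly-generated locally compact groups are $\sigma$-compact, apply Theorem~\ref{SQSigma-CompactLCTheorem} to obtain an infinite separable metrizable quotient, and then use topological simplicity to conclude that this quotient must be $G$ itself. The paper's version is terser (it omits the verification of $\sigma$-compactness and infiniteness that you spell out), but the logic is identical.
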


\begin{proof} 
A quotient group of any $G \in \mathscr S$ is trivial or it is $G$ itself. But every 
compactly-generated topological group is $\sigma$-compact, and therefore by Theorem~\ref{SQSigma-CompactLCTheorem}, every $G \in \mathscr S$ has a quotient 
group which is an infinite separable metrizable group.  So $G$ itself is an infinite separable 
metrizable group.
\end{proof}

Recall that a topological group $G$ is said to be \emph{almost connected} if the quotient group $G/G_0$ is compact, where $G_0$ denotes the connected component of the identity of $G$. As every almost connected locally compact group is $\sigma$-compact, we immediately obtain the following corollary.

\begin{corollary}\label{SQAlmostConnectedTheorem} \textbf{(Separable Quotient Theorem for Almost Connected Locally Compact Groups)}\quad  Every infinite almost connected  locally compact group has a quotient group which is an infinite separable metrizable group. 
\end{corollary}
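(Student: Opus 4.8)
The plan is to deduce Corollary~\ref{SQAlmostConnectedTheorem} directly from Theorem~\ref{SQSigma-CompactLCTheorem} by showing that the hypothesis of the corollary implies the hypothesis of the theorem. Concretely, I would first observe that an almost connected locally compact group $G$ is automatically $\sigma$-compact, and then simply invoke Theorem~\ref{SQSigma-CompactLCTheorem} to obtain the desired infinite separable metrizable quotient. Thus essentially all the content reduces to the single implication \textit{almost connected and locally compact} $\Rightarrow$ \textit{$\sigma$-compact}, and the corollary's proof is then a one-line appeal to the previously established theorem.

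To carry out the reduction I would argue as follows. Let $G_0$ denote the connected component of the identity, which is a closed (normal) subgroup of $G$. Since $G$ is almost connected, the quotient $G/G_0$ is compact. A connected locally compact group is $\sigma$-compact: indeed, fixing a compact symmetric neighborhood $V$ of the identity, the subgroup $\bigcup_{n\in\N} V^n$ generated by $V$ is open, hence closed, and therefore equal to $G_0$ by connectedness, exhibiting $G_0$ as a countable union of the compact sets $V^n$. Hence $G_0$ is $\sigma$-compact. Now $G$ is an extension of the compact (hence $\sigma$-compact) group $G/G_0$ by the $\sigma$-compact group $G_0$; choosing a countable family of compact sets covering $G/G_0$ and lifting through the open quotient map, together with a countable compact cover of $G_0$, yields a countable compact cover of $G$. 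Therefore $G$ is $\sigma$-compact.

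With $\sigma$-compactness in hand, and since $G$ is by assumption infinite and locally compact, Theorem~\ref{SQSigma-CompactLCTheorem} applies verbatim and produces a quotient group of $G$ that is infinite, separable, and metrizable, which is exactly the assertion of the corollary.

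I expect no serious obstacle here: the only step requiring any care is the verification that almost connectedness forces $\sigma$-compactness, and even that is a standard fact about locally compact groups that the authors flag in the sentence immediately preceding the corollary (\lq\lq As every almost connected locally compact group is $\sigma$-compact\rq\rq). The main thing to get right is the combination of the two covers across the quotient extension, for which one uses that the quotient map $G\to G/G_0$ is open; beyond that the result is immediate from Theorem~\ref{SQSigma-CompactLCTheorem}.
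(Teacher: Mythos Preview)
Your proposal is correct and follows exactly the paper's approach: the paper states the corollary as an immediate consequence of Theorem~\ref{SQSigma-CompactLCTheorem}, noting just before it that every almost connected locally compact group is $\sigma$-compact. Your write-up merely fills in the (standard) details of that $\sigma$-compactness claim, which the paper leaves implicit.
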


The class of \emph{$\R$-factorizable} groups (see \cite[Chapter~8]{AT}) contains all pseudocompact groups as well as $\sigma$-compact groups. We note that by Theorem~8.1.9 of \cite{AT}, a locally compact group is $\R$-factorizable if and only if it is $\sigma$-compact. This suggests the following problem which will be answered in Section~\ref{Sec:2}.

\begin{problem}\label{SQRfactorizable} \textbf{Separable Quotient Problem for $\boldsymbol \R$-factorizable  Groups.} Does every $\R$-factorizable group have a separable quotient group which is (i) non-trivial; (ii) infinite; (iii) metrizable; (iv) infinite metrizable?
\end{problem}

\section{$\sigma$-compact groups, Lindel\"of $\Sigma$-groups and pseudocompact groups}

\noindent Here we give the positive answer to (i) and (ii) of Problem~\ref{SQsigma-compact} 
for the wider class of \emph{Lindel\"of $\Sigma$-groups} and then answer (iii) and (iv) of Problem~\ref{SQsigma-compact} in the negative.
Recall that the class of Lindel\"of $\Sigma$-groups 
contains all $\sigma$-compact and all separable metrizable topological groups, and is closed with respect to countable products, closed subgroups and continuous homomorphic images (see \cite[Section~5.3]{AT}).

\smallskip
 Proposition~\ref{Prop:M1} answers (i) and (ii) of Problem~\ref{SQsigma-compact} 
in a stronger form. First we present an important fact about Lindel\"of $\Sigma$-groups
(see \cite[Lemma~5.3.24]{AT}).

\begin{lemma}\label{Le:Usp}
Let $G$ be a Lindel\"of $\Sigma$-group. Then
\begin{enumerate}
\item[{\rm (a)}] for every closed normal subgroup $N$ of type $G_\delta$ in $G$, 
the quotient group $G/N$ has a countable network;
\item[{\rm (b)}] the sets of the form $\pi^{-1}(V)$ constitute a base of $G$, where $\pi\colon G\to K$ 
is an open continuous homomorphism onto a topological group $K$ with a countable network 
and $V$ open in $K$.
\end{enumerate}
\end{lemma}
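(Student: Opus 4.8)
The plan is to prove (a) and (b) separately, deriving (b) from (a) together with the structure theory of $\omega$-narrow groups, and throughout I would use freely the stated closure properties of the class of Lindel\"of $\Sigma$-groups.

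For part (a), I would first observe that, since the quotient homomorphism $q\colon G\to G/N$ is continuous and surjective, the group $G/N$ is again a Lindel\"of $\Sigma$-group, by closure of this class under continuous homomorphic images. Next I would invoke the standard fact that a closed subgroup $N$ is of type $G_\delta$ in a topological group if and only if the coset space $G/N$ has countable pseudocharacter; in particular the identity $e_{G/N}$ is a $G_\delta$-point of $G/N$. Applying the continuous map $(u,v)\mapsto u^{-1}v$ from $(G/N)\times(G/N)$ to $G/N$ and taking the preimage of the $G_\delta$-point $e_{G/N}$, I would conclude that the diagonal of $G/N$ is a $G_\delta$-subset of $(G/N)^2$. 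At this point the conclusion follows from the classical theorem that every Lindel\"of $\Sigma$-space with a $G_\delta$-diagonal has a countable network; this is precisely why the statement asserts a countable network and not outright second countability.

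For part (b), the point is that $G$, being Lindel\"of $\Sigma$, is Lindel\"of and hence $\omega$-narrow, so by Guran's theorem the neighbourhoods of the identity of the form $\phi^{-1}(\widetilde V)$ form a base at $e$, where $\phi\colon G\to M$ is a continuous homomorphism onto a second-countable group $M$ and $\widetilde V$ is a neighbourhood of $e_M$. Given an open $W\subseteq G$ and a point $x\in W$, I would set $U=x^{-1}W$ and choose such $\phi$ and $\widetilde V$ with $\phi^{-1}(\widetilde V)\subseteq U$. Putting $N=\ker\phi$, and using that $\{e_M\}$ is a $G_\delta$-set in the metrizable group $M$, the subgroup $N$ is closed, normal, and of type $G_\delta$ in $G$; hence the quotient map $\pi=q\colon G\to G/N=:K$ is an open continuous surjective homomorphism and, by part (a), $K$ has a countable network. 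Finally the set $O=x\,\phi^{-1}(\widetilde V)$ is open, contains $x$, is $N$-saturated (since $\phi^{-1}(\widetilde V)$ is a union of cosets of $N=\ker\phi$), and satisfies $O\subseteq xU=W$; therefore $V:=\pi(O)$ is open in $K$ with $\pi^{-1}(V)=O$, giving $x\in\pi^{-1}(V)\subseteq W$. This exhibits the required base.

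The main obstacle is the purely topological input used in (a), namely the theorem that a Lindel\"of $\Sigma$-space with a $G_\delta$-diagonal has a countable network; everything else is bookkeeping with the group structure (the passage from $G_\delta$-subgroups to $G_\delta$-diagonals, and the extraction of small $G_\delta$-kernels from Guran's theorem). I would therefore either cite this network theorem directly or, for a self-contained treatment, reconstruct it from the description of a Lindel\"of $\Sigma$-space as one carrying a countable network modulo a compact cover, using the $G_\delta$-diagonal to refine that network into a genuine countable one.
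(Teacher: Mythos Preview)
The paper does not supply its own proof of this lemma: it simply records the statement and refers the reader to \cite[Lemma~5.3.24]{AT}. So there is no in-paper argument to compare against, and your task reduces to whether your sketch is a valid proof of the cited fact.

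Your argument is correct. For (a), the chain ``$N$ is $G_\delta$ $\Rightarrow$ $G/N$ has countable pseudocharacter $\Rightarrow$ $G/N$ has a $G_\delta$-diagonal (via $(u,v)\mapsto u^{-1}v$) $\Rightarrow$ $G/N$ has a countable network'' is sound; the last implication is the classical result that a Lindel\"of $\Sigma$-space with a $G_\delta$-diagonal is a $\sigma$-space and hence, being Lindel\"of, has a countable network. For (b), your use of Guran's theorem is fine once you note (as you implicitly do) that replacing $M$ by $\phi(G)$ keeps $M$ second countable; the homomorphism $\phi$ need not be open, but you correctly sidestep this by passing to the genuine quotient $\pi\colon G\to G/\ker\phi$, which \emph{is} open, and using that $\phi^{-1}(\widetilde V)$ is $\ker\phi$-saturated so that $\pi^{-1}(\pi(\phi^{-1}(\widetilde V)))=\phi^{-1}(\widetilde V)$. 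This is exactly the standard route taken in \cite{AT}, so your proposal matches the intended argument.
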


\begin{prop}\label{Prop:M1} {\bf (Separable Quotient Theorem for Lindel\"of $\Sigma$-groups)}
Let $G$ be an infinite Lindel\"of $\Sigma$-group. Then $G$ has a quotient group which is infinite and separable. Indeed, the topology of $G$ is initial with
respect to the family of quotient homomorphisms onto infinite groups with a countable 
network. 
\end{prop}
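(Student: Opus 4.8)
The plan is to derive both assertions from \emph{Lemma~\ref{Le:Usp}} by isolating a single combinatorial claim. Write $\mathcal N$ for the family of all closed normal subgroups $N$ of $G$ of type $G_\delta$, and let $\pi_N\colon G\to G/N$ be the corresponding quotient homomorphism. I record the features of $\mathcal N$ that I intend to use: (1) $\mathcal N$ is closed under countable intersections, since a countable intersection of $G_\delta$-sets is again $G_\delta$ and an intersection of closed normal subgroups is closed normal; (2) for each $N\in\mathcal N$ the group $G/N$ has a countable network by part~(a) of Lemma~\ref{Le:Usp}, hence is separable; (3) by part~(b) the sets $\pi_N^{-1}(V)$ with $N\in\mathcal N$ and $V$ open in $G/N$ form a base of $G$, and consequently $\bigcap_{N\in\mathcal N}N=\{e\}$, because a point $g\neq e$ can be separated from $e$ by such a basic set. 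Note also that $G\in\mathcal N$ with $G/G$ trivial.

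The whole proposition will follow from the following \emph{Key Claim}: for every $N_0\in\mathcal N$ with $G/N_0$ finite there is $N''\in\mathcal N$ with $N''\subseteq N_0$ and $G/N''$ infinite. I would prove this by contradiction, assuming $G/N'$ is finite for \emph{every} $N'\in\mathcal N$ with $N'\subseteq N_0$. First I would check that $\{N'\in\mathcal N:N'\subseteq N_0\}$ still separates the points of $G$: given $g\neq e$, choose $N'''\in\mathcal N$ with $g\notin N'''$ and replace it by $N'''\cap N_0\in\mathcal N$. Hence the diagonal map $\phi=(\pi_{N'})_{N'\subseteq N_0}$ embeds $G$, as an abstract group, into the product $\prod_{N'\subseteq N_0}G/N'$ of \emph{finite} groups. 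Since $G$ is infinite, the inductive selection of indices used in the proof of Theorem~\ref{compacttotallydisconnected} produces a countable subfamily $N_1',N_2',\dots$ whose associated projection $p\colon G\to\prod_{k}G/N_k'$ has infinite image: at each stage, since $G$ is infinite while the current finite partial product cannot already determine $\phi$, some not-yet-chosen coordinate strictly enlarges the image, forcing its size to tend to infinity. Setting $N''=\bigcap_k N_k'$, property~(1) gives $N''\in\mathcal N$, and clearly $N''\subseteq N_0$; moreover $G/N''\cong p(G)$ is infinite, contradicting the assumption. This refutes the assumption and proves the Key Claim.

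With the Key Claim in hand the two assertions are routine. For the \emph{infinite separable quotient}, apply the Key Claim to $N_0=G$: the resulting $N''\in\mathcal N$ has $G/N''$ infinite, and by~(2) it has a countable network and is therefore infinite and separable. For the \emph{initial-topology} assertion, let $\tau$ be the initial topology induced by the family $\mathcal F=\{\pi_N:N\in\mathcal N,\ G/N\text{ infinite}\}$. Each map in $\mathcal F$ is continuous, so $\tau$ is coarser than the topology of $G$; for the reverse inclusion I would show that every basic set $\pi_N^{-1}(V)$ from~(3) is $\tau$-open. If $G/N$ is infinite this is immediate. If $G/N$ is finite, the Key Claim supplies $N''\in\mathcal N$ with $N''\subseteq N$ and $G/N''$ infinite; the inclusion $N''\subseteq N$ yields a continuous homomorphism $q\colon G/N''\to G/N$ with $\pi_N=q\circ\pi_{N''}$, whence $\pi_N^{-1}(V)=\pi_{N''}^{-1}(q^{-1}(V))$ is $\tau$-open because $\pi_{N''}\in\mathcal F$.

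I expect the Key Claim to be the only genuine obstacle; the rest is bookkeeping with Lemma~\ref{Le:Usp}. Two points there require care: one must pass to a \emph{countable} subfamily so that the intersection $N''$ stays $G_\delta$, and hence $G/N''$ retains a countable network by Lemma~\ref{Le:Usp}(a); and one must verify that this countable projection is genuinely infinite, which is exactly the combinatorial content borrowed from the proof of Theorem~\ref{compacttotallydisconnected}.
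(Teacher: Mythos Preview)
Your proof is correct and rests on the same framework as the paper's: Lemma~\ref{Le:Usp} together with the family $\mathcal N$ of closed normal $G_\delta$-subgroups. The paper's execution is somewhat more economical in two places. For the existence of a single $N_0\in\mathcal N$ with $G/N_0$ infinite, the paper treats this as immediate once one knows that members of $\mathcal N$ fit inside every neighbourhood of the identity; indeed it suffices to choose countably many distinct elements of $G$, separate each pair by some member of $\mathcal N$, and take the countable intersection---the inductive selection borrowed from Theorem~\ref{compacttotallydisconnected} is not needed. For the initial-topology assertion, rather than prove your stronger Key Claim and reapply it at every $N$ with finite quotient, the paper simply restricts once and for all to the subfamily $\{N\in\mathcal N:N\subset N_0\}$: each such $N$ automatically has $G/N$ infinite (it surjects onto $G/N_0$), and this subfamily still generates the topology because $\mathcal N$ is closed under intersection with $N_0$. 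Your Key Claim is correct but stronger than required.
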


\begin{proof}
It follows from (b) of Lemma~\ref{Le:Usp} that the topology of $G$ is initial with respect 
to the family of quotient homomorphisms onto topological groups with a countable 
network. Also, since the singletons in a space of countable pseudocharacter are 
$G_\delta$-sets, (b) of the same lemma implies that every neighborhood of the identity 
in $G$ contains a closed normal subgroup of type $G_\delta$ in $G$. Therefore, as
$G$ is infinite, there exists a closed normal subgroup $N_0$ of $G$ of type $G_\delta$
such that $G/N_0$ is infinite. 

Finally, let $\mathcal{N}$ be the family of closed normal subgroups $N$ of type $G_\delta$
in $G$ with $N\subset N_0$. Then the family of quotient homomorphisms $\pi_N\colon G
\to G/N$, with $N\in\mathcal{N}$, generates the topology of $G$ and each group $G/N$
is infinite. According to (a) of Lemma~\ref{Le:Usp}, the group $G/N$ has a countable
network for each $N\in\mathcal{N}$.
\end{proof}

Since every $\sigma$-compact topological group is evidently a Lindel\"of $\Sigma$-group,
the next result is immediate from Proposition~\ref{Prop:M1}.

\begin{corollary}\label{Cor:sigma-c} {\bf (Separable Quotient Theorem for  $\sigma$-compact Groups)} Let $G$ be an infinite $\sigma$-compact topological group. Then $G$ has a quotient group which is infinite and separable. Indeed,
the topology of $G$ is initial with respect to
the family of quotient homomorphisms of the group onto infinite groups with a countable network.
\end{corollary}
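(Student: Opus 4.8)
The plan is to derive this directly from Proposition~\ref{Prop:M1}, since the substantive work has already been carried out there. First I would verify the only hypothesis that Proposition~\ref{Prop:M1} requires, namely that an infinite $\sigma$-compact topological group $G$ is an infinite Lindel\"of $\Sigma$-group. This is built into the class: as recalled at the beginning of this section, the class of Lindel\"of $\Sigma$-groups contains all $\sigma$-compact topological groups (and is closed under the operations listed in \cite[Section~5.3]{AT}). Hence class membership needs no separate argument, and $G$ falls squarely within the scope of Proposition~\ref{Prop:M1}.

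Second, I would apply Proposition~\ref{Prop:M1} verbatim. It delivers both assertions simultaneously: that $G$ has an infinite quotient group with a countable network, and that the topology of $G$ is initial with respect to the family of quotient homomorphisms $\pi_N\colon G\to G/N$ onto infinite groups with a countable network. To translate this into the language of the corollary, I would invoke fact~(i) of the Terminology paragraph, namely that any space with a countable network is hereditarily separable, and in particular separable. Thus each infinite quotient $G/N$ produced by Proposition~\ref{Prop:M1} is an infinite separable group, which yields the desired separable quotient and, at the same time, the stated initial-topology description.

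Since every step is either a citation or a direct appeal to an already-established proposition, I do not anticipate any genuine obstacle. The one point warranting a word of care is the passage from "countable network" to "separable" in the statement: the network property is strictly stronger, so the corollary records only the weaker separability consequence. I would therefore phrase the conclusion so that it is clear the quotients in fact carry a countable network (as Proposition~\ref{Prop:M1} already asserts), with separability following from fact~(i).
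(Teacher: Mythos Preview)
Your proposal is correct and follows exactly the paper's approach: the paper's proof simply notes that every $\sigma$-compact topological group is evidently a Lindel\"of $\Sigma$-group, so the result is immediate from Proposition~\ref{Prop:M1}. Your additional remark about deducing separability from the countable-network conclusion via fact~(i) is fine but not strictly needed, since the corollary's ``Indeed'' clause already records the stronger countable-network conclusion verbatim from Proposition~\ref{Prop:M1}.
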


In Proposition  \ref{Ex:Qs} we give a negative answer to (iii) and (iv) of Problem~\ref{SQsigma-compact} by producing a $\sigma$-compact group which has no non-trivial separable metrizable quotient group. 

\begin{prop}\label{Ex:Qs}
There exists a countably
 infinite precompact abelian group $H$ such that every quotient 
group of $H$ is either trivial or non-metrizable.
\end{prop}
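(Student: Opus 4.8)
The plan is to realize $H$ as the additive group $\Z$ endowed with a carefully chosen totally bounded topology, engineered so that $\Z$ has no proper closed subgroup other than $\{0\}$. Recall the standard fact (Comfort--Ross; see \cite{AT}) that every point-separating subgroup $D$ of the character group $\widehat{\Z}=\T$ induces a precompact Hausdorff group topology $\tau_D$ on $\Z$, namely the one inherited from the diagonal embedding $\Z\hookrightarrow\T^{D}$, $n\mapsto(n\cdot t)_{t\in D}$. Moreover $(\Z,\tau_D)$ is metrizable exactly when $D$ is countable: its completion is the compact dual of the discrete group $D$, whose weight equals $|D|$, and a dense subgroup of a non-metrizable compact group is non-metrizable.

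First I would choose $D$ to be an \emph{uncountable torsion-free} subgroup of $\T$. Such a subgroup exists: extend $\{1\}$ to a Hamel basis of $\R$ over $\Q$, let $X$ be the rest of the basis (so $|X|=\cont$), and let $D\le\T$ be the subgroup generated by the images of $X$ in $\T=\R/\Z$. Since $X\cup\{1\}$ is $\Q$-independent, these images are $\Z$-independent, so $D$ is free abelian of rank $\cont$, hence torsion-free and uncountable. Being nonzero and torsion-free, $D$ contains an element $t$ of infinite order, so $nt\ne 0$ for every nonzero $n\in\Z$; thus $D$ separates points and $\tau_D$ is Hausdorff. Consequently $H:=(\Z,\tau_D)$ is a countably infinite precompact abelian group, and it is non-metrizable because $D$ is uncountable.

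The key step is to determine the closed subgroups of $H$. The subgroups of $\Z$ are precisely the $m\Z$ with $m\ge 0$. A subgroup of a precompact abelian group is dense if and only if the only continuous character vanishing on it is trivial. For $m\ge 2$, a character $\chi_t$ with $t\in D$ vanishes on $m\Z$ iff $mt=0$ in $\T$, which forces $t=0$ since $D$ is torsion-free. Hence every $m\Z$ with $m\ge 2$ is dense in $H$, and so the only closed subgroups of $H$ are $\{0\}$ and $\Z$. Therefore the only Hausdorff quotient groups of $H$ are $H/\{0\}\cong H$, which is non-metrizable, and $H/\Z\cong\{0\}$, which is trivial. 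This is exactly the required conclusion.

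The only thing one must get right is the interaction between the algebra and the topology in the density computation: torsion-freeness of $D$ must genuinely annihilate the characters vanishing on each $m\Z$, making these subgroups dense so that no non-trivial metrizable (indeed no non-trivial proper) quotient can appear; and one must separately invoke the metrizability criterion to ensure that $H$ itself, the unique non-trivial quotient, is non-metrizable. Both verifications are short, so I do not expect a serious obstacle once $\Z$ is equipped with an uncountable torsion-free group of characters.
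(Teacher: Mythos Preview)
Your argument is correct and takes a genuinely different route from the paper. The paper uses the quasicyclic $p$-group $\mathbb{C}_p$ endowed with its Bohr topology: the algebraic input is that every proper subgroup of $\mathbb{C}_p$ is finite, so every non-trivial quotient is again algebraically $\mathbb{C}_p$; one then invokes that the Bohr topology passes to quotients and that an infinite abelian group with its Bohr topology has no non-trivial convergent sequences, hence is non-metrizable. Your construction instead takes $\Z$ with the precompact topology coming from an uncountable torsion-free subgroup $D\le\T$; the algebraic input is that torsion-freeness of $D$ kills every character vanishing on $m\Z$ for $m\ge 2$, forcing those subgroups to be dense. This yields an example with only two closed subgroups, so exactly one non-trivial quotient, and the non-metrizability follows directly from the weight calculation $w(H)=|D|>\omega$. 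Your argument is more self-contained (no appeal to the finer results on Bohr compactifications used in the paper) and has the added flexibility that $|D|$ can be any uncountable cardinal, so the weight of $H$ can be prescribed; on the other hand, the paper's example carries the maximal precompact topology, which is sometimes a desirable extra feature.
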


\begin{proof}
For a given prime number $p$, denote by $\mathbb{C}_p$ the quasicyclic $p$-group 
$$
\{z\in\T: z^{p^n}=1 \mbox{ for some } n\in\N\}
$$
considered as a subgroup of the group $\T$. Clearly $\mathbb{C}_p$ is a countable 
infinite abelian group. Let $\tau$ be the \emph{Bohr topology} of $\mathbb{C}_p$, 
i.e.~the maximal precompact topological group topology of $\mathbb{C}_p$ \cite[Section~9.9]{AT}. 
We claim that the group $H=(\mathbb{C}_p,\tau)$ is as required. Indeed, let $N$ be a proper
subgroup of $H$. Then there exists an integer $n\geq 0$ such that 
$$
N=\{z\in \mathbb{C}_p: z^{p^n}=1\}
$$
(see \cite[Chapter~4]{Robinson}), so $N$ is finite and, hence, closed in $H$. Therefore
the quotient group $H/N$ is infinite and, in fact, algebraically isomorphic to $\mathbb{C}_p$.
It follows from \cite[Proposition~9.9.9\,c)]{AT} that the quotient topology of $H/N$ and the 
Bohr topology of the abstract group $H/N$ coincide. We conclude, therefore, that the groups 
$H$ and $H/N$ are  isomorphic as topological groups. The group $H/N$ being infinite and
precompact is not discrete. Further, according to \cite[Theorem~9.9.30]{AT}, all compact 
subsets of the groups $H$ and $H/N$ are finite, so the space $H/N$ does not contain 
non-trivial convergent sequences. This implies that $H/N$ is not metrizable.
\end{proof}

We now answer Problem~\ref{SQpseudocompact} in the affirmative. 

\begin{thm}\label{Th:Pseu} {\bf (Separable Quotient Theorem for Pseudocompact Groups)}
The topology of every infinite pseudocompact topological group, $G$, is initial with respect to 
the family of quotient homomorphisms onto infinite compact metrizable groups.  In particular,  $G$ has a quotient group which is  infinite separable compact and metrizable.
\end{thm}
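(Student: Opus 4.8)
The plan is to exploit the well-developed structure theory of pseudocompact groups, which are precompact (totally bounded), so that the Raikov completion $\varrho G$ is a compact group in which $G$ sits as a dense pseudocompact subgroup. The key structural fact I would invoke is that pseudocompact groups are $G_\delta$-dense in their completion: every nonempty $G_\delta$-subset of $\varrho G$ meets $G$. First I would pass to the compact group $K = \varrho G$ and use Theorem~\ref{SQTheoremCompact}, the Separable Quotient Theorem for Compact Groups, to obtain an infinite separable metrizable compact quotient of $K$. Concretely, since $G$ is infinite and precompact, $K$ is an infinite compact group, so by Theorem~\ref{SQTheoremCompact} there is a closed normal subgroup $M$ of $K$ with $K/M$ infinite, separable, compact and metrizable; equivalently there is a continuous surjective homomorphism $q\colon K \to Q$ onto an infinite compact metrizable group $Q$.

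The crux is then to transfer this quotient from the completion $K$ back down to the dense subgroup $G$, and here is where I expect the main obstacle to lie. I would set $N = M \cap G$, a closed normal subgroup of $G$, and consider the restriction $q\restriction G\colon G \to Q$. The danger is that $q(G)$ could be a proper dense subgroup of $Q$ rather than all of $Q$, and even if $q(G) = Q$ the quotient topology on $G/N$ need not a priori coincide with the subspace topology inherited from $Q$. The resolution uses $G_\delta$-density: because $M$ is the kernel of a continuous homomorphism onto the metrizable group $Q$, the subgroup $M$ is of type $G_\delta$ in $K$ (the identity of a metrizable group is a $G_\delta$-point, and its preimage is $G_\delta$). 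Combined with the $G_\delta$-density of $G$ in $K$, this forces $q(G)$ to be $G_\delta$-dense, hence dense, in $Q$; more importantly, one shows the map $G/N \to Q$ induced by $q\restriction G$ is not merely a continuous bijection onto a dense subgroup but in fact a topological isomorphism onto $Q$. The mechanism is that a pseudocompact group whose completion is $Q$ and which is $G_\delta$-dense in $Q$ already equals $Q$ when $Q$ is compact metrizable, since in a metrizable compact group every point is a $G_\delta$ and so $G_\delta$-density upgrades to ordinary density, while pseudocompactness plus density in a metrizable group yields compactness, hence all of $Q$.

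Assembling these pieces, I would argue as follows. The homomorphism $q\restriction G\colon G \to Q$ is continuous with $q(G)$ dense in $Q$; by $G_\delta$-density and metrizability of $Q$ one gets $q(G) = Q$, so $q\restriction G$ is surjective onto the infinite compact metrizable group $Q$. Since $G$ is pseudocompact and $q\restriction G$ is continuous and onto the compact group $Q$, standard arguments (pseudocompactness is preserved by continuous images and a pseudocompact subgroup of a compact metrizable group is closed, hence compact) show $q\restriction G$ is a quotient homomorphism, exhibiting $Q$ as an infinite separable compact metrizable quotient of $G$. For the stronger conclusion that the topology of $G$ is initial with respect to the family of all such quotient homomorphisms, I would run the preceding construction over the cofinal family of closed normal $G_\delta$-subgroups $M$ of $K$ for which $K/M$ is an infinite compact metrizable Lie group; since these generate the topology of the compact group $K$ (every compact group being a projective limit of compact Lie groups by Corollary~2.43 of \cite{COMPBOOK}), their restrictions generate the topology of the dense subgroup $G$.
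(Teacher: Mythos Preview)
Your approach is essentially the paper's: pass to the compact completion $\varrho G$, use $G_\delta$-density of $G$ in $\varrho G$ (Comfort--Ross), and restrict quotient homomorphisms of $\varrho G$ down to $G$. The paper reaches the generating family of quotients via Proposition~\ref{Prop:M1} on Lindel\"of $\Sigma$-groups rather than via Theorem~\ref{SQTheoremCompact} plus the projective-limit structure, but that is a cosmetic difference.

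There is, however, one step you have not properly closed. You need $q\restriction G$ to be \emph{open}, not merely surjective, and the parenthetical ``pseudocompactness is preserved by continuous images and a pseudocompact subgroup of a compact metrizable group is closed'' only re-establishes surjectivity; it says nothing about the quotient topology on $G/N$ matching that of $Q$. The paper handles openness as follows: since $\ker q$ is a $G_\delta$-set in $\varrho G$, every nonempty relatively open subset of $\ker q$ is a nonempty $G_\delta$-set in $\varrho G$ and hence meets $G$; thus $G\cap\ker q$ is dense in $\ker q$, and then \cite[Theorem~1.5.16]{AT} gives that the restriction of the open map $q$ to the dense subgroup $G$ is again open. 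An alternative that fits your sketch is to argue that $G/N$ is pseudocompact with countable pseudocharacter (since $N$ is $G_\delta$ in $G$), hence compact metrizable, so the continuous bijection $G/N\to Q$ is a homeomorphism. Either route works, but you should make one of them explicit rather than folding it into ``standard arguments''.
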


\begin{proof}
The completion, $\varrho{G}$, of the  infinite pseudocompact topological group $G$ is a compact  group that contains $G$ as a dense
topological subgroup. Hence $\varrho{G}$ is a Lindel\"of $\Sigma$-group and we can 
apply Proposition~\ref{Prop:M1} to conclude that the topology of $\varrho{G}$ is
initial with respect to the family of quotient homomorphisms onto infinite topological
groups with a countable network. 

Let $p\colon \varrho{G}\to K$ be an open continuous homomorphism of $\varrho{G}$
onto an infinite topological group $K$ with a countable network. Then the group $K$
is compact and, hence, has a countable base. In particular $K$ is metrizable. Denote
by $\pi$ the restriction of $p$ to $G$. Since the group $G$ is pseudocompact it
meets every nonempty $G_\delta$-set in $\varrho{G}$ \cite{CoRo}. The points in 
$K$ are $G_\delta$-sets, and so are the fibers $p^{-1}(y)$ in $\varrho{G}$, for all
$y\in K$. Therefore $G\cap p^{-1}(y)\neq\emptyset$ for each $y\in K$, which implies
the equality $p(G)=K$. It is also clear that $G\cap\ker{p}$ is dense in $\ker{p}$, for
$\ker{p}$ is a $G_\delta$-set in $\varrho{G}$. Hence the restriction of $p$ to $G$
is an open continuous homomorphism of $G$ onto $K$, by \cite[Theorem~1.5.16]{AT}.
Since the group $K$ is infinite, compact and metrizable, this completes the proof.
\end{proof}

\begin{em-remark}\label{pseudocompactandcompact}
{\rm Theorem~\ref{Th:Pseu} provides an alternative proof of Theorem~\ref{SQTheoremCompact}
for compact groups.}
\end{em-remark}

In the next section we will show that Theorem~\ref{Th:Pseu} cannot be extended to
precompact topological groups, even in the weak form of the existence of
nontrivial separable quotients.


\section{A Precompact Counterexample}\label{Sec:2}

\noindent In this section, firstly we
address Problem~\ref{SQprecompact} and Problem~\ref{SQRfactorizable} 
and answer both in the negative by producing a counterexample. This also provides 
a negative answer to each of Problem~\ref{SQTopGps}, Problem~\ref{SQInfiniteTopGps}, Problem~\ref{SQMetrizableTopGps}, and Problem~\ref{SQInfiniteMetrizableTopGps}. We 
conclude the section with Theorem~\ref{Th:CUD} which yields a negative answer to each of Problem~\ref{SQGsigma} (i), (ii), (iii) and (iv).


We shall now present an example of a \lq\lq{weird\rq\rq} precompact topological group without
non-trivial separable quotients. Furthermore, every subgroup of $G$ is either countable 
and closed or uncountable and dense in $G$. 

Recall that a subset $X$ of an abelian group $G$ with  identity $0_G$ is 
called \emph{independent} if the equality $n_1x_1+\cdots+n_kx_k=0_G$, where 
$n_1,\ldots,n_k\in\Z$ and $x_1,\ldots,x_k$ are pairwise distinct elements of $X$, 
implies that $n_1x_1=\cdots=n_kx_x=0_G$.

The conclusion of the following lemma is well-known, it follows from the fact that 
the dual $\Pi^\wedge$ of the product $\Pi=\prod_{i\in I} K_i$ of compact abelian
groups $K_i$ is isomorphic to the discrete group $\bigoplus_{i\in I} K_i^\wedge$, 
the direct sum of the discrete dual groups $K_i^\wedge$ (see \cite[Theorem~17]{Mor77} 
or \cite[Proposition~9.6.25]{AT}; the latter result is dual to the lemma).  

\begin{lemma}\label{Le:char}
Let $A$ be a nonempty set and $\chi$ a continuous character on the group $\T^A$. 
Then one can find pairwise distinct indices $\alpha_1,\ldots,\alpha_k\in A$ and integers $n_1,\ldots,n_k$ such that $\chi(x)=\prod_{i=1}^k x(\alpha_i)^{n_i}$, for each $x\in\T^A$.
\end{lemma}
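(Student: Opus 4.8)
The plan is to deduce the statement directly from the duality fact quoted immediately before the lemma, specialized to the constant family $K_\alpha=\T$ for $\alpha\in A$. First I would identify $\T^A$ with the product $\prod_{\alpha\in A}K_\alpha$ of compact abelian groups $K_\alpha=\T$, and recall the canonical isomorphism $\T^\wedge\cong\Z$ sending an integer $n$ to the character $t\mapsto t^n$. The quoted result then gives $(\T^A)^\wedge\cong\bigoplus_{\alpha\in A}\T^\wedge\cong\bigoplus_{\alpha\in A}\Z$, and the only remaining task is to unwind this isomorphism explicitly.

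The key point in the unwinding is that the canonical map $\bigoplus_{\alpha\in A}K_\alpha^\wedge\to(\prod_\alpha K_\alpha)^\wedge$ sends a finitely supported family $(\chi_\alpha)_\alpha$ to the character $x\mapsto\prod_\alpha\chi_\alpha(x(\alpha))$, the product being finite since only finitely many $\chi_\alpha$ are nontrivial. Under $\T^\wedge\cong\Z$, our given $\chi$ therefore corresponds to a finitely supported integer-valued function on $A$; denoting its finitely many nonzero values by $n_1,\dots,n_k$, taken at the pairwise distinct indices $\alpha_1,\dots,\alpha_k$, we obtain exactly $\chi(x)=\prod_{i=1}^k x(\alpha_i)^{n_i}$.

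As a self-contained alternative that avoids invoking the duality as a black box, I would argue from continuity together with the fact that $\T$ has no small subgroups. Fix an open arc $V\ni 1$ in $\T$ containing no nontrivial subgroup. By continuity of $\chi$ at the identity there is a basic neighborhood $U=\{x:x(\alpha)\in W_\alpha\text{ for }\alpha\in F\}$, with $F\subseteq A$ finite, such that $\chi(U)\subseteq V$. Since $U$ contains the subgroup $H=\{x:x(\alpha)=1\text{ for all }\alpha\in F\}$, and $\chi(H)$ is then a subgroup of $\T$ lying in $V$, we get $\chi(H)=\{1\}$. Hence $\chi$ annihilates $H$ and factors through the quotient $\T^A/H\cong\T^F$, which is a finite product; a character of a finite product of copies of $\T$ is visibly of the required form, completing the argument.

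The step I expect to carry the actual content is small: in the duality approach it is merely the explicit description of the canonical isomorphism, while in the direct approach it is the observation that a continuous character must annihilate a full subproduct $\T^{A\setminus F}$, which is the ``no small subgroups'' property of $\T$. Neither presents a genuine obstacle, consistent with the remark that the conclusion is well known; the only care needed is the bookkeeping to ensure that the indices $\alpha_i$ are pairwise distinct and the exponents $n_i$ are integers.
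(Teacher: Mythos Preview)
Your proposal is correct, and your first approach---deducing the lemma by unwinding the isomorphism $(\T^A)^\wedge\cong\bigoplus_{\alpha\in A}\T^\wedge\cong\bigoplus_{\alpha\in A}\Z$---is exactly what the paper does: the paper gives no proof at all, merely remarking before the statement that the conclusion is well known and follows from precisely this duality fact (citing \cite{Mor77} and \cite{AT}). Your second, self-contained argument via the ``no small subgroups'' property of $\T$ is not in the paper; it is a pleasant alternative that avoids citing the general product-duality theorem, at the modest cost of redoing a small piece of it by hand.
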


\begin{lemma}\label{Le:M}
Let $G$ be an uncountable precompact abelian group such that every countable
subgroup of $G$ is closed. Then the following are equivalent:
\begin{enumerate} 
\item[\rm{(a)}] every uncountable subgroup of $G$ is dense in $G$;
\item[\rm{(b)}] the kernel of every non-trivial continuous character of $G$ is countable.
\end{enumerate}
Furthermore, each of the items (a), (b) implies 
\begin{enumerate}
\item[\rm{(c)}] every quotient group of $G$ is either trivial or non-separable. 
\end{enumerate}
\end{lemma}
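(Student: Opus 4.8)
The plan is to dispatch the equivalence (a) $\Leftrightarrow$ (b) first, since both directions rest on the single fact that continuous characters detect proper closed subgroups of a precompact abelian group, and then to derive (c) from (b) by combining it with the standing hypothesis that every countable subgroup of $G$ is closed. Note that neither direction of the equivalence actually needs the "countable subgroups are closed" assumption; that hypothesis is reserved for the proof of (c).

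For (a) $\Rightarrow$ (b), let $\chi$ be a non-trivial continuous character of $G$. Its kernel $\ker\chi$ is closed and proper (proper because $\chi\neq 1$), hence not dense, so by (a) it cannot be uncountable; thus $\ker\chi$ is countable. For (b) $\Rightarrow$ (a), let $D$ be an uncountable subgroup and suppose it is not dense. Then $\overline{D}$ is a proper closed subgroup, the quotient $G/\overline{D}$ is a non-trivial Hausdorff precompact abelian group, and the continuous characters of such a group separate its points: it embeds densely into a compact abelian group, whose characters separate points by Pontryagin duality for compact groups (this is precisely the mechanism underlying Lemma~\ref{Le:char}). Choosing a non-trivial continuous character $\psi$ on $G/\overline{D}$ and composing with the quotient homomorphism produces a non-trivial continuous character $\chi$ of $G$ with $\overline{D}\subseteq\ker\chi$, so $\ker\chi$ is uncountable, contradicting (b). Hence $D$ is dense.

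For the "furthermore" clause it suffices, by the equivalence just proved, to show (b) $\Rightarrow$ (c). Let $N$ be a closed subgroup with $G/N$ non-trivial, write $q\colon G\to G/N$ for the quotient homomorphism, and suppose for contradiction that $G/N$ is separable. As a non-trivial Hausdorff precompact abelian group, $G/N$ carries a non-trivial continuous character $\bar\chi$, so that $\chi:=\bar\chi\circ q$ is a non-trivial continuous character of $G$ with $N\subseteq\ker\chi$; by (b), $\ker\chi$, and hence $N$, is countable. Separability of $G/N$ yields a countable dense subgroup $D'$ (the subgroup generated by a countable dense subset), and I set $D:=q^{-1}(D')$. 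Since $q$ is an open continuous surjection, $D$ is dense in $G$; since $N$ is countable, $D=\bigcup_{d'\in D'}q^{-1}(d')$ is a countable union of cosets of $N$ and is therefore countable.

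The contradiction now comes from the two standing hypotheses pulling against each other: $D$ is a countable subgroup, so it is closed, yet being dense it must equal $G$, contradicting that $G$ is uncountable. Thus no non-trivial quotient of $G$ is separable, which is (c). The step I expect to require the most care, and the genuine engine of the argument, is the repeated use that a non-trivial precompact Hausdorff abelian group admits a non-trivial continuous character; this is exactly where precompactness of $G$ and of its quotients is indispensable, and it is what converts the purely topological hypothesis of separability into the arithmetical statement that the kernel $N$ must be countable.
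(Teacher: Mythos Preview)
Your proof is correct and follows essentially the same approach as the paper: the equivalence (a)\,$\Leftrightarrow$\,(b) is argued identically, and for (b)\,$\Rightarrow$\,(c) both you and the paper first use a non-trivial character on the quotient to force the kernel $N$ to be countable, then exploit the hypothesis that countable subgroups of $G$ are closed. The only cosmetic difference is that the paper pushes this forward to show every countable subgroup of $H=G/N$ is closed (whence the uncountable $H$ cannot be separable), whereas you pull a countable dense subgroup of $H$ back to a countable dense subgroup of $G$ and obtain the contradiction there; these are two sides of the same coin.
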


\begin{proof}
(a)\,$\Rightarrow$\,(b). Assume that every uncountable subgroup of $G$ is dense in $G$. 
Let $\chi$ be a continuous non-trivial character on $G$. Then the kernel $K$ of $\chi$ 
must be countable\,---\,otherwise $K=G$ and $\chi$ is trivial.\smallskip

(b)\,$\Rightarrow$\,(a). Let $D$ be an uncountable subgroup of $G$. Then 
$K=\mbox{cl}_G(D)$ is a closed subgroup of $G$. If $K\neq G$, then the quotient 
group $G/K$ is non-trivial and precompact, so there exists a non-trivial continuous 
character $\psi$ on $G/K$. Let $p\colon G\to G/K$ be the quotient homomorphism. 
Then $\chi=\psi\circ{p}$ is a non-trivial continuous character on $G$ and $K\subset 
\ker\chi$. This contradicts (b) of the lemma and proves that $D$ is dense in $G$.\smallskip

We have thus proved that (a) and (b) of the lemma are equivalent.\smallskip

(b)\,$\Rightarrow$\,(c). Let $h\colon G\to H$ be a continuous open homomorphism of $G$ 
onto a topological group $H$ with $|H|>1$. Then $H$ is precompact and abelian. Since 
$|H|>1$, there exists a non-trivial continuous character $\chi_H$ on $H$. Hence 
$\chi_G=\chi_H\circ{h}$ is a non-trivial continuous character on $G$. By (b) of the lemma, 
the kernel of $\chi_G$ is countable. Therefore the kernel of $h$ is countable as well. 
This implies that $|H|=|G|>\omega$.

Let $C$ be a countable subgroup of $H$. Then $h^{-1}(C)$ is a countable subgroup 
of $G$, so $h^{-1}(C)$ is closed in $G$ by (a) of the lemma. Since the homomorphism 
$h$ is continuous and open, $C$ is closed in $H$. We have thus proved that all countable
subgroups of $H$ are closed. If $S$ is a countable subset of $H$, then the subgroup 
$C$ of $H$ generated by $S$ is also countable and, hence, $C$ is closed in $H$. 
Since the group $H$ is uncountable, we conclude that it cannot be separable. 
\end{proof}

The proof of the following fact is quite simple and left to the reader.

\begin{lemma}\label{Le:Alg}
Let $A,B,C$ be subgroups of an abelian group $K$. 
\begin{enumerate}
\item[{\rm (a)}] The equality $(A+B)\cap C=(A^*+B)\cap C$ holds, where $A^*=A\cap (B+C)$. 
\item[{\rm (b)}] If $A$ and $B+C$ have trivial intersection, then $(A+B)\cap C=B\cap C$.
\end{enumerate}
\end{lemma}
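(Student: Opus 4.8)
The plan is to prove part (a) by establishing the two inclusions separately, and then to obtain part (b) at once by specializing part (a).

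For part (a), the inclusion $(A^*+B)\cap C\subseteq (A+B)\cap C$ is immediate: since $A^*=A\cap(B+C)\subseteq A$ we have $A^*+B\subseteq A+B$, and intersecting both sides with $C$ preserves the containment. The substance therefore lies in the reverse inclusion. To prove it, I would take an arbitrary $x\in (A+B)\cap C$ and write $x=a+b$ with $a\in A$ and $b\in B$. The key observation is that $a=x-b$, and since $x\in C$ and $-b\in B$ (both being subgroups of $K$), we obtain $a\in C+B=B+C$. Together with $a\in A$, this gives $a\in A\cap(B+C)=A^*$. Hence $x=a+b\in A^*+B$, and as $x\in C$ we conclude $x\in (A^*+B)\cap C$, which finishes part (a).

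For part (b), I would simply invoke part (a). The hypothesis that $A$ and $B+C$ have trivial intersection says precisely that $A^*=A\cap(B+C)$ is the trivial subgroup. Consequently $A^*+B=B$, and part (a) yields
\[
(A+B)\cap C=(A^*+B)\cap C=B\cap C,
\]
as required.

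There is no real obstacle here; the single line where care is needed is the step $a=x-b\in B+C$ in part (a), as this is exactly where the ambient group structure is used and what forces $a$ into $A^*$. Everything else is routine manipulation of sums and intersections of subgroups, which is why the statement can reasonably be left to the reader.
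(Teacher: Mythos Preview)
Your proof is correct and matches the paper's intent: the paper leaves the argument to the reader, only remarking that item (b) is immediate from (a), which is precisely what you do. The element-chasing verification of (a) and the specialization $A^*=\{0\}$ for (b) are the natural steps, and there is nothing to add.
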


Notice that item (b) of the above lemma is immediate from (a).\smallskip

Following \cite[Section~4]{Tk88} we say that a subgroup $S$ of an abelian topological 
group $G$ is \emph{$h$-embedded} in $G$ if \emph{every} homomorphism $f$ of 
$S$ to the circle group $\T$ extends to a \emph{continuous} character of $G$. In particular, 
every homomorphism of an $h$-embedded subgroup of $G$ to $\T$ is continuous.
If the group $G$ is precompact, then a subgroup $S$ of $G$ is $h$-embedded
if and only if $S$ inherits from $G$ the maximal precompact group topology, i.e.~the 
\emph{Bohr} topology.\smallskip 

The following fact is a weaker version of \cite[Proposition~2.1]{ACDT}.

\begin{lemma}\label{Le:ACDT}
If every countable subgroup of an abelian topological group $G$ is $h$-embedded 
in $G$, then the countable subgroups of $G$ are closed.
\end{lemma}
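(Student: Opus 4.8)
The plan is to argue by contradiction, manufacturing from a non-closed countable subgroup a continuous character of $G$ that both annihilates the subgroup and is nonzero on one of its limit points; this is impossible, since the kernel of a continuous character is closed. So let $C$ be a countable subgroup of $G$ and suppose it is not closed. I would pick $g\in\ovl{C}\sm C$ and put $D=C+\hull{g}$. As $D$ is generated by the countable set $C\cup\{g\}$, it is itself countable, and the hypothesis therefore guarantees that $D$ is $h$-embedded in $G$.

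The next step is to produce a suitable homomorphism on $D$. Since $g\notin C$, the coset $g+C$ is a nonzero element of the discrete abelian group $D/C$. The character group of any abelian group separates its points (equivalently, $\T$ is divisible and hence an injective $\Z$-module), so there is a homomorphism $\bar f\colon D/C\to\T$ with $\bar f(g+C)\neq 0$. Composing $\bar f$ with the canonical projection $D\to D/C$ gives a homomorphism $f\colon D\to\T$ with $f(C)=\{0\}$ and $f(g)\neq 0$.

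Finally, because $D$ is $h$-embedded, $f$ extends to a continuous character $\chi$ of $G$ with $\chi|_D=f$. Then $C\sub\ker\chi$, and as $\chi$ is continuous into the Hausdorff group $\T$ its kernel is closed, so $\ovl{C}\sub\ker\chi$ and in particular $\chi(g)=0$. This contradicts $\chi(g)=f(g)\neq 0$, and hence $C$ is closed. The only delicate point\,---\,the closest thing to an obstacle\,---\,is the elementary separation fact that $D/C$ carries a character not killing $g+C$; the rest is a direct use of the definition of $h$-embedding together with the closedness of character kernels. Note in particular that no precompactness of $G$ is needed, so the conclusion holds for an arbitrary abelian topological group.
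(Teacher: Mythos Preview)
Your argument is correct. The paper does not actually prove this lemma; it merely records it as a weaker version of \cite[Proposition~2.1]{ACDT} and moves on, so there is no in-paper proof to compare against. Your proof is the standard one: enlarge $C$ to the countable group $D=C+\hull{g}$ for some $g\in\ovl{C}\setminus C$, use divisibility of $\T$ to separate $g+C$ from $0$ in $D/C$, extend to a continuous character of $G$ via $h$-embedding, and reach a contradiction with closedness of the kernel. One cosmetic point: the paper uses multiplicative notation for $\T$ with identity $1$, so your ``$f(g)\neq 0$'' and ``$\chi(g)=0$'' should read ``$\neq 1$'' and ``$=1$'' to match the ambient conventions.
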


In the next theorem we present an uncountable precompact group without non-trivial
separable quotients.

\begin{thm}\label{Th:Ans}
There exists an uncountable dense subgroup $G$ of the compact group $\T^\cont$ 
such that every countable subgroup of $G$ is $h$-embedded in $G$ and closed and 
every uncountable subgroup of $G$ is dense in $G$. Hence every quotient group of 
$G$ is either trivial or non-separable. 
\end{thm}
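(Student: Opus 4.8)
The plan is to construct $G$ by transfinite recursion of length $\cont$, building an increasing chain of countable subgroups whose union is the desired group. The group $\T^\cont$ has weight $\cont$, so it has exactly $\cont$ open sets and $\cont$ continuous characters (by Lemma~\ref{Le:char}, each continuous character of $\T^\cont$ depends on finitely many coordinates, and there are $\cont$ such finitary data). I would enumerate as $\{U_\alpha:\alpha<\cont\}$ a base for $\T^\cont$ and list all the ``threats'' to density that must be defeated. The key structural goal is to produce an uncountable subgroup $G=\bigcup_{\alpha<\cont} G_\alpha$, where each $G_\alpha$ is countable, such that (i) every countable subgroup is $h$-embedded, and (ii) every uncountable subgroup is dense. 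Then Lemma~\ref{Le:ACDT} gives that countable subgroups are closed, and Lemma~\ref{Le:M} converts property (a) (uncountable subgroups dense) into (c) (quotients trivial or non-separable), which is exactly the conclusion.

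For $h$-embeddedness, I would arrange at each step that the newly constructed countable group $G_\alpha$ sits in $\T^\cont$ as an \emph{independent}-like family spread across fresh coordinates, so that its induced topology is the Bohr topology. The natural device is to add generators whose supports use coordinates not yet touched, making the partial characters freely extendable; since a precompact group's countable subgroup is $h$-embedded exactly when it carries the Bohr topology, controlling the coordinate supports gives $h$-embeddedness of $G$ and hence of every countable subgroup. To force uncountable subgroups to be dense, I would use the equivalence in Lemma~\ref{Le:M}: it suffices to guarantee that every non-trivial continuous character of $G$ has countable kernel. Since characters of $G$ are restrictions of characters of $\T^\cont$ (by $h$-embeddedness together with Lemma~\ref{Le:char}), at stage $\alpha$ I would, for the $\alpha$-th finitary character $\chi_\alpha$ of $\T^\cont$, add an element $g_\alpha\in G_{\alpha+1}$ on which $\chi_\alpha$ is non-trivial whenever $\chi_\alpha$ is non-trivial on $\T^\cont$, thereby ensuring no non-trivial character can have an uncountable (indeed more than countable) kernel once the recursion is complete.

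The main bookkeeping is ensuring the recursion stays consistent: the algebraic manipulations needed to add a generator that simultaneously lies in a prescribed basic open set $U_\alpha$ (for density of $G$ in $\T^\cont$), uses fresh coordinates (for $h$-embeddedness), and takes a prescribed non-trivial value under $\chi_\alpha$ (for countable kernels) must not collide. This is where Lemma~\ref{Le:Alg} enters: its intersection identities let me verify that extending $G_\alpha$ by a new independent generator does not disturb the already-guaranteed $h$-embeddedness or create unwanted algebraic relations, i.e.~that $(A+B)\cap C$ reduces to $B\cap C$ when the new generator is independent of the old group and the relevant character data. Because each $\chi_\alpha$ involves only finitely many coordinates and each basic open set constrains only finitely many coordinates, at every stage uncountably many coordinates remain free, so there is always room to choose $g_\alpha$ meeting all three requirements.

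The hard part will be the simultaneous satisfaction of the three demands at each stage while keeping each $G_\alpha$ countable and keeping the whole group uncountable and dense. Concretely, the delicate point is the interaction between $h$-embeddedness (which wants the group to look ``free'' and spread out over coordinates) and the density-of-uncountable-subgroups condition (which, via Lemma~\ref{Le:M}, demands that no non-trivial character have a large kernel): one must check that making characters non-degenerate on $G$ does not accidentally impose relations that destroy the freeness required for $h$-embeddedness. I expect to resolve this exactly as sketched, by confining each step's new generator to fresh coordinates so that the independence hypothesis of Lemma~\ref{Le:Alg}(b) applies and the two conditions decouple. Once the recursion is complete, the three cited lemmas assemble the conclusion immediately: $h$-embeddedness of all countable subgroups (by construction) gives closedness via Lemma~\ref{Le:ACDT}, the countable-kernel property gives density of uncountable subgroups via Lemma~\ref{Le:M}(b)$\Rightarrow$(a), and Lemma~\ref{Le:M}(c) then yields that every quotient of $G$ is trivial or non-separable.
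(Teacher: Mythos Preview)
There is a genuine gap in your approach to condition~(b) of Lemma~\ref{Le:M}. Adding, at stage~$\alpha$, a single element $g_\alpha$ with $\chi_\alpha(g_\alpha)\neq 1$ only forces $\chi_\alpha\hskip-1.3pt\res G$ to be \emph{non-trivial}; it does nothing to bound the size of $\ker(\chi_\alpha\hskip-1.3pt\res G)$. Worse, your ``fresh coordinates'' device works directly against countable kernels: if for $\beta>\alpha$ the generator $g_\beta$ is supported on coordinates disjoint from the finite support of $\chi_\alpha$, then $\chi_\alpha(g_\beta)=1$, so $g_\beta\in\ker\chi_\alpha$. Thus all but countably many of your generators land in the kernel of each fixed $\chi_\alpha$, and the kernel is uncountable. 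The two requirements you are trying to decouple\,---\,spreading generators over fresh coordinates for $h$-embeddedness, and keeping character kernels small\,---\,are in direct conflict under this scheme, and Lemma~\ref{Le:Alg} cannot repair this.

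The paper's construction is organized quite differently and avoids this conflict. Rather than building $G$ elementwise inside $\T^\cont$, one takes the free abelian group $A(X)$ on a set of size~$\cont$, enumerates \emph{all} pairs $(C_\alpha,g_\alpha)$ of a countable subgroup and a homomorphism $C_\alpha\to\T$, and for each $\alpha$ constructs a homomorphism $f_\alpha\colon A(X)\to\T$ that extends $g_\alpha$ and has \emph{countable kernel}; then $G$ is the image of the diagonal map $f=\Delta_\alpha f_\alpha$. In this picture the $\alpha$th coordinate projection $p_\alpha\hskip-1.3pt\res G$ is, by design, a continuous character extending the prescribed $g_\alpha$ (this gives $h$-embeddedness directly, not via ``Bohr topology'' considerations), and its kernel is countable because $f_\alpha$ was built to be injective off a countable subgroup. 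The delicate recursion, controlled by an independent set $E\subset\T$ and the intersection calculus of Lemma~\ref{Le:Alg}, guarantees simultaneously that each $f_\alpha$ has countable kernel \emph{and} that an arbitrary non-trivial continuous character $\varphi$ of $G$ (which is a finite product of powers of the $p_{\alpha_i}$'s) also has countable kernel. The last step is the real content and requires the careful argument of Claim~2 in the paper; nothing in your outline addresses the kernel of a \emph{combination} of coordinate projections.
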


\begin{proof}
Let $X$ be a set of cardinality $\cont$ and $A(X)$ the free abelian group on $X$.
We will define a monomorphism $f$ of the group $A(X)$ to $\T^\cont$ and then take 
$G$ to be $f(A(X))$, considered as a topological subgroup of $\T^\cont$.

Since $|A(X)|=|X|=\cont$, the group $A(X)$ contains exactly $\cont^\omega=\cont$ 
countable subgroups. Let $\mathcal{C}$ be the family of all countable subgroups 
of $A(X)$. For every $C\in\mathcal{C}$, the family $H(C)$ of all homomorphisms of 
$C$ to $\T$ has cardinality at most $\cont$, so the family $\mathcal{H}=\bigcup\{H(C): 
C\in\mathcal{C}\}$ satisfies $|\mathcal{H}|=\cont$. 

Let $\{(C_\alpha,g_\alpha): \alpha<\cont\}$ be an enumeration of the family 
$\{(C,g): C\in\mathcal{C},\ g\in H(C)\}$. The subgroup $C_\alpha$ of $A(X)$ 
being countable, there exists a countable subset $Y_\alpha$ of $X$ 
such that $C_\alpha\subset \hull{Y_\alpha}$, where $\alpha<\cont$. 
Let $Z_\alpha=X\setminus Y_\alpha$.

Further, let $E$ be an independent subset of $\T$ such that $|E|=\cont$ and each element 
of $E$ has infinite order (see \cite[Lemma~7.1.6]{AT}). Let also $\{E_\alpha: \alpha<\cont\}$
be a partition of $E$ into $\cont$ pairwise disjoint subsets $E_\alpha$, each of cardinality 
$\cont$. 

For every $\alpha<\cont$, we will define a homomorphism $f_\alpha\colon A(X)\to \T$
satisfying the following conditions:
\begin{enumerate}
\item[{\rm (i)}] $f_\alpha$ and $g_\alpha$ coincide on the subgroup $C_\alpha$ of $A(X)$;
\item[{\rm (ii)}] $b_\alpha=f_\alpha\res Z_\alpha$ is a bijection of $Z_\alpha$ onto a subset 
                       of $E_\alpha$, so the restriction of $f_\alpha$ to $\hull{Z_\alpha}$ is a 
                       monomorphism;
\item[{\rm (iii)}] the subgroups of $\T$ generated by the sets $f_\alpha(Z_\alpha)$ and 
                        $f_\alpha(Y_\alpha)\cup\bigcup_{\nu<\alpha} f_\nu(X)$ have
                        trivial intersection;
\item[{\rm (iv)}] $f_\alpha(A(X))\subset \hull{f_\alpha(Y_\alpha)}\cdot\hull{E_\alpha}$.
\end{enumerate}

Notice that (ii) and (iii) together imply that $\ker f_\alpha\subset \hull{Y_\alpha}$, 
so the kernel of $f_\alpha$ is countable for each $\alpha<\cont$. Also, since the
equality $X=Y_\alpha\cup Z_\alpha$ holds for each $\alpha<\cont$, (iv) follows 
from (iii). However we isolate (iv) for further applications. To construct the family 
$\{f_\alpha: \alpha<\cont\}$ satisfying (i)--(iv) we argue as follows. 

For every $\alpha<\cont$, we extend $g_\alpha$ to a homomorphism $h_\alpha$
of $\hull{Y_\alpha}$ to $\T$. There exists a countable set $D_0\subset E$ such that
$\hull{h_0(Y_0)}\cap \hull{E}\subset \hull{D_0}$. Let $F_0=E_0\setminus D_0$.
Then $|F_0|=\cont$, so we choose a bijection $b_0$ of $Z_0=X\setminus Y_0$ onto 
$F_0$. Since $A(X)=\hull{Y_0}\oplus \hull{Z_0}$, there exists a homomorphism
$f_0\colon A(X)\to\T$ which extends $h_0$ and coincides with $b_0$ on $Z_0$. 
Clearly the restriction of $f_0$ to $\hull{Z_0}$ is a monomorphism. One can easily 
verify that the groups $\hull{f_0(Y_0)}$ and $\hull{F_0}=\hull{f_0(Z_0)}$ have trivial 
intersection. Therefore $f_0$ satisfies (i)--(iv). 

Assume that for some $\alpha<\cont$, we have defined a family $\{f_\nu: \nu<\alpha\}$ 
of homomorphisms of $A(X)$ to $\T$ satisfying (i)--(iv). The subgroup $H_\alpha$ of 
$\T$ generated by $h_\alpha(Y_\alpha)\cup\bigcup_{\nu<\alpha} f_\nu(Y_\nu)$ 
has cardinality at most $|\alpha+1|\cdot\omega<\cont$. Let $H^*_\alpha=H_\alpha\cap 
\big\langle{\bigcup_{\nu\leq\alpha} E_\nu}\big\rangle$. Then $|H^*_\alpha|\leq |H_\alpha|<\cont$, 
so we can find a subset $D_\alpha$ of $\bigcup_{\nu\leq\alpha} E_\nu$ with $|D_\alpha|<\cont$ 
such that $H^*_\alpha\subset\hull{D_\alpha}$. Let $F_\alpha=E_\alpha\setminus D_\alpha$. 
Notice that $|F_\alpha|=\cont$.\medskip 

\noindent
\textbf{Claim~1.} \textit{The groups $\hull{F_\alpha}$ and 
$H_\alpha\cdot \big\langle{\bigcup_{\nu<\alpha} E_\nu}\big\rangle$ 
have trivial intersection.}\medskip 

Indeed, we apply (a) of Lemma~\ref{Le:Alg} with $A=H_\alpha$, 
$B=\big\langle{\bigcup_{\nu<\alpha} E_\nu}\big\rangle$, and $C=\hull{F_\alpha}$ to deduce that 
\begin{eqnarray}
(A\cdot B)\cap C=(A^*\cdot B)\cap C \subset
\Big(H^*_\alpha\cdot \big\langle{\bigcup_{\nu<\alpha} E_\nu}\big\rangle\Big) 
\cap \hull{F_\alpha},\label{Eq:3}
\end{eqnarray}
where $A^*=A\cap (B\cdot C)\subset H^*_\alpha$. Further, $F_\alpha$ and 
$D_\alpha\cup\bigcup_{\nu<\alpha} E_\nu$ are pairwise disjoint subsets of 
the independent set $E\subset \T$, so the groups $\hull{F_\alpha}$ and 
$\hull{D_\alpha}\cdot \big\langle{\bigcup_{\nu<\alpha} E_\nu}\big\rangle$ 
have trivial intersection. Since $H^*_\alpha\subset \hull{D_\alpha}$, it follows 
from $(\ref{Eq:3})$ that $(A\cdot B)\cap C=\{1\}$. This proves Claim~1.

Consider an arbitrary bijection $b_\alpha\colon Z_\alpha\to F_\alpha$. There exists a 
homomorphism $f_\alpha\colon A(X)\to\T$ which extends $h_\alpha$ and coincides 
with $b_\alpha$ on $Z_\alpha$. It is clear that $f_\alpha$ extends $g_\alpha$ and 
the restriction of $f_\alpha$ to $\hull{Z_\alpha}$ is a monomorphism. Hence conditions 
(i) and (ii) hold true at the step $\alpha$. Condition (iv) is also fulfilled. Indeed, it follows
from the definition of $f_\alpha$ that $f_\alpha(Z_\alpha)=F_\alpha\subset E_\alpha$.
Hence, applying the equality $X=Y_\alpha\cup Z_\alpha$ we deduce that 
$f_\alpha(A(X))=\hull{f_\alpha(X)}\subset \hull{f_\alpha(Y_\alpha)}\cdot \hull{E_\alpha}$. 
It remains to verify that (iii) is also valid. Indeed, according to (iv) we have 
$f_\nu(X)\subset\hull{f_\nu(Y_\nu)}\cdot \hull{E_\nu}=\hull{f_\nu(Y_\nu)\cup E_\nu}$, 
for each $\nu<\alpha$. Hence the group generated by $f_\alpha(Y_\alpha)\cup
\bigcup_{\nu<\alpha} f_\nu(X)$ (see condition (iii)) is contained in 
\begin{eqnarray*}
\Big\langle{f_\alpha(Y_\alpha)\cup\bigcup_{\nu<\alpha} \big(f_\nu(Y_\nu)\cup E_\nu\big)}
\hskip-0.5pt\Big\rangle \hskip-6.5pt
& = &\hskip-6.5pt
\Big\langle{\bigcup_{\nu\leq\alpha} f_\nu(Y_\nu)\cup\bigcup_{\nu<\alpha} E_\nu}\Big\rangle\\
& = & \hskip-6.5pt \Big\langle{\bigcup_{\nu\leq\alpha} f_\nu(Y_\nu)}\Big\rangle\cdot 
\Big\langle{\bigcup_{\nu<\alpha} E_\nu}\Big\rangle =
H_\alpha\cdot \Big\langle{\bigcup_{\nu<\alpha} E_\alpha}\Big\rangle.
\end{eqnarray*}
By Claim~1, the groups $\hull{F_\alpha}=\hull{f_\alpha(Z_\alpha)}$ and 
$H_\alpha\cdot \big\langle{\bigcup_{\nu<\alpha} E_\alpha}\big\rangle$ have trivial 
intersection. Clearly this implies (iii) and finishes our recursive construction.\smallskip

Let $f$ be the diagonal product of the family $\{f_\alpha: \alpha<\cont\}$. We claim that
$f$ is a monomorphism of $A(X)$ to $\T^\cont$. Indeed, take an arbitrary element $x\in
A(X)$ distinct from the identity and let $C_x$ be the cyclic subgroup of $A(X)$ generated
by $x$. Take a homomorphism $g$ of $C_x$ to $\T$ such that $g(x)\neq 1$. There exists
$\alpha<\cont$ such that $(C_\alpha,g_\alpha)=(C_x,g)$. Then (i) implies that 
$f_\alpha(x)=g_\alpha(x)=g(x)\neq 1$, so the element $f(x)$ is distinct from the identity
of the group $\T^\cont$. This proves our claim. 

We consider the group $G=f(A(X))$ with the topology inherited from the compact group 
$\T^\cont$. Let $p_\alpha$ be the projection of $\T^\cont$ to the $\alpha$th factor 
$\T_{(\alpha)}$, where $\alpha<\cont$. Our definition of $f$ implies that the equality 
$p_\alpha\circ f=f_\alpha$ holds for each $\alpha<\cont$. Since $f$ is a monomorphism 
and $\ker f_\alpha$ is countable, the restriction of each projection $p_\alpha$ to $G$ has 
countable kernel as well.

To show that every countable subgroup, say, $D$ of $G$ is $h$-embedded, we  
consider a homomorphism $h\colon D\to\T$. Take a countable subgroup $C$ of 
$G$ such that $f(C)=D$ and let $g=h\circ f$. There exists $\alpha<\cont$ such that 
$(C,g)=(C_\alpha,g_\alpha)$. Then, for every $x\in C$, we have the equalities
$$
(h\circ f)(x)=g(x)=g_\alpha(x)=f_\alpha(x)=(p_\alpha\circ f)(x).
$$ 
Since $f$ is a monomorphism, we conclude that $h$ and $p_\alpha$ coincide on
$D=f(C)$. Hence $p_\alpha\hskip-1.3pt\res G$ is a continuous character on $G$ 
extending $h$. It follows from Lemma~\ref{Le:ACDT} that every countable subgroup 
of $G$ is closed. 

Our next step is to show that $G$ is dense in $\T^\cont$. This is equivalent to
the density of the projection of $G$ to every subproduct $\T^A$, where $A$ is 
a finite subset of the index set $\cont$. So let $A\subset \cont$ be a finite nonempty
set. Then $Y=\bigcup_{\alpha\in A} Y_\alpha$ is a countable subset of $X$, so
we can take an element $x\in X\setminus Y$. It follows from condition (ii) of our 
construction that $f_\alpha(x)\in E_\alpha$, for each $\alpha\in A$. Since the sets 
$E_\alpha\subset E$ with $\alpha\in A$ are pairwise disjoint, we see that the 
coordinates $\{f_\alpha(x): \alpha\in A\}$ of the element $p_A(f(x))\in \T^A$ are 
pairwise distinct and form a subset of $E$; here $p_A\colon \T^\cont\to \T^A$ is 
the projection. Notice that the set $E$ is independent and consists of elements 
of infinite order. Hence the element $p_A(f(x))$ generates a dense subgroup of 
$\T^A$ (this follows, e.g., from \cite[Example~65]{Pon}). Therefore $p_A(G)$ is 
dense in $\T^A$, as claimed.

To show that every non-trivial quotient group of $G$ is not separable it suffices,
by Lemma~\ref{Le:M}, to verify that the kernel of every non-trivial continuous 
character of $G$ is countable. 

Let $\varphi\colon G\to\T$ be a continuous homomorphism with $|\varphi(G)|>1$.
Then $\varphi$ extends to a continuous homomorphism $\psi\colon \T^\cont\to\T$
(see \cite[Lemma~2.2]{GTBH}). By Lemma~\ref{Le:char}, there exist pairwise distinct 
indices $\alpha_1,\ldots,\alpha_k\in\cont$ and non-zero integers $n_1,\ldots,n_k$ such 
that $\psi(x)= \prod_{i=1}^k x(\alpha_i)^{n_i}$, for each $x\in \T^\cont$. We can assume
that $\alpha_1<\cdots<\alpha_k$. Since $p_\alpha \circ f = f_\alpha$ for each 
$\alpha\in\cont$, the expression for $\psi(x)$ can be rewritten as follows:
\begin{eqnarray}
\varphi(f(b))=\psi(f(b))=\prod_{i=1}^k f_{\alpha_i}(b)^{n_i}, \mbox{ for each } 
b\in A(X).\label{Eq:1}
\end{eqnarray}

Clearly the set $Y=\bigcup_{i=1}^k Y_{\alpha_i}\subset X$ is countable.\medskip 

\noindent
\textbf{Claim~2.} \textit{The kernel of the homomorphism $\varphi\circ f$ is contained in 
$\hull{Y}$.}\medskip

Indeed, take an arbitrary element $b\in A(X)\setminus \hull{Y}$. Let $Z=X\setminus Y$. 
Then $b=y+z$, where $y\in \hull{Y}$ and $z\in \hull{Z}$. Clearly $z$ is distinct from the 
identity of $A(X)$. For simplicity, let $\alpha=\alpha_k$. It follows from $(\ref{Eq:1})$ that
\begin{eqnarray}
\varphi(f(b)) = f_\alpha(z)^{n_k}\cdot f_\alpha(y)^{n_k}\cdot \prod_{i=1}^{k-1} f_{\alpha_i}(y)^{n_i} 
\cdot \prod_{i=1}^{k-1} f_{\alpha_i}(z)^{n_i}.\label{Eq:2}
\end{eqnarray}
Since $Z\subset Z_\alpha$ and $f_\alpha\hskip-2pt\res Z_\alpha$ is a
monomorphism, we see that $f_\alpha(z)^{n_k}\neq 1$.

Let us put 
$$
A=\hull{f_\alpha(Y_\alpha)\cup\bigcup_{i=1}^{k-1} f_{\alpha_i}(X)},\ 
B=\hull{f_\alpha(Y\setminus Y_\alpha)}, \mbox{ and } C=\hull{f_\alpha(Z)}.
$$
Since $Y=Y_\alpha\cup (Y\setminus Y_\alpha)$ and $y\in \hull{Y}$, the element 
on the right hand side of equality $(\ref{Eq:2})$ is in $C\cdot B\cdot A$. Suppose 
for a contradiction that $\varphi(f(b))=1$. Then, by $(\ref{Eq:2})$, we have that 
$1\neq f_\alpha(z)^{n_k}\in C\cap (B\cdot A)$. Further, the group $B\cdot C$ is
algebraically generated by the set 
$$
f_\alpha(Y\setminus Y_\alpha)\cup f_\alpha(X\setminus Y)=
f_\alpha(X\setminus Y_\alpha)=f_\alpha(Z_\alpha),
$$ 
so condition (iii) of our construction implies that $A\cap (B\cdot C)=\{1\}$. Hence, by
(b) of Lemma~\ref{Le:Alg}, the equality $(A\cdot B)\cap C=B\cap C$ holds. Since 
$Y\setminus Y_\alpha$ and $X\setminus Y$ are disjoint subsets of the independent 
set $Z_\alpha$, condition (ii) implies that the groups $B$ and $C$ have trivial intersection. 
Hence the intersection $(B\cdot A)\cap C$ is trivial. This contradiction shows that 
kernel of $\varphi\circ{f}$ is a subgroup of $\hull{Y}$, which proves Claim~2. 

Since the set $Y$ is countable, Claim~2 implies that $|\ker (\varphi\circ{f})|\leq\omega$.
We already know that $f$ is a monomorphism, so the kernel of $\varphi$ is also
countable. Hence every non-trivial quotient of the group $G=f(A(X))$ is not separable,
according to the implication (b)\,$\Rightarrow$\,(c) in Lemma~\ref{Le:M}.
\end{proof}

We now give an answer to Problem~\ref{SQRfactorizable}.

\begin{corollary}\label{coro:Rfac}
There exist infinite $\mathbb{R}$-factorizable groups without non-trivial separable or
metrizable quotients.
\end{corollary}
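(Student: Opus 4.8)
The plan is to take the group $G$ constructed in Theorem~\ref{Th:Ans} and to verify that it is $\R$-factorizable; once this is done, the two quotient requirements come essentially for free. Recall that $G$ is a dense subgroup of the compact group $\T^\cont$ and that, by Theorem~\ref{Th:Ans}, every quotient of $G$ is either trivial or non-separable, so in particular $G$ has no non-trivial separable quotient. For the metrizable requirement, I would note that any continuous homomorphic image of $G$ is again precompact, and that a \emph{precompact metrizable} group is second countable (its completion is compact and metrizable, hence second countable) and therefore separable. Thus a non-trivial metrizable quotient of $G$ would be a non-trivial separable quotient, which is impossible. Hence $G$ has neither non-trivial separable nor non-trivial metrizable quotients, and it remains only to show that $G$ is $\R$-factorizable.

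To establish $\R$-factorizability I would first set up the natural candidate factorizations. For a countable set $B\sub\cont$ let $\pi_B\colon\T^\cont\to\T^B$ be the projection and $p_B=\pi_B\res G\colon G\to\pi_B(G)$ its restriction. Since $\pi_B$ is open and $G$ is dense in $\T^\cont$, each $p_B$ is an open continuous surjective homomorphism onto the subgroup $\pi_B(G)$ of the second countable group $\T^B$; being a subgroup of a separable metrizable group, $\pi_B(G)$ is itself separable and metrizable. It therefore suffices to prove that every continuous function $f\colon G\to\R$ factors as $f=h\circ p_B$ for some countable $B$ and some continuous $h$ on $\pi_B(G)$.

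The heart of the argument is thus to show that every continuous real-valued function on $G$ \emph{depends on countably many coordinates}. Here I would exploit two features of $G$: its topology is inherited from the product $\T^\cont$, so that a basic neighbourhood of a point constrains only finitely many coordinates; and $G$ has countable cellularity, being dense in the compact group $\T^\cont$, which is a product of the separable spaces $\T$. Using continuity of $f$, around each point one captures the local behaviour of $f$ by finitely many coordinates; a standard closing-off (elementary submodel) argument then assembles these finite sets, using countable cellularity to keep the construction countable, into a single countable set $B\sub\cont$ along whose $p_B$-fibres $f$ is constant. This yields the continuous $h$ with $f=h\circ p_B$ and completes the proof that $G$ is $\R$-factorizable.

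I expect the main obstacle to be precisely this countable-dependence step. The subtlety is that $f$ is defined only on the dense subgroup $G$ and in general need not extend continuously to $\T^\cont$, so one cannot simply invoke the classical theorem that continuous functions on a product of separable spaces depend on countably many coordinates. The closing-off argument must instead be carried out inside $G$, using density and the group structure together with countable cellularity to control, at each countable stage, the oscillation of $f$ and to absorb all coordinates on which $f$ can depend. Once $G$ is known to be $\R$-factorizable, the two quotient observations above show that $G$ witnesses the corollary and gives the promised negative answer to Problem~\ref{SQRfactorizable}.
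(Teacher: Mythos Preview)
Your proposal is correct and follows the same overall route as the paper: take the group $G$ from Theorem~\ref{Th:Ans}, observe that its quotients are precompact and that a precompact metrizable group is separable, so the non-separability of non-trivial quotients from Theorem~\ref{Th:Ans} handles both the separable and the metrizable requirements. The only difference is in how $\R$-factorizability is established. The paper simply invokes the general fact that \emph{every} precompact group is $\R$-factorizable (this is \cite[Corollary~8.1.17]{AT}), which disposes of the issue in one line. You instead sketch a direct proof for this particular $G$, exploiting its concrete realization as a dense subgroup of $\T^\cont$ together with countable cellularity to run a closing-off argument showing that continuous real-valued functions depend on countably many coordinates. Your argument is sound---indeed it is essentially a special case of the standard proof of the cited result---but it is more work than necessary here; once you know the general theorem, precompactness of $G$ is all you need.
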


\begin{proof}
Since the group $G$ in Theorem~\ref{Th:Ans} is 
precompact, it is $\mathbb{R}$-factorizable according to 
\cite[Corollary~8.1.17]{AT}.
Every quotient of $G$ is also precompact.
Notice that 
every precompact metrizable group is separable (this follows e.g. from 
\cite[Proposition~3.4.5]{AT}).
Therefore the precompact group $G$ does not have 
non-trivial separable or metrizable quotients. 
\end{proof}

Our next aim is to show that every power of the group $G$ in Theorem~\ref{Th:Ans}
does not have non-trivial separable quotients. The proof of this fact requires several
preliminary results.

\begin{lemma}\label{Le:ML}
Let $H$ be an abelian topological  group, $k\in\N$, and $n_1,\ldots,n_k$ be integers,
not all equal to zero, such that the highest common divisor of $n_1,\ldots,n_k$ 
equals $1$. Let also $\varphi$ be a homomorphism of $H^k$ to $H$ defined by 
$\varphi(x_1,\ldots,x_k)=n_1x_1+\cdots+n_kx_k$, for each $(x_1,\ldots,x_k)\in H^k$. 
Then the homomorphism $\varphi$ is open and surjective. In particular, the quotient 
group $H^k/N$ is isomorphic as a topological group to $H$, where $N=\ker\varphi$.
\end{lemma}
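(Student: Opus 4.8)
The plan is to prove the two assertions (openness/surjectivity of $\varphi$ and the resulting topological isomorphism) in the natural order, using the fact that $\gcd(n_1,\ldots,n_k)=1$ via a Bezout identity. First I would establish surjectivity. Since $\gcd(n_1,\ldots,n_k)=1$, there exist integers $m_1,\ldots,m_k$ with $\sum_{i=1}^k m_in_i=1$. Given any $x\in H$, the element $(m_1x,\ldots,m_kx)\in H^k$ satisfies $\varphi(m_1x,\ldots,m_kx)=\sum_i n_im_i x=x$, so $\varphi$ is onto. This also suggests the right tool for openness: the map $s\colon H\to H^k$ defined by $s(x)=(m_1x,\ldots,m_kx)$ is a continuous homomorphism and a section of $\varphi$ (that is, $\varphi\circ s=\mathrm{id}_H$).

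The main point is openness. I would prove it directly from the definition. Let $U$ be an open neighborhood of the identity $0$ in $H^k$. I need to show $\varphi(U)$ is a neighborhood of $0$ in $H$. Choose a symmetric open neighborhood $V$ of $0$ in $H$ small enough that $m_iV\subset U_i$ for a suitable basic box $\prod_i U_i\subset U$, where each $U_i$ is an open neighborhood of $0$ in $H$; more precisely, pick $V$ with $(m_1V,\ldots,m_kV)\subset U$, which is possible because scalar multiplication by a fixed integer and finite intersection are continuous operations. Then for every $x\in V$ we have $s(x)\in U$ and $\varphi(s(x))=x$, so $V\subset\varphi(U)$. Hence $\varphi(U)$ contains a neighborhood of $0$, and by translation (since $\varphi$ is a homomorphism) $\varphi$ carries every open set to an open set. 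I expect this section-based argument to be the cleanest route, sidestepping any appeal to general open mapping theorems, which would in any case be unavailable for arbitrary topological groups without completeness or Baire hypotheses.

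Finally, with $\varphi$ established as a continuous, open, surjective homomorphism, the isomorphism theorem for topological groups (the canonical factorization of an open continuous surjective homomorphism through its kernel) immediately yields that $H^k/N$ is isomorphic as a topological group to $H$, where $N=\ker\varphi$. I would simply invoke that the induced map $\overline\varphi\colon H^k/N\to H$ is a continuous open bijective homomorphism, hence a topological isomorphism.

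The part requiring the most care is the openness step: one must verify that a single neighborhood $V$ can be chosen so that the section $s$ maps $V$ into the prescribed neighborhood $U$, which relies only on continuity of the finitely many integer-scaling maps $x\mapsto m_ix$ at $0$. I do not anticipate a genuine obstacle, since the existence of the continuous section $s$ makes openness almost automatic; the only subtlety is being explicit that $\varphi\circ s=\mathrm{id}_H$ forces $V=\varphi(s(V))\subset\varphi(U)$.
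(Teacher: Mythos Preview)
Your proposal is correct and follows essentially the same approach as the paper: both use a Bezout identity to build the continuous section $s(x)=(m_1x,\ldots,m_kx)$ with $\varphi\circ s=\mathrm{id}_H$, derive surjectivity immediately, and obtain openness from $V\subset\varphi(s(V))\subset\varphi(U)$ once $V$ is chosen small enough that $s(V)\subset U$. The only cosmetic difference is that the paper makes the smallness of $V$ explicit by taking $M=\max_i|m_i|$ and requiring the $M$-fold sum of a symmetric $W$ to lie in a box factor, whereas you invoke continuity of each map $x\mapsto m_ix$ directly; both justifications are equivalent.
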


\begin{proof}
It is clear that the homomorphism $\varphi$ is continuous. Since $\varphi(H^k)=H$, 
it suffices to verify that $\varphi$ is open. It follows from our assumption about 
$n_1,\ldots,n_k$ that there exist integers $m_1,\ldots,m_k$ such that $m_1n_1+
\cdots+m_kn_k=1$. Consider the homomorphism $\lambda$ of $H$ to $H^k$ 
defined by $\lambda(x)=(m_1x,\ldots,m_kx)$. The continuity of $\lambda$ 
is evident. Notice that $\varphi\circ\lambda=Id_H$. In particular, $\varphi$
is surjective, as we mentioned it above. 

Take an arbitrary open neighborhood $U$ of the identity element in $H^k$. We 
can assume that $U=V\times\cdots\times V$, for some open neighborhood $V$ 
of the identity element in $H$ ($V$ is taken $k$ times as a factor). Let 
$M=\max\{|m_1|,\ldots,|m_k|\}$ and choose an open symmetric neighborhood 
$W$ of the identity in $H$ such that
$$
\underbrace{W+\cdots+W}_{M\ \rm times}\subset V.
$$
It follows from our choice of $M$ and $W$ that $\lambda(W)\subset U$. Further,
the equality $\varphi\circ\lambda=Id_H$ implies that $W=\varphi(\lambda(W))\subset 
\varphi(U)$, so the set $\varphi(U)$ contains a nonempty open neighborhood of
the identity. Hence the homomorphism $\varphi$ is open. 
\end{proof}

The conclusion of the next lemma is only formally stronger than that of 
Theorem~\ref{Th:Ans}. 

\begin{lemma}\label{Le:Dis}
Let $G\subset \T^\cont$ be the group constructed in Theorem~\ref{Th:Ans} and let $H$ 
be a quotient group of $G$. Then every countable subgroup of $H$ is $h$-embedded
and closed in $H$, while every quotient group of $H$ is either trivial or non-separable.
\end{lemma}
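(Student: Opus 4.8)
The plan is to reduce the claim about an arbitrary quotient $H$ of $G$ to the structural properties of $G$ itself that were already established in Theorem~\ref{Th:Ans}. Write $H=G/K$ for some closed normal (equivalently, since everything is abelian, closed) subgroup $K$ of $G$, and let $q\colon G\to H$ be the quotient homomorphism. The three assertions to verify are: (1) every countable subgroup of $H$ is $h$-embedded; (2) every countable subgroup of $H$ is closed; and (3) every nontrivial quotient of $H$ is non-separable. My strategy is to prove (1) first, then derive (2) from (1) via Lemma~\ref{Le:ACDT} exactly as was done for $G$, and finally obtain (3) by invoking the implication (b)\,$\Rightarrow$\,(c) of Lemma~\ref{Le:M}, for which I must also check that $H$ satisfies the hypotheses of that lemma (uncountable, precompact, abelian, with every countable subgroup closed).

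For step (1), let $D$ be a countable subgroup of $H$ and let $h\colon D\to\T$ be an arbitrary homomorphism. I would lift $D$ to a countable subgroup of $G$: choose a countable subgroup $C\subseteq G$ with $q(C)=D$ (pick one preimage of each generator of $D$). Then $h\circ(q\res C)$ is a homomorphism of $C$ into $\T$, and since every countable subgroup of $G$ is $h$-embedded by Theorem~\ref{Th:Ans}, it extends to a continuous character $\chi$ of $G$. The obstacle here is that $\chi$ need not descend to $H$: to get a character of $H$ extending $h$ I need $\chi$ to vanish on $K=\ker q$. This is the main difficulty, and I expect it is handled by the extra density information rather than by a direct factorization. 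Concretely, $\chi$ agrees with $h\circ q$ on $C$, but $h\circ q$ is already defined and continuous on all of $q^{-1}(D)$; I would instead extend the homomorphism $h\circ q$ restricted to the (possibly uncountable) subgroup $q^{-1}(D)$, or argue that since $K\subseteq\ker(h\circ q)$ the extension can be chosen trivial on $K$. Since $G$ is precompact, $h$-embeddedness of a countable subgroup is equivalent to that subgroup carrying the Bohr topology; so the cleanest route is to show $D$ inherits the Bohr topology of $H$, using that $q$ is open and continuous and that the corresponding countable subgroup upstairs is Bohr in $G$.

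Once (1) is established, step (2) is immediate: Lemma~\ref{Le:ACDT} says that if every countable subgroup of an abelian topological group is $h$-embedded, then every countable subgroup is closed, so applying it to $H$ gives closedness of all countable subgroups of $H$. For step (3), I first note that $H$ is abelian (a quotient of an abelian group) and precompact (a quotient of a precompact group is precompact). If $H$ is countable the conclusion is vacuous or handled trivially; so assume $H$ is uncountable. Then $H$ satisfies all the standing hypotheses of Lemma~\ref{Le:M}: it is uncountable, precompact, abelian, and by step (2) every countable subgroup is closed. It therefore remains to verify condition (b) of Lemma~\ref{Le:M} for $H$, namely that every nontrivial continuous character of $H$ has countable kernel. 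Given a nontrivial continuous character $\eta\colon H\to\T$, the composition $\eta\circ q$ is a nontrivial continuous character of $G$, whose kernel is countable by Theorem~\ref{Th:Ans} (via the established equivalence (b) for $G$). Since $\ker(\eta\circ q)=q^{-1}(\ker\eta)\supseteq K$, and this preimage is countable, in particular $\ker\eta=q(\ker(\eta\circ q))$ is countable as a continuous image of a countable set. Thus $H$ satisfies (b), and the implication (b)\,$\Rightarrow$\,(c) of Lemma~\ref{Le:M} yields that every quotient group of $H$ is either trivial or non-separable, completing the proof. The only genuinely delicate point is the $h$-embedding lift in step (1); everything else is routine transport of properties across the open continuous surjection $q$.
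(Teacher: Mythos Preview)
Your step~(3) is correct but more elaborate than necessary: a quotient of $H=G/K$ is canonically a quotient of $G$ (namely $(G/K)/(N/K)\cong G/N$), so the non-separability of every nontrivial quotient of $H$ follows in one line from the corresponding property of $G$ in Theorem~\ref{Th:Ans}. There is no need to re-verify condition~(b) of Lemma~\ref{Le:M} for $H$.

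The genuine gap is in step~(1), and you have correctly isolated it: after lifting $D$ to a countable $C\subseteq G$ by choosing preimages and extending $h\circ (q\res C)$ to a continuous character $\chi$ of $G$, you have no control over $\chi\res K$, so $\chi$ need not descend to $H$. Your suggested workarounds (Bohr topology, or extending on the ``possibly uncountable'' $q^{-1}(D)$) are left unresolved. The missing observation\,---\,which you actually prove inside your own step~(3)\,---\,is that $K=\ker q$ is \emph{countable} whenever $H$ is nontrivial: pick any nontrivial continuous character $\eta$ on the precompact group $H$; then $\eta\circ q$ is a nontrivial continuous character on $G$ with countable kernel (Theorem~\ref{Th:Ans} together with Lemma~\ref{Le:M}), and $K\subseteq\ker(\eta\circ q)$. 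Once $|K|\le\omega$, the full preimage $q^{-1}(D)$ is itself a \emph{countable} subgroup of $G$, so you may simply take $C=q^{-1}(D)$. Then $K\subseteq C$ and $(h\circ q)\res K$ is trivial, hence the continuous extension $\chi$ of $h\circ q\res C$ satisfies $K\subseteq\ker\chi$; by \cite[Corollary~1.5.11]{AT} it factors as $g\circ q$ for a continuous character $g$ of $H$ extending $h$. This is exactly the paper's argument: establish the countability of $K$ first, and your ``delicate point'' evaporates.
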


\begin{proof}
Since every quotient group of $H$ is also a quotient of $G$, the second part 
of the conclusion is immediate from Theorem~\ref{Th:Ans}. Therefore, according
to Lemma~\ref{Le:ACDT}, it suffices to verify that every countable subgroup of
$H$ is $h$-embedded. 

If $H$ is trivial, there is nothing to prove. Assume therefore that $|H|>1$. Denote
by $p$ an open continuous homomorphism of $G$ onto $H$ and let $K$ be the
kernel of $p$. Clearly the group $H$ is precompact and, hence, there is a non-trivial
continuous character $\chi$ on $H$. Since the kernel of $p$ is contained in the 
kernel of the continuous character $\chi\circ{p}$ of $G$ and each non-trivial
character of $G$ has countable kernel, we conclude that the kernel of $p$ is
countable. 

Let $C$ be a countable subgroup of $H$ and $f$ a homomorphism of $C$ to $\T$.
Then $D=p^{-1}(C)$ is a countable subgroup of $G$ and $f_D=f\circ p\res D$
is a homomorphism of $D$ to $\T$. Since $D$ is $h$-embedded in $G$, $f_D$
admits an extension to a \emph{continuous} character $f^\ast$ on $G$. It follows 
from $D=p^{-1}(C)$ and the definition of $f_D$ and $f^\ast$ that $\ker{p}\subset 
\ker f_D\subset \ker{f^\ast}$. Therefore, by \cite[Corollary~1.5.11]{AT}, there exists 
a continuous character $g$ of $H$ satisfying $f^\ast=g\circ{p}$. Then $g\circ{p}\res D
=f^\ast\hskip-1.4pt\res D=f_D=f\circ{p}\res D$, whence it follows that $g$ extends
$f$. Hence the group $C=p(D)$ is $h$-embedded in $H$.
\end{proof}

In the following lemma we establish that one of the properties of the group $G$
constructed in Theorem~\ref{Th:Ans} is finitely productive. 

\begin{lemma}\label{Le:FP}
Let $H_1,\ldots,H_n$ be abelian topological groups such that for each $i\leq n$,
all countable subgroups of $H_i$ are $h$-embedded. Then all countable subgroups
of the group $H=H_1\times\cdots\times H_n$ are also $h$-embedded.
\end{lemma}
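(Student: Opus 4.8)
The plan is to reduce to the case $n=2$ by an easy induction on $n$, writing $H_1\times\cdots\times H_n=(H_1\times\cdots\times H_{n-1})\times H_n$ and invoking the inductive hypothesis to guarantee that all countable subgroups of $H'=H_1\times\cdots\times H_{n-1}$ are $h$-embedded. The base case $n=1$ is trivial, so the whole content lies in the two-factor case, which I would prove as follows.

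Let $H=H_1\times H_2$, let $D$ be a countable subgroup of $H$, and let $f\colon D\to\T$ be an arbitrary homomorphism; the goal is to extend $f$ to a continuous character of $H$. Denote by $\pi_1,\pi_2$ the two projections of $H$ onto its factors and put $D_1=\pi_1(D)\le H_1$ and $K=D\cap(\{0\}\times H_2)$. Both $D_1$ and $K$ are countable, and since $\pi_2$ is injective on $K$, the group $K$ is isomorphic via $\pi_2\res K$ to a countable subgroup $L_2=\pi_2(K)$ of $H_2$.

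The key device is to handle $K$ first and then twist $f$ so that the remainder factors through the first coordinate. First I would transport $f\res K$ to a homomorphism $h_2\colon L_2\to\T$ using the isomorphism $\pi_2\res K$, and extend $h_2$ to a continuous character $\chi_2$ of $H_2$, which is possible because all countable subgroups of $H_2$ are $h$-embedded. Next, consider the homomorphism $f'\colon D\to\T$ given by $f'(d)=f(d)\cdot\chi_2(\pi_2(d))^{-1}$. By the choice of $\chi_2$, the character $f'$ is trivial on $K$; since $\ker(\pi_1\res D)=K$, it descends to a homomorphism $g_1\colon D_1\to\T$ with $g_1\circ\pi_1=f'$ on $D$. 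Extending $g_1$ to a continuous character $\chi_1$ of $H_1$ (again using the hypothesis, now on $H_1$) and setting $\chi(x_1,x_2)=\chi_1(x_1)\cdot\chi_2(x_2)$ produces a continuous character of $H_1\times H_2$; a direct check gives $\chi(d)=g_1(\pi_1 d)\cdot\chi_2(\pi_2 d)=f'(d)\cdot\chi_2(\pi_2 d)=f(d)$ for every $d\in D$, so $\chi$ extends $f$, as required.

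The only point requiring care — and the main obstacle — is that $D$ need not split as a product $D_1\times D_2$, so $f$ cannot be decomposed coordinatewise at the outset; the extend-on-the-kernel-then-twist step is exactly what circumvents this. I would also record the elementary fact that, for continuous characters $\chi_1$ of $H_1$ and $\chi_2$ of $H_2$, the map $\chi_1\circ\pi_1\cdot\chi_2\circ\pi_2$ is a continuous character of $H_1\times H_2$; this is immediate and is all that is needed here about characters of products.
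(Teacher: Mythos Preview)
Your argument is correct. The induction to reduce to $n=2$ is sound, and in the two-factor case the ``extend on the kernel, then twist'' device works exactly as you describe: $f'$ vanishes on $K=\ker(\pi_1\res D)$, so it descends to $D_1$, and the resulting $\chi=\chi_1\circ\pi_1\cdot\chi_2\circ\pi_2$ restricts to $f$ on $D$.

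The paper takes a different and somewhat shorter route. Rather than inducting and working with the kernel of a projection, it observes that $C$ sits inside the countable product $D=C_1\times\cdots\times C_n$ of its coordinate projections, invokes the divisibility of $\T$ to extend $f$ to a homomorphism $f_D\colon D\to\T$, and then reads off the coordinate homomorphisms $f_i\colon C_i\to\T$ directly from $f_D$ (since $D$ is a genuine product). Each $f_i$ is then extended to a continuous character $g_i$ of $H_i$ by hypothesis, and the product $g_1\cdots g_n$ does the job in one step for all $n$. So the paper resolves the obstacle you flag\,---\,that $C$ need not split as a product\,---\,by enlarging to a product subgroup via divisibility, whereas you resolve it by a short-exact-sequence style twist. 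Your approach has the minor advantage of not appealing to divisibility of $\T$; the paper's has the advantage of handling all $n$ at once without induction.
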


\begin{proof}
Let $C$ be a countable subgroup of $H$ and $f\colon C\to\T$ a homomorphism. 
For every $i\leq n$, let $C_i=p_i(C)$, where $p_i\colon H\to H_i$ is the projection. 
Then $C$ is a subgroup of the countable group $D=C_1\times\cdots\times C_n$. 
Since the group $\T$ is divisible, $f$ extends to a homomorphism $f_D\colon D
\to\T$. Taking the restrictions of $f_D$ to the factors, we can find homomorphisms
$f_i\colon C_i\to\T$, where $i=1,\ldots,n$ such that 
$$
f_D(x_1,\ldots,x_n)=f_1(x_1)\cdots f_n(x_n)
$$
for each $(x_1,\ldots,x_n)\in C_1\times\cdots\times C_n$. For every $i\leq n$,
there exists a continuous character $g_i$ on $H_i$ extending $f_i$. We define
a continuous character $g$ on $H$ by
$$
g(y_1,\ldots,y_n)=g_1(y_1)\cdots g_n(y_n),
$$
for each $(y_1,\ldots,y_n)\in H_1\times\cdots\times H_n$. Then $g$ extends both
$f_D$ and $f$. So the group $C$ is $h$-embedded in $H$.
\end{proof}

Let $p\colon G\to H$ be a continuous surjective homomorphism of topological
groups and $\tau_H$ be the topology of $H$. Denote by $\tau_H^p$ the finest 
topology on $H$ such that the mapping $p$ of $G$ to $G^\ast=(H,\tau_H^p)$ 
is continuous. It is clear that $G^\ast$ is a topological group, $\tau_H\subset\tau_H^p$, 
and the homomorphism $p\colon G\to G^\ast$ is open. We will say that $\tau_H^p$
is the \emph{$p$-quotient} topology on $H$. It follows that the groups $G^\ast$
and $G/K$ are  isomorphic as topological groups, where $K$ is the kernel of $p$.
\smallskip 

The proof of the following lemma is elementary, so we omit it. 

\begin{lemma}\label{Le:El}
Let $p\colon G\to H$, $q\colon H\to K$ and $r\colon G\to K$ be continuous
surjective homomorphisms of topological groups satisfying $r=q\circ p$. 
Then the $r$-quotient topology on $K$ is finer than the $q$-quotient topology
of $K$. If the homomorphism $p$ is open, then two topologies on $K$ coincide.
\end{lemma}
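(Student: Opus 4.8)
The plan is to replace the abstract ``finest topology making the map continuous'' by its explicit description and then reduce both assertions to elementary manipulations of preimages. First I would record that, by the very definition of the $p$-quotient topology applied to $r$ and to $q$, we have the concrete descriptions
\[
\tau_K^r=\{U\subseteq K:\ r^{-1}(U)\text{ is open in }G\},
\qquad
\tau_K^q=\{U\subseteq K:\ q^{-1}(U)\text{ is open in }H\},
\]
where in the second set $H$ carries its given topology $\tau_H$. (Here one uses that the finest topology on the target making a fixed surjection continuous is precisely the quotient topology: a set is declared open exactly when its preimage is open.) Once this is in hand, the hypothesis $r=q\circ p$ gives the basic set-theoretic identity $r^{-1}(U)=p^{-1}\big(q^{-1}(U)\big)$ for every $U\subseteq K$, which drives everything.

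For the first assertion I would prove the inclusion $\tau_K^q\subseteq\tau_K^r$ (which is exactly the statement that $\tau_K^r$ is finer). Take $U\in\tau_K^q$, so $q^{-1}(U)$ is open in $H$. Since $p\colon G\to H$ is continuous with respect to $\tau_H$, the set $p^{-1}\big(q^{-1}(U)\big)$ is open in $G$; but this set equals $r^{-1}(U)$ by the identity above, so $U\in\tau_K^r$. This handles the first half without any hypothesis on $p$.

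For the second assertion, assuming $p$ open, I would establish the reverse inclusion $\tau_K^r\subseteq\tau_K^q$, so that the two topologies coincide. Take $U\in\tau_K^r$, i.e.\ $r^{-1}(U)=p^{-1}\big(q^{-1}(U)\big)$ is open in $G$. Because $p$ is open, its image $p\big(r^{-1}(U)\big)=p\big(p^{-1}(q^{-1}(U))\big)$ is open in $H$; and because $p$ is surjective we have $p\big(p^{-1}(W)\big)=W$ for every $W\subseteq H$, so $p\big(p^{-1}(q^{-1}(U))\big)=q^{-1}(U)$. Hence $q^{-1}(U)$ is open in $H$, i.e.\ $U\in\tau_K^q$. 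There is no real obstacle in this lemma; the only points requiring care are keeping straight that in $\tau_K^q$ the group $H$ must be given its original topology $\tau_H$ (not its own quotient topology), and noting that surjectivity of $p$—not merely openness—is what converts the open image $p\big(p^{-1}(q^{-1}(U))\big)$ back into $q^{-1}(U)$ itself.
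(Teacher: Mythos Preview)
Your proof is correct. The paper in fact omits the proof of this lemma entirely, remarking only that it is elementary and left to the reader; your argument supplies exactly the routine verification the authors had in mind.
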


The next lemma extends the conclusion of Lemma~\ref{Le:Dis} to finite
powers of the group $G$.

\begin{thm}\label{Th:DisG}
Let $G\subset \T^\cont$ be the group constructed in Theorem~\ref{Th:Ans}
and $k\geq 1$ be an integer. Then every quotient group of $G^k$ is either 
trivial or non-separable.
\end{thm}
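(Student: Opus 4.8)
The plan is to prove the statement by induction on $k$, reducing the general case to a clever application of the already-established structural properties of $G$, together with the openness lemma (Lemma~\ref{Le:ML}) and the finite productivity of $h$-embeddedness (Lemma~\ref{Le:FP}). For $k=1$ the statement is exactly Theorem~\ref{Th:Ans}. By Lemma~\ref{Le:FP}, all countable subgroups of $G^k$ are $h$-embedded, and hence by Lemma~\ref{Le:ACDT} they are closed in $G^k$. Thus $G^k$ is an uncountable precompact abelian group in which every countable subgroup is closed, so by Lemma~\ref{Le:M} it suffices to verify condition (b): \emph{the kernel of every non-trivial continuous character of $G^k$ is countable.}

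Accordingly, the core of the argument is to analyze an arbitrary non-trivial continuous character $\chi$ of $G^k$. Since $G^k$ is a subgroup of $(\T^\cont)^k\cong\T^\cont$, the character $\chi$ extends to a continuous character $\psi$ of $(\T^\cont)^k$ by \cite[Lemma~2.2]{GTBH}, and by Lemma~\ref{Le:char} such a $\psi$ has the form $\psi(x^{(1)},\ldots,x^{(k)})=\prod_{j=1}^k\prod_i x^{(j)}(\beta^{(j)}_i)^{m^{(j)}_i}$ for finitely many indices and integers. First I would group the coordinates by factor: writing $\chi_j$ for the character on the $j$th copy of $G$ obtained from the exponents attached to that factor, one gets $\chi(g_1,\ldots,g_k)=\prod_{j=1}^k\chi_j(g_j)$ where each $\chi_j$ is a continuous character of $G$. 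The decisive point is that since $\chi$ is non-trivial, at least one $\chi_j$ is non-trivial, and by Theorem~\ref{Th:Ans} (via the implication used there) that $\chi_j$ has countable kernel in $G$.

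The main obstacle will be passing from ``one coordinate character has countable kernel'' to ``$\chi$ itself has countable kernel on $G^k$,'' because the kernel of a product character $\prod_j\chi_j$ can be much larger than the individual kernels — it contains all tuples on which the factors cancel. The plan to overcome this is to use the openness machinery: after relabeling, suppose $\chi_1,\ldots,\chi_r$ are the non-trivial factors ($r\geq 1$) and $\chi_{r+1},\ldots,\chi_k$ are trivial. On those trivial factors $\chi$ imposes no constraint, so I must instead show the kernel is countable \emph{modulo} the trivial directions collapses correctly. This is where I would invoke the $p$-quotient formalism (Lemma~\ref{Le:El}) and Lemma~\ref{Le:ML}: the key is that a non-trivial character $\chi_j$ on $G$, arising from exponents with highest common divisor $d_j$, factors through the $d_j$-th power map and the corresponding open surjection $G\to G$ of Lemma~\ref{Le:ML}, so $\ker\chi_j$ is an extension of a finite group by a copy of the countable kernel guaranteed by Theorem~\ref{Th:Ans}, hence countable. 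One then checks that $\chi$ itself factors through the open continuous homomorphism $G^k\to G^r$ (projection onto the non-trivial factors), and reduces to the case where all factors are non-trivial.

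For that reduced case I would argue directly on the kernel. Using $\chi(g_1,\ldots,g_k)=\prod_j\chi_j(g_j)$ with every $\chi_j$ non-trivial, I intend to exhibit a continuous open surjection from $G^k$ onto $G$ whose composition with a suitable character equals $\chi$, so that by Lemma~\ref{Le:El} the relevant quotient topologies match and $\ker\chi$ is the preimage of a countable-kernel character under a map with controlled fibers. Concretely, I expect to combine the common-divisor normalization of Lemma~\ref{Le:ML} (pulling out the gcd of all exponents so the residual map is open onto $G$) with the fact that each $\chi_j$ individually has countable kernel, and then a finite-induction or direct counting argument to conclude $|\ker\chi|\leq\omega$. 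Having established condition (b) of Lemma~\ref{Le:M} for $G^k$, the implication (b)$\Rightarrow$(c) of that lemma immediately yields that every quotient of $G^k$ is either trivial or non-separable, completing the proof.
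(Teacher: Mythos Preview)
Your strategy has a fatal gap: condition~(b) of Lemma~\ref{Le:M} is simply \emph{false} for $G^k$ once $k\geq 2$, so the reduction you propose cannot succeed. Concretely, take any non-trivial continuous character $\chi_1$ on $G$ (e.g.\ a coordinate projection $p_\alpha$) and define $\chi(g_1,g_2)=\chi_1(g_1)\cdot\chi_1(g_2)^{-1}$ on $G^2$. Both factor characters are non-trivial, yet $\ker\chi$ contains the diagonal $\{(g,g):g\in G\}$, which has cardinality~$\cont$. More simply, $\chi(g_1,g_2)=\chi_1(g_1)$ is a non-trivial character whose kernel $(\ker\chi_1)\times G$ is uncountable. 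Equivalently, condition~(a) of Lemma~\ref{Le:M} fails too: $G\times\{e\}^{k-1}$ is an uncountable closed proper subgroup of $G^k$. Your attempt to ``reduce to the case where all factors are non-trivial'' does not rescue the argument, since the diagonal example above already lives in that case; and factoring through the projection $G^k\to G^r$ only enlarges the kernel. The vague appeal to Lemma~\ref{Le:ML} does not help either: that lemma produces open maps $H^k\to H$ of a very specific algebraic form, and an arbitrary product $\prod_j\chi_j(g_j)$ of characters on $G$ does not factor through such a map followed by a single character on~$G$.

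The paper therefore does \emph{not} try to verify Lemma~\ref{Le:M}(b) for $G^k$. Instead, given a non-trivial quotient $H$ of $G^k$, it passes to a further quotient $H^\ast=\chi^\ast(G^k)\subset\T$ (with the $\chi^\ast$-quotient topology) and shows directly that $H^\ast$ is uncountable with all countable subgroups closed, hence non-separable. The heart of the argument is a delicate coordinate analysis of $\chi^\ast$ that reaches back into the explicit recursive construction of $G$ in Theorem~\ref{Th:Ans}\,---\,specifically condition~(iv) there\,---\,to prove that the multiplication map $P\colon H_1\times\cdots\times H_n\to\T$ restricted to the relevant subgroups has \emph{countable} kernel (this is Claim~3 in the paper's proof). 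Lemmas~\ref{Le:ML}, \ref{Le:Dis}, \ref{Le:FP} and~\ref{Le:El} are then used to transfer $h$-embeddedness and closedness of countable subgroups down to $H^\ast$. In short, the obstacle you identified (cancellation among the factor characters) is real, and overcoming it requires going back to the internal structure of $G$ rather than using only the abstract properties packaged in Lemma~\ref{Le:M}.
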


\begin{proof}
Let $\varphi\colon G^k\to H$ be an open continuous homomorphism onto
a topological group $H$ with $|H|>1$. It is clear that the group $H$ is precompact
and abelian. Hence there exists a non-trivial continuous character $\chi$ on $H$. 
Then $\chi^\ast=\chi\circ\varphi$ is a non-trivial continuous character on the dense
subgroup $G^k$ of $(\T^\cont)^k$. Let the group $H^\ast=\chi^\ast(G^k)$ carry the 
$\chi^\ast$-quotient topology. By Lemma~\ref{Le:El}, $H^\ast$ is a continuous 
homomorphic image of $H$, so it suffices to prove that $H^\ast$ is not separable. 

Denote by $\psi$ an extension of $\chi^\ast$ to a continuous character of $(\T^\cont)^k$. 
Then there exist continuous characters $\psi_1,\ldots,\psi_k$ on $\T^\cont$ such that 
$\psi(x_1,\ldots,x_k)=\prod_{i=1}^k \psi_i(x_i)$, for all $x_1,\ldots,x_k\in\T^\cont$.

It follows from Lemma~\ref{Le:char} that for every $i\in\{1,\ldots,k\}$, one can find 
ordinals $\alpha_{i,1}<\alpha_{i,2}<\cdots<\alpha_{i,n_i}$ in $\cont$ and integers 
$m_{i,1},m_{i,2},\ldots,m_{i,n_i}$ such that 
$$
\psi_i(x)=\prod_{j=1}^{n_i} x(\alpha_{i,j})^{m_{i,j}}, \mbox{ for each } x\in\T^\cont.
$$
Some of the integers $m_{i,1},m_{i,2},\ldots,m_{i,n_i}$ can be zeros. Therefore
we can assume that the sets $A_i=\{\alpha_{i,1},\alpha_{i,2},\ldots,\alpha_{i,n_i}\}$
coincide for all $i\leq k$. Let $A_1=A_2=\cdots=A_k=A$. In particular, $n_i=n_{i'}=n$ 
for all $i,i'\leq k$ and, for each $j\leq n$, there exists an ordinal $\alpha_j\in\cont$ 
such that $\alpha_{1,j}=\alpha_{2,j}=\cdots=\alpha_{k,j}=\alpha_j$; hence 
$A=\{\alpha_1,\ldots,\alpha_n\}$. In other words, the columns of the rectangular 
$k\times n$ matrix $\big(\alpha_{i,j}\big)$ are constant. Hence, combining the 
expressions for the characters $\psi$ and $\psi_i$, $1\leq i\leq k$, and changing 
the order of multiplication, we get the following equality:
\begin{eqnarray}
\psi(x_1,\ldots,x_k)=\prod_{j=1}^n\prod_{i=1}^k x_i(\alpha_{j})^{m_{i,j}}.\label{Eq:4}
\end{eqnarray}
Furthermore, we can additionally assume that the set $A\subset\cont$ is minimal by 
inclusion among those that admit a representation of $\psi$ in the form $(\ref{Eq:4})$. 
Therefore $\sum_{i=1}^k |m_{i,j}|>0$, for each $j\in\{1,\ldots,n\}$.

For every $j\leq n$, denote by $\lambda_j$ the continuous character on $(\T^\cont)^k$ 
defined by 
\begin{eqnarray}	
\lambda_j(x_1,\ldots,x_k)=\prod_{i=1}^k x_i(\alpha_j)^{m_{i,j}}, \label{Eq:4.1}
\end{eqnarray}
where $x_1,\ldots,x_k\in\T^\cont$. It follows from $(\ref{Eq:4})$ and $(\ref{Eq:4.1})$ 
that $\psi=\lambda_1\cdot\ldots\cdot\lambda_n$. For every $j\leq n$, the character 
$\lambda_j$ can be represented in the form $\lambda_j^\ast\circ (p_{\alpha_j})^k$, 
where $p_{\alpha_j}$ is the projection of $\T^\cont$ to the factor $\T_{(\alpha_j)}$ 
and $\lambda^\ast_j$ is the continuous character on $\T_{(\alpha_j)}^k$ defined by
$$
\lambda^\ast_j(t_1,\ldots,t_k)=t_1^{m_{1,j}}\cdot\ldots\cdot t_k^{m_{k,j}}.
$$ 

We supply the subgroup $H_j=p_{\alpha_j}(G)$ of $\T$ with the $q_j$-quotient 
topology, where $q_j=p_{\alpha_j}\hskip-1.5pt\res G$ and $1\leq j\leq n$. Since
the kernel of $q_j$ is countable, the group $H_j$ is uncountable. By 
Lemma~\ref{Le:Dis}, every countable subgroup of $H_j$ is $h$-embedded. 
Let $M_j$ be the maximal common divisor of the integers $m_{1,j},\ldots,m_{k,j}$ 
(we recall that at least one of these integers is distinct from zero). For each $i\leq k$, 
we put $n_{i,j}=m_{i,j}/M_j$ and consider the character $\delta_j$ on $\T^k$ defined by
$$
\delta_j(t_1,\ldots,t_k)=t_1^{n_{1,j}}\cdot \ldots \cdot t_k^{n_{k,j}}.
$$
It is clear from the definition that $\lambda_j^\ast=\delta_j^{M_j}$. Since the maximal
common divisor of $n_{1,j},\ldots,n_{k,j}$ is equal to $1$, it follows from Lemma~\ref{Le:ML} 
that the homomorphism $\delta_j\colon H_j^k\to H_j$ is open and surjective. 
Notice that the group $H_j$ is algebraically a subgroup of $\T$.

Let $v_j\colon \T\to \T$ be the homomorphism defined by $v_j(t)=t^{M_j}$, for each
$t\in \T$. It follows from $\lambda_j^\ast=\delta_j^{M_j}$ that $\lambda_j^\ast=v_j\circ 
\delta_j$. Clearly the kernel of $v_j$ is finite and the restriction $v_j\hskip-1pt\res H_j$ 
is continuous when considered as a homomorphism of $H_j$ to itself. Hence the group
$\lambda_j^\ast(H_j^k)=v_j(H_j)$ is uncountable.
\[
\xymatrix{H_j^k\ar@{>}[r]^{\delta_j} \ar@{>}[d]_{\lambda^\ast_j} & H_j  \ar@{>}[dl]^{v_j}  \\
H_j & }
\]
Strictly speaking, one should replace $\delta_j$, $\lambda_j^\ast$ and $v_j$ in the 
above diagram by $\delta_j\res H_j^k$, $\lambda_j^\ast\res H_j^k$ and $v_j\hskip-1pt\res H_j$, 
respectively.

We define a homomorphism 
$$
\lambda^\ast\colon (\T_{(\alpha_1)})^k\times \cdots
\times (\T_{(\alpha_n)})^k \to \T_{(\alpha_1)}\times\cdots\times \T_{(\alpha_n)}
$$ 
by $\lambda^\ast(y_1,\ldots,y_n)=(\lambda_1^\ast(y_1),\ldots,\lambda_n^\ast(y_n))$. 
In other words, $\lambda^\ast=\lambda_1^\ast\times\cdots\times \lambda_n^\ast$. 
Since each group $H_j$ is algebraically identified with the subgroup $p_{\alpha_j}(G)$ 
of $\T$ we have that $\lambda^\ast(H_1^k\times\cdots\times H_n^k)\subset H_1\times
\cdots\times H_n$. Further, the multiplication mapping $P$ of $\T^n$ to $\T$ defined 
by $P(t_1,\ldots,t_n)=t_1\cdots t_n$ is a continuous character on $\T^n$. It follows 
from the equality 
$$
\psi=\lambda_1\cdot\ldots\cdot\lambda_n=P\circ (\lambda_1^\ast\times\cdots
\times\lambda_n^\ast) \circ (p_{\alpha_1}^k\times\cdots\times p_{\alpha_n}^k)
$$ 
and our definition of $\lambda^\ast$ that $\chi^\ast=\psi\res G^k=
P\circ \lambda^\ast\circ (p_A)^k\res G^k$, where $p_A$ is the projection of 
$\T^\cont$ to $\T^A$. Let $P^\ast$ be the restriction of $P$ to the (abstract) 
subgroup $S=\lambda^\ast(p_A(G)^k)$ of $\T^n$. We consider $S$ with the 
topology inherited from $H_1\times\cdots\times H_n$.
\[
\xymatrix{G^k \ar@{>}[r]^{\chi^\ast} \ar@{>}[d]_{p_A^k} & H^\ast  \ar@{>}[r]^{id} & \T  \\
(p_A(G))^k \ar@{>}[r]^{\,\,\,\,\,\,\,\lambda^\ast}  & S\, \ar@{>}[u]_{P^\ast} \ar@{^{(}->}[r]
& H_1\times\cdots \times H_n \ar@{>}[r]^{\;\;\;\;\;\;\;\;id}   & 
\T^n \ar@{>}[ul]_{P}}
\]
The natural isomorphic embeddings of the abstract groups $H^\ast$ to $\T$ and 
$H_1\times\cdots\times H_n$ to $\T^n$ in the above diagram are denoted by the 
same symbol $id$.

Our next step is to verify the following:
\smallskip\smallskip

\textbf{Claim~3.} \textit{The kernel of the homomorphism $P\colon H_1\times\cdots
\times H_n\to \T$ is countable, where each group $H_j$ is identified algebraically 
with the corresponding subgroup $p_{\alpha_j}(G)$ of $\T$.}
\smallskip

Indeed, take an arbitrary element $(y_1,\ldots,y_n)\in H_1\times\cdots\times H_n$
and assume that $y_1\cdots y_n=1$.  It follows from condition (iv) in the proof of Theorem~\ref{Th:Ans} that for every $j\leq n$, the element $y_j\in H_j=p_{\alpha_j}(G)
=f_{\alpha_j}(A(X))$ can be written in the form $y_j=t_j\cdot z_j$, where 
$t_j\in \hull{f_{\alpha_j}(Y_{\alpha_j})}$ and $z_j\in \hull{E_{\alpha_j}}$. Therefore 
we have the equality $z_1\cdots z_n=t_1^{-1}\cdots t_n^{-1}\in\hull{Y}$, where 
$Y=\bigcup_{j=1}^n f_{\alpha_j}(Y_{\alpha_j})$. Since each set $Y_{\alpha_j}$ 
is countable, the group $\hull{Y}$ is countable as well. Further, since the set 
$E\subset\T$ is independent and the subsets $E_{\alpha_1},\ldots,E_{\alpha_n}$ 
of $E$ are pairwise disjoint, the products $z_1\cdots z_n$ and $z_1^\prime\cdots 
z_n^\prime$ are distinct provided $(z_1,\ldots,z_n)$ and $(z_1^\prime,\ldots,
z_n^\prime)$ are distinct elements of $\hull{E_1}\times\cdots\times\hull{E_n}$. 
Therefore it follows from $|\hull{Y}|\leq\omega$ that there exist at most countably 
many $n$-tuples $(z_1,\ldots,z_n)\in \hull{E_1}\times\cdots\times\hull{E_n}$ with 
$z_1\cdots z_n\in \hull{Y}$. Finally, since the group $\hull{Y}$ is countable, 
we conclude that the kernel of $P$ is countable. This proves Claim~3.

To finish the proof of the theorem we argue as follows. We know that for every
$j\leq n$, all countable subgroups of $H_j$ are $h$-embedded. By Lemma~\ref{Le:FP},
the countable subgroups of $H_1\times\cdots\times H_n$ are also $h$-embedded.
Hence every countable subgroup of $H_1\times\cdots \times H_n$ is closed (see
Lemma~\ref{Le:ACDT}), and the same holds for its subgroup $S$. For every $j\leq n$, 
let $\pi_j$ be the projection of $\T_{(\alpha_1)}\times\cdots\times \T_{(\alpha_n)}$ 
to the factor $\T_{(\alpha_j)}$. It follows from the definition of $\lambda^\ast$ that 
$\pi_j\circ\lambda^\ast = \lambda_j^\ast\circ\pi_j^k$. Therefore 
$\pi_j(S) = \pi_j(\lambda^\ast(p_A(G)^k))  = \lambda_j^\ast(H_j^k)=v_j(H_j)$ 
and the latter group is uncountable. Hence $|S|>\omega$. By Claim~3, the kernel 
of the homomorphism $P$ restricted to $H_1\times\cdots\times H_n$ is countable. 
Therefore the group $H^\ast=P^\ast(S)$ is uncountable as well.

By Lemma~\ref{Le:El}, the $\chi^\ast$-quotient topology of $H^\ast$, i.e.~the original
topology of $H^\ast$ is finer than the $P^\ast$-quotient topology of $H^\ast$ denoted 
by $\tau^\ast$. Since all countable subgroups of $S$ are closed, it follows that the 
group $(H^\ast,\tau^\ast)$ has the same property. Hence all countable subgroups 
of $H^\ast$ are closed as well. Since the group $H^\ast$ is uncountable it cannot 
be separable. 
\end{proof}

We have established in the proof of Theorem~\ref{Th:DisG} that all countable subgroups
of the group $H^\ast=\chi^\ast(G^k)$ are closed provided that $H^\ast$ carries the
$\chi^\ast$-quotient topology. Since $\chi^\ast$ is a continuous character on $G^k$,
this prompts the following question:

\begin{problem}\label{Prob:0}
Let $G$ be the group constructed in Theorem~\ref{Th:Ans} and $H$ a quotient
group of $G^k$, for some integer $k\geq 1$. Are all countable subgroups of $H$ 
$h$-embedded or closed?
\end{problem}

Finally we extend Theorem~\ref{Th:DisG} to arbitrary powers of the group $G$.

\begin{thm}\label{Th:ArP}
Let $G\subset \T^\cont$ be the group constructed in Theorem~\ref{Th:Ans}
and $\tau\geq 1$ be a cardinal. Then every quotient group of $G^\tau$ is either 
trivial or non-separable.
\end{thm}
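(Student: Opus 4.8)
The plan is to reduce the arbitrary power to the finite power case already settled in Theorem~\ref{Th:DisG}, the bridge being the elementary fact that a continuous character of a product of circle groups depends on only finitely many coordinates. Concretely, I would start from an open continuous homomorphism $q\colon G^\tau\to H$ onto a topological group $H$ with $|H|>1$ and aim to produce, out of it, an open continuous image of $H$ that is visibly a nontrivial quotient of some finite power $G^k$, so that Theorem~\ref{Th:DisG} applies.

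First I would observe that, as a quotient of the precompact abelian group $G^\tau$, the group $H$ is again precompact and abelian; being nontrivial, it carries a nontrivial continuous character $\chi$. Then $\chi^\ast=\chi\circ q$ is a nontrivial continuous character of $G^\tau$, and I view $G^\tau$ as a dense subgroup of the compact group $(\T^\cont)^\tau=\T^{\cont\times\tau}$, density being inherited from the density of $G$ in $\T^\cont$ proved in Theorem~\ref{Th:Ans}. Extending $\chi^\ast$ to a continuous character $\psi$ of $\T^{\cont\times\tau}$ (via \cite[Lemma~2.2]{GTBH}) and applying Lemma~\ref{Le:char}, I get that $\psi$ is a finite product of coordinate evaluations; the coordinates involved lie in finitely many of the $\tau$ factors, say those indexed by a finite set $F\subset\tau$ with $|F|=k$. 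Hence $\psi$, and therefore $\chi^\ast$, factors as $\chi^\ast=\bar\chi\circ\pi_F$, where $\pi_F\colon G^\tau\to G^F\cong G^k$ is the projection onto the subproduct (which is open, continuous and surjective, since the projection of a product of topological groups onto a subproduct is open) and $\bar\chi$ is a nontrivial continuous character of $G^k$.

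Now I would equip the abstract group $H^\ast=\chi^\ast(G^\tau)=\bar\chi(G^k)$ with the $\chi^\ast$-quotient topology and read it two ways using Lemma~\ref{Le:El}. On the one hand, applying Lemma~\ref{Le:El} to $\pi_F$ (open), $\bar\chi$, and $\chi^\ast=\bar\chi\circ\pi_F$, the $\chi^\ast$-quotient topology on $H^\ast$ coincides with the $\bar\chi$-quotient topology; with that topology $H^\ast$ is exactly the quotient group $G^k/\ker\bar\chi$, a nontrivial quotient of $G^k$, hence non-separable by Theorem~\ref{Th:DisG}. On the other hand, applying Lemma~\ref{Le:El} to $q$ (open), $\chi$, and $\chi^\ast=\chi\circ q$, the same $\chi^\ast$-quotient topology on $H^\ast$ coincides with the $\chi$-quotient topology, exhibiting $H^\ast$ as an open continuous image of $H$. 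Since a continuous image of a separable group is separable, the non-separability of $H^\ast$ forces $H$ to be non-separable, which is the desired conclusion.

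The routine parts (precompactness and character extension, openness of subproduct projections, the descent in Lemma~\ref{Le:char}) are all in hand from the earlier results. The one point demanding care is the bookkeeping of the three quotient topologies on $H^\ast$: the argument works only because \emph{both} $q$ and $\pi_F$ are open, so that Lemma~\ref{Le:El} yields genuine coincidences of topologies rather than mere comparisons of fineness, letting me identify the single group $(H^\ast,\tau_{H^\ast}^{\chi^\ast})$ simultaneously as a quotient of $G^k$ and as a quotient of $H$. I expect this matching to be the main obstacle, and I would state it as the crux of the proof.
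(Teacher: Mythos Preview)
Your proposal is correct and follows essentially the same route as the paper's proof: pass to a nontrivial character, extend it to $\T^{\cont\times\tau}$, use finite support to factor through a finite subproduct $G^F\cong G^k$, and then invoke Theorem~\ref{Th:DisG}. The only cosmetic differences are that the paper also projects onto a finite set $A\subset\cont$ (factoring through $K^B$ with $K=p_A(G)$, an unnecessary intermediate step you rightly omit) and that the paper uses only the \lq\lq{finer than}\rq\rq\ half of Lemma~\ref{Le:El} for the $G^B$ side, whereas you invoke the equality (valid since $\pi_F$ is open); both variants close the argument.
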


\begin{proof}
We start as in the proof of Theorem~\ref{Th:DisG}. Let $\varphi\colon G^\tau\to H$ be
an open continuous homomorphism onto a non-trivial Hausdorff topological group $H$. 
Then the group $H$ is precompact and abelian. Hence there exists a continuous character
$\chi$ on $H$ distinct from the trivial one, so $\chi^\ast=\chi\circ\varphi$ is a non-trivial
continuous character on $G^\tau$. We consider the group $H^\ast=\chi^\ast(G^\tau)$
endowed with the $\chi^\ast$-quotient topology. Since $H^\ast$ is a continuous 
homomorphic image of $H$, it suffices to verify that $H^\ast$ is not separable. 

The group $G^\tau$ is a dense subgroup of the compact group $(\T^\cont)^\tau\cong \T^{\cont\times\tau}$. Let $\psi$ be a continuous character on $\T^{\cont\times\tau}$ 
extending $\chi^\ast$. By Lemma~\ref{Le:char}, the character $\psi$ depends on at 
most finitely many coordinates. In other words, we can find finite nonempty sets 
$A\subset \cont$ and $B\subset\tau$ and a continuous character $\psi^\ast$ on the 
group $\T^{A\times B}\cong (\T^A)^B$ such that $\psi=\psi^\ast\circ p_{A,B}$, where 
$p_{A,B}$ is the projection of $\T^{\cont\times\tau}$ onto $\T^{A\times B}$. Let 
the subgroup $K=p_A(G)$ of $\T^A$ carry the quotient topology with respect to
the homomorphism $p_A\res G$, where $p_A\colon \T^\cont\to \T^A$ is the projection. 
Let also $\psi^\ast_B$ be the restriction of $\psi^\ast$ to $K^B=p_{A,B}(G^\tau)$. 
Then $\chi^\ast=\psi^\ast_B \circ p_{A,B}\hskip-1.5pt\res G^\tau$. Further, we can 
represent the restriction $p=p_{A,B}\hskip-1.5pt\res G^\tau$ as the composition of 
the projection $\pi_B\colon G^\tau\to G^B$ and the homomorphism $(p_A)^B\colon 
G^B\to K^B$, which makes the following diagram to commute.
\[
\xymatrix{G^B  \ar@{>}[dr]_{(p_A)^B}   & G^\tau  \ar@{>}[l]_{\pi_B}   
\ar@{>}[dr]^{\chi^\ast} \ar@{>}[d]_{p} &  \\
& K^B \ar@{>}[r]^{\psi_B^\ast}  &   H^\ast  } 
\]

Since the mapping $p$ is open, all homomorphisms in the diagram are continuous.
Clearly $f=\psi_B^\ast\circ (p_A)^B$ is a continuous homomorphism of $G^B$
onto $H^\ast$. Since $\chi^\ast=f\circ\pi_B$, it follows from Lemma~\ref{Le:El}
that the $\chi^\ast$-quotient topology on $H^\ast$ is finer than the $f$-quotient
topology on $H^\ast$, say, $\tau_f$. Further, as the set $B$ is finite, we can apply Theorem~\ref{Th:DisG} to the continuous homomorphism $f\colon G\to H^\ast$ 
and deduce that the group $(H^\ast,\tau_f)$ is not separable. Therefore $H^\ast$
with the $\chi^\ast$-quotient topology is not separable either. This completes the
proof.
\end{proof}

Finally, we solve Problem~\ref{SQGsigma}.

\begin{thm}\label{Th:CUD}
For every cardinal $\tau\geq\cont$, there exists a precompact topological abelian 
group $H$ with the following properties:
\begin{enumerate}
\item[{\rm (a)}] $w(H)=\tau$;
\item[{\rm (b)}] $H=\bigcup_{n\in\omega} H_n$, where $H_0\subset H_1\subset H_2\subset \cdots$ 
                       are proper closed subgroups of $H$;
\item[{\rm (c)}] every quotient group of $H$ is either trivial or non-separable.
\end{enumerate}
\end{thm}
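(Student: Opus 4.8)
The plan is to realize the required group $H$ as a countable direct sum of copies of a fixed power of the group $G$ produced in Theorem~\ref{Th:Ans}, so that the filtration demanded in (b) is built into the construction while property (c) is inherited, via a reduction, from Theorem~\ref{Th:ArP}. Concretely, for a fixed $\tau\geq\cont$ I would set $H=\bigoplus_{n\in\omega} G_n$, where each $G_n$ is a copy of $G^\tau$, endowed with the topology inherited from the product $(G^\tau)^\omega$. Since $G\subset\T^\cont$, this realizes $H$ as a subgroup of $\T^{\cont\cdot\tau\cdot\omega}=\T^\tau$, so $H$ is precompact and abelian. Property (b) is then immediate: writing $H_n=\bigoplus_{k\leq n} G_k$, the subgroups $H_n$ form a strictly increasing chain of proper subgroups with $\bigcup_n H_n=H$, and each $H_n=\prod_{k\leq n} G_k\times\prod_{k>n}\{0\}$ is closed in $(G^\tau)^\omega$, hence in $H$, because singletons are closed in the Hausdorff group $G$. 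For (a) I would note that finitely supported elements are dense in a product, so $H$ is dense in $(G^\tau)^\omega$ and hence in $\T^\tau$; since a dense subgroup of a topological group has the same weight as the ambient group and $w(\T^\tau)=\tau$ for $\tau\geq\cont$, we obtain $w(H)=\tau$.

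The substance of the argument is (c), and here the naive route fails: a continuous character of a direct sum depends on only finitely many summands, so \emph{every} non-trivial continuous character of $H$ has an uncountable kernel, and one cannot invoke the criterion of Lemma~\ref{Le:M}(b) directly. Instead I would imitate the device from the proof of Theorem~\ref{Th:ArP}. Let $\varphi\colon H\to L$ be an open continuous homomorphism onto a non-trivial group $L$; then $L$ is precompact abelian, so it carries a non-trivial continuous character $\chi$. Set $\chi^\ast=\chi\circ\varphi$ and let $L^\ast=\chi^\ast(H)$ carry the $\chi^\ast$-quotient topology. Applying Lemma~\ref{Le:El} with the open map $\varphi$ shows that $L^\ast$ is a continuous homomorphic image of $L$, so it suffices to prove that $L^\ast$ is non-separable.

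The key step is that $\chi^\ast$ factors through finitely many summands. Extending $\chi^\ast$ to a continuous character $\psi$ of the compact group $\T^\tau$ and applying Lemma~\ref{Le:char}, we see that $\psi$ depends on only finitely many coordinates, hence involves only finitely many blocks $n_1<\cdots<n_r$. Consequently $\chi^\ast=\eta\circ\rho$, where $\rho\colon H\to G_{n_1}\times\cdots\times G_{n_r}=(G^\tau)^r$ is the projection and $\eta$ is a non-trivial continuous character on $(G^\tau)^r$. The map $\rho$ is open and surjective (it is the restriction to the dense direct sum of the open projection of $(G^\tau)^\omega$ onto finitely many factors), so Lemma~\ref{Le:El} again identifies the $\chi^\ast$-quotient topology on $L^\ast$ with the $\eta$-quotient topology on $\eta\big((G^\tau)^r\big)$; that is, $L^\ast$ is a non-trivial quotient group of $(G^\tau)^r$. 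Since $\tau\cdot r=\tau$, we have $(G^\tau)^r\cong G^\tau$ as topological groups, whence Theorem~\ref{Th:ArP} applies and guarantees that this quotient is non-separable. Therefore $L^\ast$, and with it $L$, is non-separable, which establishes (c).

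The main obstacle is exactly this last reduction. The filtration forces a direct-sum structure, but that structure destroys the "countable kernel" property that underlies Lemma~\ref{Le:M}; the way around it is to pass to $L^\ast$ with its quotient topology and collapse to a finite power $(G^\tau)^r\cong G^\tau$, where Theorem~\ref{Th:ArP} is available. The remaining points — openness of $\rho$, density of the direct sum in the product, closedness of the subgroups $H_n$, and the weight computation — are routine and I would dispatch them with brief remarks.
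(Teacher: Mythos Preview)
Your construction of $H$ as the direct sum $\bigoplus_{n\in\omega} G^\tau$ inside the product $(G^\tau)^\omega$ is exactly the paper's construction, and your verification of (a) and (b) matches the paper's. Your argument for (c) is correct but differs from the paper's. You pick a non-trivial character $\chi$ on $L$, pull it back to $\chi^\ast$ on $H$, extend to $\T^\tau$, and use the finite support of the extended character to factor $\chi^\ast$ through an open projection $\rho\colon H\to (G^\tau)^r$; two applications of Lemma~\ref{Le:El} then identify $L^\ast$ (with the $\chi^\ast$-quotient topology) simultaneously as a continuous image of $L$ and as a non-trivial quotient of $(G^\tau)^r\cong G^\tau$, so Theorem~\ref{Th:ArP} finishes. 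The paper instead bypasses characters entirely: it lets $N=\ker\varphi$, takes the closure $\overline{N}$ in the full product $\Pi=(G^\tau)^\omega\cong G^\tau$, and observes (via \cite[Theorem~1.5.16]{AT}) that $L\cong H/N$ sits as a dense subgroup of $\Pi/\overline{N}$; since $\Pi/\overline{N}$ is a non-trivial quotient of $G^\tau$, Theorem~\ref{Th:ArP} makes it non-separable, and a dense subgroup of a non-separable group is non-separable. The paper's route is shorter and more conceptual---one closure and one density argument---whereas yours re-runs the finite-support reduction already internal to the proof of Theorem~\ref{Th:ArP}; on the other hand, your argument is entirely self-contained within the lemmas of the section and does not need the extension result from \cite{AT}. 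One small point: your claim that $\rho$ is open deserves a sentence---it follows because for finite $F$ one has a topological splitting $H\cong \prod_{n\in F} G_n \times \bigoplus_{n\notin F} G_n$, so $\rho$ is a product projection.
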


\begin{proof}
Let $\tau$ be a cardinal with $\tau\geq\cont$ and $G$  the precompact abelian
group as in Theorem~\ref{Th:Ans}. Then $G$ is a dense subgroup of $\T^\cont$, so
$w(G)=\cont$. Clearly the group $K=G^\tau$ is precompact, abelian, has weight $\tau$, 
and every non-trivial quotient of $K$ is not separable (see Theorem~\ref{Th:ArP}). 

We denote by $\Pi$ the group $K^\omega$ endowed with the Tychonoff
product topology. For every $n\in\omega$, let $p_n$ be the projection of $\Pi$
to the $n$th factor $K_{(n)}$. Given an element $x\in \Pi$, we denote by
$\supp(x)$ the set $\{n\in\omega: p_n(x)\neq e\}$, where $e$ is the identity
element of $K$. We also put
$$
H=\{x\in \Pi: |\supp(x)|<\omega\}.
$$
It is clear that $H$ is a dense subgroup of $\Pi$ and $w(H)=w(\Pi)=w(K)=\tau$.
For every $n\in\omega$, denote by $H_n$ the subgroup of $H$ which consists
of all $x\in\Pi$ with $\supp(x)\subset\{0,1,\ldots,n\}$. It is easy to see that 
$H_n\cong K^{n+1}$ is closed in $H$, $H_n\subset H_{n+1}$ for each 
$n\in\omega$, and $H=\bigcup_{n\in\omega} H_n$. 

Let us show that every non-trivial quotient of $H$ is not separable. Consider an 
open continuous homomorphism $f\colon H\to L$ onto a non-trivial topological group
$L$. Let $N$ be the kernel of $f$ and $\overline{N}$ the closure of $N$ in $\Pi$. 
Denote by $\pi$ the quotient homomorphism of $\Pi$ onto $\Pi/\overline{N}$. Since 
$N$ is dense in $\overline{N}$, the dense subgroup $\pi(H)$ of $\Pi/\overline{N}$ 
is isomorphic as a topological group to  $L\cong H/N$ (see \cite[Theorem~1.5.16]{AT}). 
The group $\Pi$ is  isomorphic as a topological group   to $(G^\tau)^\omega\cong G^\tau$, so Theorem~\ref{Th:ArP}
 implies that the non-trivial group $\Pi/\overline{N}$ is not 
separable. Thus neither is the group $L\cong H/N$ which is isomorphic as a topological group to the dense subgroup $\pi(H)$ of $\Pi/\overline{N}$.
\end{proof}

\begin{problem}\label{Prob:2}
Does there exist a precompact abelian group $G$ as in Theorem~\ref{Th:Ans} 
which has one of the following additional properties: 
\begin{enumerate}
\item[{\rm (a)}] $G$ is connected;
\item[{\rm (b)}] $G$ is  Baire;
\item[{\rm (c)}] $G$ is reflexive?
\end{enumerate}
\end{problem}

\begin{problem}\label{Prob:3}
Is it true that every precompact reflexive topological group
has the Baire property? 
\end{problem}


\section{Conclusion}\label{Conclusion} 

\noindent Throughout this paper we have listed many problems some of which we have answered completely, others partially and some we have not addressed here. For clarity we state the status of each problem in a table and for easy reference purposes we include a table listing properties. All topological groups in the left column of the first table are assumed to be infinite.

\newpage

$$
\begin{tabular}{|P{3cm}|P{2.0cm}|P{1.8cm}|P{1.8cm}|P{2.0cm}|}
 \hline
{\phantom{junk}\bf Groups} & {\bf Non-trivial Separable Quotient} & {\bf Infinite Separable Quotient} & {\bf Metrizable Separable Quotient} & {\bf Infinite Metrizable Separable Quotient} \\
 \hline
non-totally disconnected & No & No & No & No\\
 \hline
non-discrete $G_{\sigma}$ & No & No & No & No\\
 \hline
reflexive & ? & ? & ? & ?\\
 \hline
 locally compact abelian & Yes & Yes & Yes & Yes\\
 \hline
non-totally disconnected locally compact &\phantom{longstuff} ? & \phantom{longstuff} ? &\phantom{longstuff} ? &\phantom{longstuff} ?\\
 \hline
compact & Yes & Yes & Yes & Yes\\
 \hline
non-discrete $\sigma$-compact & Yes & Yes & No & No\\
 \hline
pseudocompact & Yes & Yes & Yes & Yes\\
 \hline
non-totally disconnected proto-Lie &\phantom{longstuff} Yes &\phantom{longstuff} Yes &\phantom{longstuff} Yes &\phantom{longstuff} Yes\\
 \hline
$\sigma$-compact pro-Lie & Yes & Yes & Yes & Yes\\
 \hline
abelian pro-Lie & Yes & Yes & Yes & Yes\\
 \hline
$\sigma$-compact locally compact & Yes & Yes & Yes & Yes\\
 \hline
almost connected locally compact & Yes & Yes & Yes & Yes\\
 \hline
precompact & No & No & No & No\\
 \hline
$\R$-factorizable & No & No & No & No\\
 \hline
\end{tabular} $$

\newpage
$$\begin{tabular}{|P{2cm}|P{1.6cm}|P{4.8cm}|}
 \hline
{\bf Problem} &{\bf Answer} &{\bf Theorems}  \\
 \hline
\ref{SQProblem} & Partial & \ref{SchauderBasis}, \ref{SaxonWilansky},  \ref{AmirLindenstrauss}, \ref{reflexiveBanach}, \ref{separablesubspaceofdual}, \ref{Argyros1} \\
 \hline
\ref{SQSchauder} & Partial & \ref{JohnsonRosenthal}, \ref{SchauderBasis},  \ref{SaxonWilansky}, \ref{AmirLindenstrauss}, \ref{reflexiveBanach}, \ref{separablesubspaceofdual}, \ref{Argyros1}\\
 \hline
\ref{UnionDenseLinearSubspaces} & Partial & \ref{SaxonWilansky}, \ref{AmirLindenstrauss}, \ref{reflexiveBanach}, \ref{separablesubspaceofdual}, \ref{Argyros1}\\
 \hline
 \ref{barrelled} & Partial& \ref{SaxonWilansky}, \ref{AmirLindenstrauss}, \ref{reflexiveBanach}, \ref{separablesubspaceofdual}, \ref{Argyros1}  \\
 \hline
 \ref{SQlcs} & No & \ref{SQFrechet}, \ref{strict(LF)}, \ref{barrelledlcs}, \ref{C(X)}, \ref{C(X)duals} \\
 \hline
  \ref{SQTopGps} &No& \ref{Th:Ans} \\
 \hline
    \ref{SQInfiniteTopGps} &No & \ref{Th:Ans} \\
 \hline
  \ref{SQMetrizableTopGps} &No & \ref{Th:Ans} \\
 \hline
   \ref{SQInfiniteMetrizableTopGps} &No & \ref{Th:Ans} \\
 \hline
    \ref{SQGsigma} &No & \ref{Th:CUD} \\
 \hline
     \ref{SQReflexiveTopGp} & partial & \ref{SaxonWilansky},  \ref{AmirLindenstrauss}, \ref{reflexiveBanach}, \ref{separablesubspaceofdual}, \ref{Argyros1}, \ref{SQLCATheorem}\\
 \hline
     \ref{SQLCA} &Yes & \ref{SQLCATheorem} \\
 \hline
     \ref{SQBanachTopGp} &partial &  \ref{SaxonWilansky}, \ref{AmirLindenstrauss}, 
     \ref{reflexiveBanach}, \ref{separablesubspaceofdual}, \ref{Argyros1}\\
 \hline
     \ref{SQLC} &partial& \ref{SQTheoremCompact}, \ref{SQLCATheorem}, 
     \ref{SQSigma-CompactLCTheorem}, \ref{SQAlmostConnectedTheorem}, \ref{Cor:sigma-c} \\
 \hline
     \ref{SQCompact} &Yes& \ref{SQTheoremCompact}, \ref{Cor:sigma-c}, \ref{Th:Pseu} \\
 \hline
 \ref{SQsigma-compact}(i)\hskip-0.1pt \&(ii)& Yes&\ref{Cor:sigma-c} \\
 \hline
  \ref{SQsigma-compact}(iii)\hskip-0.1pt \&(iv)& No&\ref{Ex:Qs} \\
 \hline
 \ref{SQprecompact} & No & \ref{Th:Ans}\\
 \hline
 \ref{SQpseudocompact} &Yes &\ref{Th:Pseu}\\
 \hline
 \ref{SQRfactorizable} & No &\ref{coro:Rfac}\\
 \hline
 \ref{Prob:0} & ? &{\bf ---}\\
 \hline
 \ref{Prob:2} & ? &{\bf ---}\\
 \hline
 \ref{Prob:3} & ? &{\bf---}\\
 \hline
 \end{tabular}$$



\begin{thebibliography}{9}

\bibitem{AmirLindenstrauss} D.~Amir and J.~Lindenstrauss, 
\newblock The structure of weakly compact sets in Banach spaces,
\newblock \textit{Ann. of Math.} \textbf{88} (1968), 33--46.



\bibitem{Argyros1} S.\,A.~Argyros, P.~Dodos, V.~Kanellopoulos,
\newblock Unconditional families in Banach spaces,
\newblock \textit{Math. Ann.} \textbf{341} (2008), 15--38.


\bibitem{AT} A.\,V.~Arhangel'skii and M.\,G.~Tkachenko,
\newblock \textit{Topological Groups and Related Structures}, 
\newblock Atlantis Series in Mathematics, Vol.~I, Atlantis Press 
and World Scientific, Paris--Amsterdam, 2008.

\bibitem{ACDT} S.~Ardanza-Trevijano, M.\,J.~Chasco, X.~Dom\'{\i}nguez
and M.~Tkachenko,
\newblock Precompact noncompact reflexive abelian groups,  
\newblock \textit{Forum Math.} \textbf{24:2} (2012), 289--302.

\bibitem{BessagaPelczynski} Czes\l aw Bessaga and Aleksander Pe\l czynski,
\newblock Selected topics in infinite-dimensional topology,
\newblock PWN -- Polish Scientific Publishers, Warsaw, Poland, 1975.


\bibitem{Capracepreprint} Pierre-Emmanuel Caprace,
\newblock Non-discrete simple locally compact groups, preprint, \newline
\newblock \url{https://perso.uclouvain.be/pierre-emmanuel.caprace/papers_pdf/Simple7ECM.pdf}.

\bibitem{CoRo} Wistar W.~Comfort and Kenneth A.~Ross,
\newblock Pseudocompactness and uniform continuity in topological groups,
\newblock \textit{Pacific J. Math.} \textbf{16} (1966), 483--496.

\bibitem{Eidelheit} M.~Eidelheit,
\newblock Zur Theorie der Systeme linearer Gleichungen,
\newblock \textit{Studia Math.} \textbf{6} (1936), 130--148.

\bibitem{Enflo} Per~Enflo,
\newblock A counterexample to the approximation property in Banach spaces, 
\newblock \textit{Acta Math.} \textbf{130} (1973), 309--317.

\bibitem{GTBH} J.~Galindo, M.~Tkachenko, M.~Bruguera and C.~Hern\'andez,
\newblock Reflexivity in precompact groups and extensions,
\newblock \textit{Topol. Appl.} \textbf{163} (2014), 112--127.


\bibitem{HewittRoss} Edwin Hewitt and Kenneth Ross,
\newblock Abstract harmonic analysis I, 2nd edition,
\newblock Springer-Verlag, Berlin, Heidelberg, 1994.

\bibitem{PROBOOK} Karl H. Hofmann and Sidney A. Morris,
\newblock The Lie Theory of Connected Pro-Lie Groups,
\newblock European Mathematical Society, Zurich, 2007.

\bibitem{COMPBOOK} Karl H.~Hofmann and Sidney A.~Morris,
\newblock The structure of compact groups, 3rd edition,
\newblock Walter de Gruyter GmbH, Boston-Berlin, 2013.

\bibitem{Hu} Zhiguo Hu,
\newblock A generalized Kakutani-Kodaira theorem,
\newblock \textit{Proc. Amer. Math. Soc.} \textbf{133} (2005), 3437--3440.

\bibitem{JohnsonRosenthal} W.~Johnson and H.~Rosenthal,
\newblock On w$^*$-basic sequences and their applications to the study of Banach spaces,
\newblock \textit{Studia Math.} \textbf{43} (1972), 77-92.


\bibitem{KakolSaxon} Jerzy K{\c{a}}kol and Stephen A.~Saxon,
\newblock Separable quotients in $ C_{c}( X)$, $ C_{p}( X) $, and their duals,
\newblock \textit{Proc. Amer. Math. Soc.} \textbf{145} (2017), 3829--3841. 

\bibitem{KakolSaxonTodd}  Jerzy K{\c{a}}kol, Stephen A.~Saxon and Aaron Todd,
\newblock Barrelled spaces with(out) separable quotients,
\newblock \textit{Bull. Austral. Math. Soc.} \textbf{90} (2014), 295--303.


\bibitem{KakolSliwa}  Jerzy K{\c{a}}kol and Wieslaw Sliwa,
\newblock Remarks concerning the separable quotient problem,
\newblock \textit{Note di Matematica} \textbf{13} (1993), 277--282.



\bibitem{Koethe} G.~K\"othe,
\newblock Topological vector spaces,
\newblock Springer-Verlag, Berlin and Heidelberg, 1969. 


\bibitem{LMT(TAMS_2017)}  Arkady G.~Leiderman, Sidney A.~Morris, and Mikhail G.~Tkachenko,
\newblock Density character of subgroups of topological groups,  
\newblock \textit{Trans. Amer. Math. Soc.} \textbf{369} (2017), 5645--5664.

\bibitem{LT(TA_2017)} Arkady G.~Leiderman and Mikhail G.~Tkachenko,
\newblock Products of topological groups in which all closed subgroups are separable,
\newblock \url{https://arxiv.org/pdf/1701.00084.pdf}.


\bibitem{Mor77} S.\,A.~Morris, 
\newblock Pontryagin duality and the structure of locally compact 
abelian groups, 
\newblock \textit{London Math. Soc. Lecture Notes Series} \textbf{29}, 
Cambridge Univ. Press, 1977.

\bibitem{Mujica} Jorge Mujica,
\newblock Separable quotients of Banach spaces,
\newblock \textit{Revista Matem\'atica de la Universidad Complutense de Madrid} 
\textbf{10} (1997), 299--330.


\bibitem{Pelczynski} A.~Pe\l czy\'nski,
\newblock Some problems on bases in Banach and Fr\'echet spaces,
\newblock \textit{Israel J. Math.} \textbf{2} (1964), 132--138.


\bibitem{Pon} L.\,S.~Pontryagin, 
\newblock \textit{Continuous groups}, third edition, ``Nauka'', Moscow, 1973.

\bibitem{Robertson} W.~Robertson,
\newblock On properly separable quotients of strict (LF) spaces,
\newblock \textit{J. Austral. Math. Soc. (Series A)} \textbf{47} (1989), 307--312.

\bibitem{Robinson} Derek J.~Robinson, 
\newblock A course in the theory of groups,  2nd edition,
\newblock Springer-Verlag, New York, 1996.



\bibitem{SaxonN} Stephen A.~Saxon and P.\,P.~Narayanaswami,
\newblock Metrizable (LF)-spaces, (db)-Spaces, and the separable quotient problem,
\newblock \textit{Bull. Austral. Math. Soc.} \textbf{23} (1981), 65--80.


\bibitem{SaxonWilansky} S.~Saxon and A.~Wilansky,
\newblock The equivalence of some Banach space problems,
\newblock \textit{Colloq. Math.} \textbf{37} (1977), 217--226.

\bibitem{Scott} W.\,R.~Scott,
\newblock On infinite groups,
\newblock \textit{Pacific J. Math.} \textbf{5} (1955), 589--598.


\bibitem{Mauldin} The Scottish Book,
\newblock (\textit{edited by R. Daniel Mauldin}), Birkh\"auser, London, 2015.


\bibitem{Tk88} M.\,G.~Tkachenko,
\newblock Compactness type properties in topological groups,
\newblock \textit{Czech. Math. J.} \textbf{38} (1988), 324--341.


\bibitem{Willis} George A.~Willis,
\newblock Compact open subgroups in simple totally disconnected groups,
\newblock \textit{J. Algebra} \textbf{312} (2007), 405--417.

\end{thebibliography}
\end{document}